\author{Josse van Dobben de Bruyn}
\title{The Archimedean order unitization of seminormed~ordered~vector~spaces}
\date{13 February 2025}
\newcommand{\myaddrA}[1]{\gdef\my@addressA{\par\textsc{#1}}}
\newcommand{\myaddrB}[1]{\gdef\my@addressB{\par\textsc{#1}}}
\newcommand{\mycuraddr}[1]{\gdef\my@curaddr{\par\textit{Current address:} #1.}}
\newcommand{\myemail}[1]{\gdef\my@email{\par\textit{E-mail address:} \texttt{\href{mailto:#1}{#1}.}}}
\newcommand{\mysubjclass}[2][2010]{\gdef\my@subjclass{#1 \textit{Mathematics Subject Classification}. #2.}}
\newcommand{\mykeywords}[1]{\gdef\my@keywords{\textit{Key words and phrases}. #1.}}
\newcommand{\mymaketitle}{%
	\let\@oldthanks\@thanks%
	\gdef\@thanks{\@oldthanks\footnotetext{\my@subjclass}\footnotetext{\my@keywords}}%
	\maketitle
}
\def\mylinkcolor{blue!55!black}
\newcommand*{\mysecref}[1]{\hyperref[#1]{\S\ref*{#1}}}
\newcommand*{\myautoref}[2]{\hyperref[#2]{\autoref*{#1}\ref*{#2}}}  
\newcommand{\myref}[1]{\textit{\ref{#1}}}
\newcommand{\myeqautoref}[1]{\hyperref[#1]{Equation~\eqref{#1}}}
\newcommand{\hair}{\ifmmode\mskip1mu\else\kern0.08em\fi}
\newcommand{\exampleqed}{\ensuremath{\scriptstyle\triangle}}
\numberwithin{equation}{section}
\declaretheorem[style=definition,sibling=equation]{definition}
\declaretheorem[style=definition,sibling=definition]{construction}
\declaretheorem[style=remark,qed=\exampleqed,sibling=definition]{example}
\declaretheorem[style=remark,sibling=definition]{remark}
\declaretheorem[style=plain,sibling=definition]{theorem}
\declaretheorem[style=plain,sibling=definition]{lemma}
\declaretheorem[style=plain,sibling=definition]{proposition}
\declaretheorem[style=plain,sibling=definition]{corollary}
\declaretheorem[style=plain,sibling=definition]{claim}
\newcommand{\N}{\ensuremath{\mathbb{N}}}
\newcommand{\R}{\ensuremath{\mathbb{R}}}
\providecommand{\C}{}
\renewcommand{\C}{\ensuremath{\mathbb{C}}}
\DeclareSymbolFont{bbold}{U}{bbold}{m}{n}
\DeclareSymbolFontAlphabet{\mathbbold}{bbold}
\newcommand{\one}{\ensuremath{\mathbbold{1}}}
\newcommand{\mywedge}{\mathcal K}
\newcommand{\topdual}{'}
\newcommand{\algdual}{^*}
\newcommand{\weakstar}{{w*}}
\newcommand{\lineal}{\operatorname{lin.\!space}}
\DeclareMathOperator{\fh}{fh}
\DeclareMathOperator{\spn}{span}
\DeclareMathOperator{\ex}{ex}
\newcommand{\sa}{{\mkern2mu\operatorname{sa}}}
\let\sc\relax
\newcommand{\sc}{{\mkern2mu\operatorname{sc}}}
\DeclareMathOperator{\id}{id}
\DeclareMathOperator{\im}{im}
\newcommand{\lintop}{\mathfrak T}
\newcommand{\unitArch}[1]{\ensuremath{#1_{\operatorname{Arch},1}}}
\newcommand{\Arch}[1]{\ensuremath{#1_{\operatorname{Arch}}}}
\newcommand{\unit}[1]{\ensuremath{#1_{1}}}
\newcommand{\mUnitArch}[1]{\ensuremath{\tilde{#1}}}
\newcommand{\mArch}[1]{\ensuremath{\tilde{#1}}}
\newcommand{\mUnit}[1]{\ensuremath{#1_{1}}}
\newcommand{\pu}[1]{\ensuremath{#1^\sharp}}
\newcommand{\cosegment}[3][]{\ell_{#1[#2,#3#1)}}
\def\Hom{L}
\begin{document}
\mymaketitle
\begin{abstract}
	In this paper, we describe a way of turning a seminormed preordered vector space into an Archimedean order unit space.
	We show that this construction satisfies a universal property similar to that of the Archimedeanization of Paulsen and Tomforde, and we give a number of applications of our result in ordered vector spaces and in matrix ordered operator spaces.
	In ordered vector spaces, we use our our Archimedean order unitzation to shed new light on normality criteria for seminorms.
	In matrix ordered operator spaces, we prove several new results about Werner's ``partial unitization'': we give a simplified ``internal'' description of the positive cone of Werner's partial unitization, and we prove a necessary and sufficient condition for the embedding of a matrix ordered operator space in its partial unitization to be a complete isomorphism.
	This last result was already announced in Werner's 2002 paper, but to our knowledge no proof exists in the literature.
\end{abstract}

\section{Introduction}
Among the most well-behaved ordered vector spaces are the Archimedean order unit (AOU) spaces.
When equipped with the order unit norm, these spaces have many favourable properties, including a famous representation theorem by Kadison \cite[Theorem 2.1]{Kadison-representation} and, in many cases, automatic continuity of positive linear operators to or from these spaces (see \cite[Proposition~II.2.16]{Peressini}).
It is therefore useful to have ways of turning other types of spaces into an AOU space.

A non-Archimedean order unit space can be turned into an AOU space by using the \emph{Archimedeanization} of Paulsen and Tomforde \cite{Paulsen-Tomforde}, given by the following universal property.
\begin{theorem}[{Paulsen and Tomforde, \cite{Paulsen-Tomforde}}]
	\label{thm:intro:Paulsen-Tomforde}
	For every order unit space $E$, there is an AOU space $\unitArch{E}$ and a unital positive linear map $\phi : E \to \unitArch{E}$ with the following universal property:
	for every AOU space $F$ and every unital positive linear map $\psi : E \to F$, there is a unique unital positive linear map $\mUnitArch{\psi} : \unitArch{E} \to F$ such that $\psi = \mUnitArch{\psi} \circ \phi$.
	\[ \begin{tikzcd}[column sep={2.5em,between origins}]
		& E\arrow[dl,"\phi",swap]\arrow[dr,"\psi"] \\
		\unitArch{E}\arrow[rr,"\mUnitArch{\psi}",densely dashed] & & F
	\end{tikzcd} \]
	By the universal property, $\unitArch{E}$ is unique up to a unique unital order isomorphism.
\end{theorem}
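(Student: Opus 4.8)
The plan is to construct $\unitArch{E}$ explicitly as an ``Archimedeanized'' quotient of $E$ and then verify the universal property by a short diagram chase. Fix an order unit $e$ for $E$ and let $\|\cdot\|_e$ denote the associated order unit seminorm, $\|x\|_e = \inf\{r > 0 : -re \overleq x \overleq re\}$. First I would set
\[ N := \{\, x \in E : -re \overleq x \overleq re \text{ for all } r > 0 \,\} = \{\, x \in E : \|x\|_e = 0 \,\}, \]
which is a linear subspace, and take $\phi := q : E \to E/N$ to be the quotient map. On $E/N$ define the cone
\[ D := \bigl\{\, y \in E/N : y + r\,q(e) \in q(E^+) \text{ for every } r > 0 \,\bigr\}, \]
with distinguished element $q(e)$. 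The first block of work is to check that $(E/N, D, q(e))$ is an AOU space: that $D$ is a cone containing $q(E^+)$; that $q(e)$ is an order unit for $D$ (inherited from $e$); that $D$ is proper, i.e. $D \cap (-D) = \{0\}$ --- this is exactly where quotienting by $N$ is needed, since it forces an element that is simultaneously ``almost positive'' and ``almost negative'' to vanish; and finally that $\overleq_D$ is Archimedean, which amounts to showing that applying the defining operation twice yields nothing new, i.e. $y \in D$ whenever $y + r\,q(e) \in D$ for all $r > 0$. Once this is done, $\phi = q$ is by construction a unital positive linear map.

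For the universal property, let $F$ be an AOU space with order unit $f$ and let $\psi : E \to F$ be unital and positive. The key observation is that $N \subseteq \ker\psi$: if $x \in N$ then $re \pm x \in E^+$ for every $r > 0$, hence $rf \pm \psi(x) \in F^+$ for every $r > 0$, and Archimedeanity of $F$ forces $\psi(x) = 0$. Therefore $\psi$ descends to a well-defined linear map $\mUnitArch{\psi} : E/N \to F$ with $\mUnitArch{\psi} \circ q = \psi$, and this map is automatically unital. Positivity of $\mUnitArch{\psi}$ with respect to $D$ is the same argument once more: if $y = q(x) \in D$, then for every $r > 0$ we have $q(x) + r\,q(e) \in q(E^+)$, so $\psi(x) + rf \in F^+ + \psi(N) = F^+$; letting $r \downarrow 0$ and using Archimedeanity of $F$ gives $\psi(x) \in F^+$, i.e. $\mUnitArch{\psi}(y) \in F^+$. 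Uniqueness of $\mUnitArch{\psi}$ is immediate since $q$ is surjective, so $\mUnitArch{\psi}$ is determined on all of $E/N$. The closing assertion that $\unitArch{E}$ is unique up to a unique unital order isomorphism is then the standard uniqueness-of-universal-objects argument: two objects with this universal property admit mutually inverse comparison morphisms, and the uniqueness clause applied with $F = \unitArch{E}$ pins these down.

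I expect the main obstacle to be the first block --- verifying that $(E/N, D, q(e))$ genuinely is an AOU space, in particular properness of $D$ and Archimedeanity of the order. Both are elementary but must be set up carefully: properness uses that $N$ is precisely the kernel of $\|\cdot\|_e$, and Archimedeanity uses a short $\varepsilon$/$2$-splitting argument showing the defining operation is idempotent. (Alternatively, one may define $D$ as the closure of $q(E^+)$ in the order unit norm that $\|\cdot\|_e$ induces on $E/N$, which reduces this block to the routine fact that the norm closure of a cone with a norm-interior order unit is again a proper Archimedean cone; the universal-property part then proceeds verbatim as above, with ``$\psi$ is norm-continuous because it is positive and bounded on the order interval $[-e,e]$'' replacing the explicit $\varepsilon$-manipulations.) Everything after the construction is a direct diagram chase.
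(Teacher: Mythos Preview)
The paper does not prove this theorem; it is quoted from Paulsen--Tomforde and only accompanied by a one-line sketch (``take the closure of $E_+$ with respect to the order unit seminorm and divide out the lineality space''), with the explicit description of $\unitArch{E}$ recalled again in \S6.1. So there is no in-paper proof to compare against. That said, your proposal is correct and is exactly the Paulsen--Tomforde construction: your $N$ coincides with $\overline{E_+}\cap -\overline{E_+}$, and your cone $D$ coincides with $\{x+N:x\in\overline{E_+}\}$ (the $\varepsilon/2$ argument you mention shows the two descriptions agree). The verification of the universal property via $N\subseteq\ker\psi$ and Archimedeanity of $F$ is the standard one and goes through as you wrote it. Your alternative description of $D$ as the norm closure of $q(E^+)$ is precisely what the paper's one-line sketch says.
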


The Archimedeanization $\unitArch{E}$ can be constructed by taking the closure of $E_+$ with respect to the order unit seminorm and dividing out the lineality space of the resulting cone.
This construction turns an order unit space into an AOU space in a canonical way, but it does not apply to spaces without order units.

\pagebreak
Emelyanov \cite{Emelyanov} extended the Archimedeanization to spaces without an order unit.

\begin{theorem}[{Emelyanov, \cite{Emelyanov}}]
	\label{thm:intro:Emelyanov}
	For every ordered vector space $E$, there is an Archimedean ordered vector space $\Arch{E}$ and a positive linear map $\phi : E \to \Arch{E}$ with the following universal property:
	for every Archimedean ordered vector space $F$ and every positive linear map $\psi : E \to F$, there is a unique positive linear map $\mArch{\psi} : \Arch{E} \to F$ such that $\psi = \mArch{\psi} \circ \phi$.
	\[ \begin{tikzcd}[column sep={2.5em,between origins}]
		& E\arrow[dl,"\phi",swap]\arrow[dr,"\psi"] \\
		\Arch{E}\arrow[rr,"\mArch{\psi}",densely dashed] & & F
	\end{tikzcd} \]
	By the universal property, $\Arch{E}$ is unique up to a unique order isomorphism.
\end{theorem}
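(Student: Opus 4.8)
The plan is to realise $\Arch{E}$ as a quotient of $E$ by the \emph{Archimedean closure} of its positive cone. For a convex cone $D \subseteq E$, put
\[
  D^{(1)} := \{\, v \in E : \exists\, u \in D \text{ with } u + nv \in D \text{ for all } n \in \N \,\},
\]
and call $D$ \emph{Archimedean} when $D^{(1)} = D$ (one checks this is equivalent to the usual definition of an Archimedean order, in which $nx \le u$ for all $n$ forces $x \le 0$). Routine verifications show that $D \mapsto D^{(1)}$ takes convex cones to convex cones, is monotone, and satisfies $D \subseteq D^{(1)}$; that $E$ itself is an Archimedean cone; and that any intersection of Archimedean cones is Archimedean (pull the defining condition back through the intersection). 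Hence the family of Archimedean cones containing $E_+$ is nonempty and stable under intersection, so it has a least element $\overline{E_+}$. (Concretely, $\overline{E_+}$ arises from the transfinite iteration $D^{(0)} = E_+$, $D^{(\alpha+1)} = (D^{(\alpha)})^{(1)}$, $D^{(\lambda)} = \bigcup_{\alpha < \lambda} D^{(\alpha)}$, which stabilises for cardinality reasons.) Now set $\Arch{E} := E / \lineal(\overline{E_+})$, with positive cone $q(\overline{E_+})$, where $q : E \to \Arch{E}$ is the quotient map, and let $\phi := q$.

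Then I would check, in order: \textbf{(i)} $\Arch{E}$ is a genuine ordered vector space --- quotienting a preordered vector space by its lineality space always produces a proper positive cone, using only $\lineal(\overline{E_+}) \subseteq \overline{E_+}$. \textbf{(ii)} $\Arch{E}$ is Archimedean: if $n\, q(x) \le q(y)$ for all $n \in \N$, then $y - nx \in q^{-1}\bigl(q(\overline{E_+})\bigr) = \overline{E_+} + \lineal(\overline{E_+}) = \overline{E_+}$ for every $n$, so $nx \le y$ in $(E, \overline{E_+})$ for all $n$; since $(E, \overline{E_+})$ is Archimedean, this gives $-x \in \overline{E_+}$, i.e.\ $q(x) \le 0$. \textbf{(iii)} $\phi$ is positive, since $E_+ \subseteq \overline{E_+}$.

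For the universal property, let $F$ be an Archimedean ordered vector space and $\psi : E \to F$ positive. Pulling the Archimedean condition back through $\psi$ shows that $\psi^{-1}(F_+)$ is an Archimedean cone containing $E_+$, hence $\overline{E_+} \subseteq \psi^{-1}(F_+)$, that is, $\psi(\overline{E_+}) \subseteq F_+$. In particular $\psi\bigl(\lineal(\overline{E_+})\bigr) \subseteq F_+ \cap (-F_+) = \{0\}$ (here properness of $F$ is used), so $\psi$ factors through $q$: there is a unique linear map $\mArch{\psi} : \Arch{E} \to F$ with $\psi = \mArch{\psi} \circ \phi$, uniqueness being immediate because $\phi = q$ is surjective. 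Moreover $\mArch{\psi}$ is positive, as $\mArch{\psi}\bigl(q(\overline{E_+})\bigr) = \psi(\overline{E_+}) \subseteq F_+$. Finally, uniqueness of $(\Arch{E}, \phi)$ up to a unique order isomorphism is the usual diagram chase: any two objects satisfying the universal property admit comparison maps in both directions, and applying the uniqueness clause to $\psi = \phi$ forces the two composites to be the identity maps, so these comparison maps are mutually inverse order isomorphisms.

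The cone bookkeeping in the first paragraph is routine. The one point that genuinely needs care is that the Archimedean property survives \emph{both} manipulations applied to $E_+$ --- passage to the smallest Archimedean cone, and the subsequent quotient by its lineality space --- which is exactly the content of the closure construction together with step (ii); the rest is formal.
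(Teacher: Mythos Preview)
The paper does not prove this theorem; it is attributed to Emelyanov, with only a one-line description of the construction (take the algebraically closed hull of $E_+$ and quotient by its lineality space). Your proof carries out exactly this construction and is correct: for convex cones, your condition $D^{(1)}=D$ is equivalent to $D$ being algebraically closed (cf.\ \autoref{prop:algebraically-closed}, dropping the properness clause), so your $\overline{E_+}$ is precisely the algebraically closed hull, and the subsequent quotient and factorisation arguments are sound. One terminological remark: when you say ``$E$ itself is an Archimedean cone'' you are using your own relaxed sense of the word, without properness; in the paper's convention an Archimedean cone is proper, so $E$ would not qualify---but this does no harm, since all you need at that point is a cone $D \supseteq E_+$ with $D^{(1)}=D$ to guarantee the intersection is taken over a nonempty family.
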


Emelyanov's construction is equivalent to taking the algebraically closed hull (in the sense of \cite[\S 16.2.(5)]{Kothe-I}) of the positive cone $E_+$ and then dividing out the lineality space of the resulting cone.
However, this construction does not add an order unit, so the Archimedeanization of an arbitrary space is generally not an AOU space.

The aforementioned results can be used to make a space Archimedean, but they do not add an order unit.
In this paper, we give a related construction that simultaneously makes the space Archimedean and adds an order unit.
To do so, we require a little bit of extra structure: we assume that the space is equipped with a seminorm.

If $E$ is a seminormed preordered vector space and $F$ is an AOU space, we say that a linear map $\psi : E \to F$ is \emph{contractive} if $\lVert \psi(x) \rVert_u \leq p(x)$ for all $x \in E$, where $p$ is the seminorm of $E$ and $\lVert \:\cdot\: \rVert_u$ is the order unit norm on $F$.
We prove the following.

\begin{theorem}
	\label{thm:intro:universal-property}
	For every seminormed preordered vector space $E$, there is an AOU space $\unit{E}$ and a contractive positive linear map $\phi : E \to \unit{E}$ with the following universal property:
	for every AOU space $F$ and every contractive positive linear map $\psi : E \to F$, there is a unique unital positive linear map $\mUnit{\psi} : \unit{E} \to F$ such that $\psi = \mUnit{\psi} \circ \phi$.
	\[ \begin{tikzcd}[column sep={2.5em,between origins}]
		& E\arrow[dl,"\phi",swap]\arrow[dr,"\psi"] \\
		\unit{E}\arrow[rr,"\mUnit{\psi}",densely dashed] & & F
	\end{tikzcd} \]
	By the universal property, $\unit{E}$ is unique up to a unique unital order isomorphism.
\end{theorem}

There are several ways to construct $\unit{E}$.
We take a geometric point of view, based on the following ``internal'' description of $\unit{E}$.
Letting $N = \overline{E_+} \cap -\overline{E_+}$ denote the lineality space of the closure of $E_+$, we construct the Archimedean order unitization as the space $\unit{E} = (E/N) \oplus \R$ with the positive cone
\begin{equation}
	(\unit{E})_+ := \{ (x + N , \lambda) \, : \, d(x,E_+) \leq \lambda \} \label{eqn:intro:internal-definition}
\end{equation}
and order unit $(0 + N , 1)$, where $d(x,E_+) = \inf_{y \in E_+} p(x - y)$ denotes the distance between $x$ and $E_+$ with respect to the seminorm of $E$.
This construction will be worked out in more detail in \mysecref{sec:construction}, and we will prove in \mysecref{sec:universal-property} that $\unit{E}$ satisfies the universal property of \autoref{thm:intro:universal-property},

Other constructions are possible as well, and we show along the way that $\unit{E}$ is also given by the following two characterizations.
First, we show that the cone defined in \eqref{eqn:intro:internal-definition} is equal to
\begin{equation}
	(\unit{E})_+ = \big\{ (x + N , \lambda) \, : \, \varphi(x) \geq -\lambda \ \text{for all $\varphi \in E_+\topdual$ with $\lVert \varphi \rVert \leq 1$} \big\}. \label{eqn:intro:external-definition}
\end{equation}
This characterization is analogous to Werner's construction of the ``partial unitization'' of a ``matrix ordered operator space'' \cite[Definition 4.7]{Werner}.
We prove in \autoref{cor:alternative-formulas} that the sets described in \eqref{eqn:intro:internal-definition} and \eqref{eqn:intro:external-definition} are equal.

Second, we show that $\unit{E}$ can also be obtained as follows.
Let $\Omega = \{ \varphi \in E_+\topdual \, : \, \lVert \varphi \rVert \leq 1\}$ be the intersection of the dual cone with the dual unit ball, equipped with the weak\nobreakdash-$*$ topology.
Then $\Omega$ is a compact Hausdorff space and every element $x \in E$ defines a continuous function $\psi_x : \Omega \to \R$, given by $\psi_x(\varphi) = \varphi(x)$.
This gives us a contractive positive linear map $\psi : E \to C(\Omega)$.
In \mysecref{sec:representations}, we show that $\unit{E}$ is isomorphic to the subspace of $C(\Omega)$ spanned by $\im(\psi)$ and $1$.

\pagebreak
We will provide several examples to illustrate the Archimedean order unitization.
In particular, we show that it is compatible with known constructions in the following two cases.
First, if $E$ is an order unit space equipped with the order unit seminorm, then we prove that $\unit{E} \cong \unitArch{E} \oplus \R$, where $\unitArch{E}$ is the Archimedeanization of Paulsen and Tomforde (see \autoref{thm:intro:Paulsen-Tomforde}) and the direct sum is equipped with the pointwise cone $(\unitArch{E})_+ \oplus \R_{\geq 0}$ and order unit $(u + N, 1)$.
Second, if $E = \mathcal A^\sa$ is the self-adjoint part of a $C^*$\nobreakdash-algebra $\mathcal A$, then we prove that $\unit{E} \cong \tilde{\mathcal A}^\sa$ as AOU spaces, where $\tilde A$ is the $C^*$\nobreakdash-algebra unitization of $\mathcal A$.

\subsubsection*{Applications in ordered vector spaces}

Our results provide a number of new insights in ordered vector spaces.
First, it gives us the following ``$1$\nobreakdash-max-normalization'' of a seminorm on an preordered vector space.

Following \cite{Messerschmidt}, we say that a seminorm $p$ on $E$ is \emph{$\alpha$\nobreakdash-max-normal} if $x \leq y \leq z$ implies $p(y) \leq \alpha \max(p(x),p(z))$.
If $p$ is an arbitrary seminorm on a preordered vector space $E$, then the Archimedean order unitization $\phi : E \to \unit{E}$ with respect to $p$ gives us another seminorm $p_u(x) = \lVert \phi(x) \rVert_u$ on $E$.
Since $\phi$ is positive and contractive and $\lVert \:\cdot\: \rVert_u$ is $1$\nobreakdash-max-normal, $p_u$ is a $1$\nobreakdash-max-normal seminorm smaller than $p$.
We call $p_u$ the \emph{$1$\nobreakdash-max-normalization} of $p$, and we study the relation between $p$ and $p_u$.

\begin{theorem}
	\label{thm:intro:seminorm-normalization}
	Let $E$ be a preordered vector space, let $p$ be a seminorm on $E$, and let $p_u$ be the $1$\nobreakdash-max-normalization of $p$.
	Then:
	\begin{enumerate}[label=(\alph*)]
		\item\label{itm:intro:as:formula} $p_u$ is given by $p_u(x) = \max(d(x,E_+),d(-x,E_+)) =  \max \{ |\varphi(x)| \, : \, \varphi \in E_+\topdual,\ \lVert \varphi \rVert = 1 \}$;
		
		\item\label{itm:intro:as:unit-ball} the open unit ball of $p_u$ is the full hull of the open unit ball of $p$;
		
		\item\label{itm:intro:as:normalization} $p_u$ is the largest $1$\nobreakdash-max-normal seminorm smaller than $p$;
		
		\item\label{itm:intro:as:equivalent} for all $\alpha \geq 1$, we have $\frac{1}{\alpha} p \leq p_u \leq p$ if and only if $p$ is $\alpha$\nobreakdash-max-normal;
		
		\item $p$ and $p_u$ are equivalent if and only if the topology induced by $p$ is locally full;
		
		\item\label{itm:intro:as:closure} the closure of $E_+$ with respect to $p$ is equal to the closure of $E_+$ with respect to $p_u$;
		
		\item\label{itm:intro:as:functional} a positive linear functional $\varphi : E \to \R$ is continuous with respect to $p$ if and only if it is continuous with respect to $p_u$, and in this case one has $\lVert \varphi \rVert = \lVert \varphi \rVert_u$.
	\end{enumerate}
\end{theorem}

We note that \ref{itm:intro:as:closure} and \ref{itm:intro:as:functional} hold regardless of whether or not $p$ and $p_u$ are equivalent.
The proof of \autoref{thm:intro:seminorm-normalization} will be given in \mysecref{sec:seminorm-normalization}, except part \ref{itm:intro:as:functional}, which is proved in \autoref{cor:extension-comparison}.

This in turn has an application to normal cones in locally convex spaces.
If $E$ is a locally convex ordered vector space whose positive cone is normal, then it is well-known that the topology of $E$ can be given by a family of monotone seminorms (e.g.{} \cite[Theorem 2.25]{Aliprantis-Tourky}), but it is not so well-known what these seminorms look like.
They are usually constructed by choosing a neighbourhood base consisting of full and absolutely convex sets, and taking the respective Minkowski functionals.
Using \myautoref{thm:intro:seminorm-normalization}{itm:intro:as:formula} and \ref{itm:intro:as:unit-ball}, we get a direct formula for these seminorms.

\begin{theorem}
	\label{thm:intro:lcs-normal}
	Let $E$ be a locally convex ordered vector space whose positive cone is normal.
	If the topology of $E$ is given by the family of seminorms $\{p_\lambda\}_{\lambda \in \Lambda}$, then it is also given by the family of $1$\nobreakdash-max-normal seminorms $\{p_{\lambda,u}\}_{\lambda \in \Lambda}$, where $p_{\lambda,u}(x) = \max(d_\lambda(x,E_+),d_\lambda(-x,E_+))$.
\end{theorem}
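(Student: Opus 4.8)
The plan is to show that the topology $\tau$ of $E$ coincides with the topology $\tau_u$ generated by the family $\{p_{\lambda,u}\}_{\lambda\in\Lambda}$. One inclusion is essentially free: taking the point $0\in E_+$ in the defining infimum gives $d_\lambda(x,E_+)\le p_\lambda(x)$ and $d_\lambda(-x,E_+)\le p_\lambda(x)$, so $p_{\lambda,u}\le p_\lambda$ for every $\lambda$; hence $\tau_u\subseteq\tau$. Each $p_{\lambda,u}$ is also a monotone seminorm (immediate from the formula: if $0\le x\le y$ then $d_\lambda(x,E_+)=0=d_\lambda(y,E_+)$, while $d_\lambda(-x,E_+)\le d_\lambda(-y,E_+)$ by shifting the competing point of $E_+$ by $y-x\in E_+$). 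It remains to prove $\tau\subseteq\tau_u$, and this is where normality is needed.

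I would use two ingredients. First, the standard characterization of a normal cone (see \cite{Cones&Duality}): $E_+$ is normal if and only if for every $\tau$-neighbourhood $V$ of $0$ there is a $\tau$-neighbourhood $W$ of $0$ whose full hull satisfies $\fh(W)\subseteq V$. Second, \myautoref{thm:intro:seminorm-normalization}{itm:intro:as:unit-ball}: the open unit ball of $p_{\lambda,u}$ is $\fh(\{x : p_\lambda(x)<1\})$; and since $E_+$ is a cone, the full hull commutes with multiplication by positive scalars, so $\{x : p_{\lambda,u}(x)<\varepsilon\}=\fh(\{x : p_\lambda(x)<\varepsilon\})$ for every $\varepsilon>0$.

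Now fix a $\tau$-neighbourhood $V$ of $0$; I claim it is a $\tau_u$-neighbourhood of $0$. Using the first ingredient, choose a $\tau$-neighbourhood $W$ of $0$ with $\fh(W)\subseteq V$. Since $\{p_\lambda\}_{\lambda\in\Lambda}$ generates $\tau$, $W$ contains a finite intersection $\bigcap_{i=1}^n\{x : p_{\lambda_i}(x)<\varepsilon\}$, and by directedness of the family this in turn contains $\{x : p_\mu(x)<\varepsilon\}$ for a suitable single index $\mu$. Applying the full hull (which preserves inclusions) and then the second ingredient,
\[
	\{x : p_{\mu,u}(x)<\varepsilon\}=\fh\bigl(\{x : p_\mu(x)<\varepsilon\}\bigr)\subseteq\fh(W)\subseteq V.
\]
The left-hand side is a basic $\tau_u$-neighbourhood of $0$, so $V$ is a $\tau_u$-neighbourhood of $0$. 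Hence $\tau\subseteq\tau_u$, so $\tau=\tau_u$; combined with the formula $p_{\lambda,u}(x)=\max(d_\lambda(x,E_+),d_\lambda(-x,E_+))$ from \myautoref{thm:intro:seminorm-normalization}{itm:intro:as:formula}, this gives the theorem.

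The delicate point — and the reason the argument must route through a single seminorm rather than proceed \emph{index by index} — is that the obvious coordinatewise comparison is unavailable: an estimate $p_\lambda\le C\,p_{\lambda,u}$ would make $p_\lambda$ and $p_{\lambda,u}$ equivalent, which by \myautoref{thm:intro:seminorm-normalization}{itm:intro:as:equivalent} would force the one-seminorm topology of $p_\lambda$ to be locally full, and this can fail even when $E_+$ is normal for the joint topology $\tau$. Equivalently, the full hull does not commute with finite intersections, so a typical basic $\tau_u$-neighbourhood $\bigcap_{i=1}^{n}\fh(\{x : p_{\lambda_i}(x)<\varepsilon\})$ can be strictly — indeed unboundedly — larger than $\fh\bigl(\bigcap_{i=1}^{n}\{x : p_{\lambda_i}(x)<\varepsilon\}\bigr)$. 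This is precisely where directedness of $\{p_\lambda\}_{\lambda\in\Lambda}$ (closure under finite suprema) enters: it lets the finite intersection of $p_\lambda$-balls be absorbed into a single $p_\mu$-ball \emph{before} the full hull is formed, after which \myautoref{thm:intro:seminorm-normalization}{itm:intro:as:unit-ball} applies verbatim. I expect this observation, rather than any computation, to be the heart of the argument.
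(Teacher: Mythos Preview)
Your argument is correct and follows essentially the same route as the paper's: both use \autoref{prop:full-hull} (the open $p_{\lambda,u}$-ball is the full hull of the open $p_\lambda$-ball) together with a full-neighbourhood characterization of normality, and both pass to a \emph{single} seminorm ball before applying $\fh$. The only cosmetic difference is that the paper phrases normality as ``there is a neighbourhood base $\{V_\alpha\}$ of full sets'' and picks $\varepsilon B_\lambda\subseteq V_\alpha$ directly, whereas you use the equivalent form ``every $V$ contains $\fh(W)$ for some $W$''.

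Your explicit appeal to directedness is in fact more careful than the paper, which silently writes ``there are $\lambda\in\Lambda$ and $\varepsilon>0$ with $\varepsilon B_\lambda\subseteq V_\alpha$'' as though single balls form a base. You are right that this hypothesis is doing real work and is not stated in the theorem: without it the conclusion can fail. For instance, take $E=\R^2$ with the normal cone $E_+=\{(a,b):b\ge|a|\}$ and the non-directed generating family $p_1(x,y)=|x|$, $p_2(x,y)=|y|$; since every real $a$ occurs as a first coordinate in $E_+$, one gets $d_1(\,\cdot\,,E_+)\equiv 0$ and hence $p_{1,u}\equiv 0$, so $\{p_{1,u},p_{2,u}\}$ no longer separates the $x$-direction. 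Thus your closing paragraph is not a digression but identifies a tacit hypothesis that both the statement and the paper's proof require.
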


Here $d_\lambda(x,E_+) = \inf_{y \in E_+} p_\lambda(x - y)$ denotes the $p_\lambda$\nobreakdash-distance between $x$ and $E_+$.
Since these seminorms are $1$\nobreakdash-max-normal, in particular they are monotone.

For a normed ordered vector space, we have the following special case.

\begin{corollary}
	\label{cor:intro:norm-normal}
	Let $E$ be a normed ordered vector space whose positive cone is normal.
	Then $\lVert x \rVert_u = \max(d(x,E_+),d(-x,E_+))$ defines a $1$\nobreakdash-max-normal norm that is equivalent to the original norm of $E$.
\end{corollary}

These simple formulas are not hard to prove, but they do not seem to be well known.
Moreover, our approach shows that the seminorms $p_{\lambda,u}$ in \autoref{thm:intro:lcs-normal} and the norm $\lVert \:\cdot\: \rVert_u$ in \autoref{cor:intro:norm-normal} can be thought of as coming from the order unit norm in the respective unitization of $E$.

The proofs of \autoref{thm:intro:lcs-normal} and \autoref{cor:intro:norm-normal} will be given in \mysecref{sec:seminorm-normalization}.

\pagebreak

\subsubsection*{Applications to matrix ordered operator spaces}

The Archimedean order unitization from \autoref{thm:intro:universal-property} is similar to Werner's ``partial unitization'' of a ``matrix ordered operator space'' \cite[\S 4]{Werner} (see also \cite{Karn-unitization,Karn-corrigendum,Blecher-et-al}).
Such spaces carry much more data: a matrix ordered operator space consists of a complex $*$\nobreakdash-vector space $E$ along with a norm $\lVert \:\cdot\: \rVert_n$ on $M_n(E)$ for all $n \in \N$ and a convex cone $M_n(E)_+ \subseteq M_n(E)^\sa$ for all $n \in \N$, subject to certain compatibility requirements (see \mysecref{sec:operator-systems} for details).

In \mysecref{sec:operator-systems}, we use the results from this paper to prove new results about Werner's partial unitization.
First, in \mysecref{subsec:pu-simplification}, we use the results from this paper to give a simpler ``internal'' description of the positive cone of Werner's partial unitization, similar to \eqref{eqn:intro:internal-definition}.
The precise statement and proof can be found in \autoref{thm:pu-simplification}.
Second, in \mysecref{subsec:non-unital-operator-systems}, we prove that the embedding $\phi : E \to \pu{E}$ of a matrix ordered operator space $E$ in its partial unitization $\pu{E}$ is a complete isomorphism with $\lVert \phi \rVert_{cb} \leq 1$, $\lVert \phi^{-1} \rVert_{cb} \leq \kappa$ if and only if $\lVert \:\cdot\: \rVert_n$ is $\kappa$\nobreakdash-max-normal for all $n \in \N$.
This result was announced in \cite[Remark 4.14(iii)]{Werner}, but to our knowledge no proof exists in the literature.
For $\kappa = 1$ we recover a simple axiomatic characterization of abstract non-unital operator systems, which was first proved by Russell \cite[Theorem~1.1]{Russell}; see \autoref{cor:isometric-operator-system} below.

\subsubsection*{Organization of the paper}

In \mysecref{sec:preliminaries}, we recall the necessary prerequisites.
In \mysecref{sec:construction}, we construct the space $\unit{E}$ and prove that it is a well-defined AOU space.
In \mysecref{sec:universal-property}, we prove necessary and sufficient conditions for a continuous positive linear map $E \to F$ to extend to a map $\unit{E} \to F$, and we use this to prove the universal property (\autoref{thm:intro:universal-property}) and a number of related results.
In \mysecref{sec:seminorm-normalization}, we study the relation between the original seminorm $p$ of $E$ and the seminorm $p_u$ inherited from the unitization.
In \mysecref{sec:special-cases}, we work out two examples: the unitization of a (not necessarily Archimedean) order unit space, and the unitization of the self-adjoint part of a $C^*$\nobreakdash-algebra.
In \mysecref{sec:representations}, we discuss the role of unitizations in representation theory of ordered vector spaces, and we give an alternative construction of $\unit{E}$ in terms of a representation $E \to C(\Omega)$.
Finally, in \mysecref{sec:operator-systems}, we use our results to prove a number of new results about Werner's partial unitization of a matrix ordered operator space.

\bigskip
\noindent
\small
\emph{Acknowledgements.}
A rudimentary version of this paper was contained in the author's master thesis \cite[Chapter 3]{Dobben-MSc-thesis}, written at Leiden University under the supervision of Onno van Gaans and Marcel de Jeu.
The remainder of this paper was written intermittently at Delft University of Technology, supported by the Dutch Research Council (NWO), project number 613.009.127, and at the Technical University of Denmark, supported by the Carlsberg Foundation, grant number CF21-0682.
I am grateful to Onno van Gaans, Marcel de Jeu, Hent van Imhoff for many helpful comments and suggestions, and to Vern Paulsen for pointing me towards recent literature on non-unital operator systems.

\normalsize

\section{Preliminaries}
\label{sec:preliminaries}

Throughout this paper, all vector spaces will be over the real numbers, unless stated otherwise.
For a topological vector space $E$, we denote the topology by $\lintop$, the algebraic dual by $E\algdual$, and the topological dual by $E\topdual$.

\subsection{Ordered vector spaces}
\label{subsec:ovs}
Let $E$ be a vector space. A \emph{convex cone}%
	\hair\footnote{Some authors call this a \emph{wedge}, and reserve the term \emph{cone} for what we call a \emph{proper cone}.}
is a non-empty subset $\mywedge \subseteq E$ satisfying $\mywedge + \mywedge \subseteq \mywedge$ and $\lambda \mywedge \subseteq \mywedge$ for all $\lambda \in \R_{\geq 0}$.
The \emph{lineality space} of $\mywedge$ is the largest subspace contained in $\mywedge$, which is given by $\lineal(\mywedge) = \mywedge \cap -\mywedge$.
We say that $\mywedge$ is \emph{proper} if $\lineal(\mywedge) = \{0\}$, and \emph{generating} if $\mywedge - \mywedge = E$.
A \emph{base} of a convex cone $E_+$ is a convex subset $\mathcal B \subseteq E_+$ such that every element of $E_+ \setminus \{0\}$ can be written as $\lambda b$ with $\lambda > 0$ and $b \in \mathcal B$ in exactly one way.

A \emph{preordered vector space} is a tuple $(E,E_+)$, where $E$ is a real vector space and $E_+ \subseteq E$ is a convex cone.
The cone $E_+$ induces a vector preorder $\leq$ on $E$, where $x \leq y$ if and only if $y - x \in E_+$.
The preorder $\leq$ is a partial order if and only if $E_+$ is proper, in which case $(E,E_+)$ is called an \emph{ordered vector space}.

Let $(E,E_+)$ be a preordered vector space.
Given $x,y \in E$, the \emph{order interval} $[x,y]$ is the set $\{z \in E \, : \, x \leq z \leq y\}$.
A subset $M \subseteq E$ is called \emph{full} (or \emph{order-convex}) if $[x,y] \subseteq M$ whenever $x,y \in M$.
The \emph{full hull} $\fh(M)$ of a set $M \subseteq E$ is the smallest full set containing $M$, and is given by $\fh(M) = \bigcup_{x,y \in M} [x,y]$.

A \emph{preordered topological vector space} $(E,E_+,\lintop)$ is a preordered vector space $(E,E_+)$ equipped with a linear topology $\lintop$, where we make no assumptions on compatibility between $E_+$ and $\lintop$.
If $\lintop$ has a neighbourhood basis at $0$ consisting of $E_+$\nobreakdash-full sets, then we say that the topology $\lintop$ is \emph{locally full} (with respect to $E_+$) and that the cone $E_+$ is \emph{normal} (with respect to $\lintop$).

Following \cite{Messerschmidt}, we say that a seminorm $p$ on $E$ is \emph{$\alpha$\nobreakdash-max-normal} if $x \leq y \leq z$ implies $p(y) \leq \alpha \max(p(x),p(z))$, and \emph{monotone} if $0 \leq x \leq y$ implies $p(x) \leq p(y)$.
Clearly every $1$\nobreakdash-max-normal seminorm is monotone.
Conversely, it follows from the proof of \cite[Theorem 2.38]{Aliprantis-Tourky} that every monotone seminorm is $3$\nobreakdash-max-normal (see also \cite[Proposition 1]{Messerschmidt}).
It is well known that $E_+$ is normal with respect to $p$ if and only if $p$ is equivalent to an $\alpha$\nobreakdash-max-normal seminorm for some $\alpha \geq 1$; see for instance \cite[Theorem 2.38]{Aliprantis-Tourky}.

A linear map $T : E \to F$ between preordered vector spaces $(E,E_+)$ and $(F,F_+)$ is called \emph{positive} if $T[E_+] \subseteq F_+$, and \emph{bipositive} if $E_+ = T^{-1}[F_+]$.
An \emph{order embedding} is an injective bipositive map, and an \emph{order isomorphism} is a surjective order embedding.

\subsection{Archimedean order unit spaces}
\label{subsec:AOU}
For $x,y \in E$, denote by $\cosegment{x}{y}$ the half-open line segment $\{ (1 - t)x + t y \, : \, t \in [0,1)\}$ joining $x$ and $y$.
Following \cite[\S 16.2]{Kothe-I}, we say that a convex set $M$ is \emph{algebraically closed} if for all $x,y \in E$ such that $\cosegment{x}{y} \subseteq M$, one has $y \in M$.

A preordered vector space $(E,E_+)$ and its positive cone $E_+$ are said to be \emph{Archimedean} if $E_+$ is proper and for every $x \in E$ it holds that, if $\{nx \, : \, n \in \N_1\}$ is bounded above, then $x \leq 0$.
An equivalent formulation of the Archimedean property is given by the following proposition, whose simple proof we omit.
\begin{proposition}
	\label{prop:algebraically-closed}
	A convex cone is Archimedean if and only if it is proper and algebraically closed.
\end{proposition}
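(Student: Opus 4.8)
The plan is to prove the two implications separately. Since ``proper'' appears on both sides of the equivalence, the real content is that, for a convex cone $\mywedge$, the Archimedean condition is equivalent to algebraic closedness; in fact neither implication will use properness.

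For ``Archimedean $\Rightarrow$ algebraically closed'', I would take $x, y \in E$ with $\cosegment{x}{y} \subseteq \mywedge$ and show $y \in \mywedge$. Evaluating the parametrization of $\cosegment{x}{y}$ at $t = 1 - \tfrac1n$ gives $y + \tfrac1n(x - y) \in \mywedge$ for every $n \in \N_1$, and multiplying by $n$ gives $ny + (x - y) \in \mywedge$, i.e. $n(-y) \leq x - y$. So $\{\, n(-y) : n \in \N_1 \,\}$ is bounded above, and the Archimedean property yields $-y \leq 0$, that is, $y \in \mywedge$. This direction is a direct unravelling of the definitions.

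For the converse, ``algebraically closed $\Rightarrow$ Archimedean'', I would take $x \in E$ with $\{\, nx : n \in \N_1 \,\}$ bounded above, say $nx \leq y$ for all $n$, and show $-x \in \mywedge$. Scaling $y - nx \in \mywedge$ by $\tfrac1n$ gives $\tfrac1n y - x \in \mywedge$ for every $n$. The crux is that these points form only a discrete subset of the half-open segment $\cosegment{y-x}{-x}$, which (after substituting $s = 1 - t$) consists exactly of the points $sy - x$ with $s \in (0,1]$; before algebraic closedness can be applied, the whole segment must be shown to lie in $\mywedge$. To fill the gaps I would use convexity: for each $m \in \N_1$ the convex combinations $\mu(\tfrac1m y - x) + (1-\mu)(y - x)$, $\mu \in [0,1]$, lie in $\mywedge$ and range over all $sy - x$ with $s \in [\tfrac1m, 1]$, and since $\bigcup_{m \geq 1} [\tfrac1m, 1] = (0,1]$ this accounts for every point of $\cosegment{y-x}{-x}$. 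Algebraic closedness then gives $-x \in \mywedge$. I expect this interpolation step to be the only genuine obstacle — the subtlety being that algebraic closedness concerns an entire half-open segment rather than a sequence tending to its endpoint, and one bridges that gap using convexity of $\mywedge$.
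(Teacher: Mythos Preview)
Your argument is correct in both directions; the only thing to note is that the paper actually omits this proof entirely (``whose simple proof we omit''), so there is no approach to compare against. Your write-up would serve perfectly well as the missing proof, and your observation that convexity is needed to pass from the discrete family $\tfrac1n y - x$ to the full half-open segment is exactly the small subtlety one has to address.
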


An element $u \in E$ is called an \emph{order unit} if for every $x \in E$ there exists some $\lambda > 0$ such that $-\lambda u \leq x \leq \lambda u$.
If $u \in E$ is an order unit, then the function
\begin{equation}
	\mu_u(x) = \inf \{ \lambda \in \R_{\geq 0} \, : \, -\lambda u \leq x \leq \lambda u \} \label{eqn:order-unit}
\end{equation}
defines a seminorm on $E$, called the \emph{order unit seminorm}.
It is easy to see that $\mu_u$ is $1$\nobreakdash-max-normal.

The following proposition will be helpful in obtaining a simple formula for $\mu_u$.

\begin{proposition}[{compare \cite[Lemma 2.3]{Kadison-representation}}]
	\label{prop:order-unit-seminorm}
	Let $(E,E_+)$ be a preordered vector space containing an order unit $u \in E_+$ such that $-u \notin E_+$.
	Then for every $x \in E$ the numbers
	\begin{align*}
		\alpha_u(x) &:= \sup\{\alpha \in \R \, : \, \alpha u \leq x\},\\
		\omega_u(x) &:= \inf\{\omega \in \R \, : \, x \leq \omega u\}
	\end{align*}
	are well-defined and satisfy $\mu_u(x) = \max(-\alpha_u(x),\omega_u(x))$.
\end{proposition}
\begin{proof}
	Choose $\lambda_0 > 0$ such that $-\lambda_0 u \leq x \leq \lambda_0 u$.
	Then the set $\{\omega \in \R \, : \, x \leq \omega u\}$ is non-empty (it contains $\lambda_0$) and bounded below by $-\lambda_0$ (after all, if $-\lambda_0 u \leq x \leq \omega u$, then $\omega \geq -\lambda_0$ because $-u \notin E_+$).
	This shows that $\omega_u(x)$ is well-defined.
	Analogously, $\alpha_u(x)$ is also well-defined.
	
	To prove the formula for $\mu_u(x)$, note that
	\[ \{\lambda \in \R \, : \, -\lambda u \leq x \leq \lambda u\} = \{\alpha \in \R \, : \, -\alpha u \leq x\} \cap \{\omega \in \R \, : \, x \leq \omega u\}. \]
	Since each of these sets is upwards closed, we have
	\[ \inf \{\lambda \in \R \, : \, -\lambda u \leq x \leq \lambda u\} = \max(-\alpha_u(x) , \omega_u(x)). \]
	Moreover, since the set in the left-hand side does not contain negative numbers (because $-u \notin E_+$), the infimum may also be taken over $\lambda \in \R_{\geq 0}$ instead of $\lambda \in \R$.
\end{proof}

An \emph{order unit space} is a triple $(E,E_+,u)$, where $(E,E_+)$ is a preordered vector space and $u \in E_+$ is a designated order unit.
The \emph{state space} of an order unit space $(E,E_+,u)$ is the set $\{ \varphi \in E_+\algdual \, : \, \varphi(u) = 1\}$.

An \emph{Archimedean order unit \textup(AOU\textup) space} is an order unit space $(E,E_+,u)$ for which $E_+$ is Archimedean.
If $(E,E_+,u)$ is an AOU space, then $\mu_u$ is a norm, which we denote by $\lVert \:\cdot\: \rVert_u$.
In this case, the infimum in \eqref{eqn:order-unit} is attained, the closed unit ball of $\lVert \:\cdot\: \rVert_u$ coincides with the order interval $[-u,u]$, and the positive cone $E_+$ is $\lVert \:\cdot\: \rVert_u$\nobreakdash-closed.
See for instance \cite[Theorem 2.55]{Aliprantis-Tourky}.

A positive linear map $T : E \to F$ between order unit spaces $(E,E_+,u)$ and $(F,F_+,w)$ is called \emph{unital} if $T(u) = w$.

\section{Construction of the unitization}
\label{sec:construction}

In this section, we give the concrete construction of the minimal Archimedean order unitization, and we establish the basic properties of this space.
We will prove in \mysecref{sec:universal-property} that this space satisfies the universal property from \autoref{thm:intro:universal-property}.

\begin{definition}
	\label{def:minimal-unitization}
	Let $(E,E_+,p)$ be a seminormed preordered vector space.
	Let $\overline{E_+}$ denote the closure of $E_+$ with respect to the seminorm $p$, and let $N := \lineal(\overline{E_+}) = \overline{E_+} \cap -\overline{E_+}$ denote its lineality space.
	The \emph{Archimedean order unitization} of $E$ is the triple $(\tilde E, \tilde E_+, u)$, where
	\begin{align*}
		\tilde E &:= E/N \oplus \R; \\
		\tilde E_+ &:= \big\{(x + N,\lambda) \in \tilde E \: : \: \lambda \geq d(x,E_+)\big\}; \\\noalign{\smallskip}
		u &:= (0 + N, 1).
	\end{align*}
	Here $d(x,E_+) = \inf_{y\in E_+} p(x - y)$ denotes the distance (in $E$) between $x$ and $E_+$.
\end{definition}

We will prove in \autoref{prop:convex-cone} and \autoref{prop:adjoined-order-unit} that $\tilde E_+$ is well-defined and that $(\tilde E, \tilde E_+, u)$ is indeed an AOU space.
The proof that this is indeed the Archimedean order unitization from \autoref{thm:intro:universal-property} is postponed until \mysecref{sec:universal-property}.

First, to illustrate the construction, consider the following two examples.

\begin{example}
	\label{xmpl:ice-cream-cone}
	If $E_+ = \{0\}$ and $p$ is a norm, then $\tilde E_+ = \{(x, \lambda) \in E \oplus \R \, : \, \lVert x \rVert \leq \lambda\}$ is the ``ice cream cone'' for the norm $p$.
\end{example}

\begin{example}
	\label{xmpl:R2-lq}
	Let $(E,E_+,p) = (\R^2,\R_{\geq 0}^2,\lVert \:\cdot\: \rVert_{\ell_q})$, where $\lVert \:\cdot\: \rVert_{\ell_q}$ denotes the $\ell_q$\nobreakdash-norm for $1 \leq q \leq \infty$.
	Then $N = \{0\}$, so we have $\tilde E = \R^2 \oplus \R$, and the set $\mathcal B = \{(x,1) \in \R^2 \oplus \{1\} \, : \, d(x,E_+) \leq 1\}$ is a base for the cone $\tilde E_+$.
	The shape of this base depends on the value of $q$, as illustrated in \autoref{fig:R2-example}.
	It is not hard to see that the resulting cone $\tilde E_+$ is polyhedral if and only if $q = 1$ or $q = \infty$, and a lattice cone if and only if $q = \infty$ (use \cite[Theorem 3.21]{Aliprantis-Tourky}).
	This shows that the properties of $\tilde E_+$ depend not only on $E_+$ and the topology of $E$, but also on the seminorm.
	\begin{figure}[h!t]
		\setlength{\belowcaptionskip}{-4pt}
		\centering
		\begin{tikzpicture}[scale=.65,
			            cone_style/.style={red,fill opacity=0.5},
			            cone_border_style/.style={red!70!black,thick},
			            cone_label/.style={red!30!black}]
			\begin{scope}[xshift=-6cm]
				\draw[gray!50,thin] (-2,-2) grid (2,2);
				\fill[cone_style] (0,2) -- (-1,2) -- (-1,0) -- (0,-1) -- (2,-1) |- (0,2);
				\draw[thick] (-2,0) -- (2,0) (0,-2) -- (0,2);
				\draw[cone_border_style] (-1,2) -- (-1,0) -- (0,-1) -- (2,-1);
				\node[cone_label] at (1,1) {$\mathcal B$};
				\node[anchor=base] at (0,-3) {$q = 1$};
			\end{scope}
			\begin{scope}[xshift=0cm]
				\draw[gray!50,thin] (-2,-2) grid (2,2);
				\fill[cone_style] (0,2) -- (-1,2) -- (-1,0) arc (180:270:1cm) -- (2,-1) |- (0,2);
				\draw[thick] (-2,0) -- (2,0) (0,-2) -- (0,2);
				\draw[cone_border_style] (-1,2) -- (-1,0) arc (180:270:1cm) -- (2,-1);
				\node[cone_label] at (1,1) {$\mathcal B$};
				\node[anchor=base] at (0,-3) {$q = 2$};
			\end{scope}
			\begin{scope}[xshift=6cm]
				\draw[gray!50,thin] (-2,-2) grid (2,2);
				\fill[cone_style] (0,2) -- (-1,2) -- (-1,-1) -- (2,-1) |- (0,2);
				\draw[thick] (-2,0) -- (2,0) (0,-2) -- (0,2);
				\draw[cone_border_style] (-1,2) -- (-1,-1) -- (2,-1);
				\node[cone_label] at (1,1) {$\mathcal B$};
				\node[anchor=base] at (0,-3) {$q = \infty$};
			\end{scope}
		\end{tikzpicture}
		\caption{The base of the positive cone of the minimal Archimedean order unitization of $(\R^2,\R_{\geq 0}^2,\lVert \:\cdot\: \rVert_{\ell_q})$ in the $z = 1$ plane, for different values of $q$.}
		\label{fig:R2-example}
	\end{figure}
\end{example}

We proceed to show that $(\tilde E, \tilde E_+, u)$ is a (well-defined) AOU space, and we study the basic properties of this space.

\pagebreak
\begin{proposition}
	\label{prop:convex-cone}
	Let $(E,E_+,p)$ be a seminormed preordered vector space.
	Then the set $\tilde E_+$ from \autoref{def:minimal-unitization} is a well-defined, proper convex cone.
\end{proposition}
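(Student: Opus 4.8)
The plan is to verify the three assertions—well-definedness, being a convex cone, and properness—in that order, with the main work being the properness claim. The function $d(\,\cdot\,,E_+) : E \to \R$ satisfies $d(x,E_+) = d(x',E_+)$ whenever $x - x' \in N$, since $N \subseteq \overline{E_+}$ and $\overline{E_+} + E_+ \subseteq \overline{E_+}$; more precisely, for $n \in N$ and $\varepsilon > 0$ we have $p(n - y) < \varepsilon$ for some $y \in E_+$, hence $d(x + n, E_+) \leq d(x, E_+) + p(n - y) + (\text{anything}) $—I would instead argue cleanly via $d(x+n,E_+) \le d(x,E_+)$ using $-n \in \overline{E_+}$ (approximate $-n$ by elements of $E_+$ and use the triangle inequality for $p$), and symmetrically $d(x,E_+) \le d(x+n,E_+)$ using $n \in \overline{E_+}$. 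This shows the defining inequality $\lambda \geq d(x,E_+)$ depends only on the coset $x + N$, so $\tilde E_+$ is a well-defined subset of $\tilde E = E/N \oplus \R$.

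For the convex-cone property, I would first record the elementary facts that $d(\,\cdot\,,E_+)$ is positively homogeneous (because $E_+$ is a cone: $d(\mu x, E_+) = \mu\, d(x,E_+)$ for $\mu \ge 0$) and subadditive (because $E_+ + E_+ \subseteq E_+$ and $p$ is subadditive: $d(x + x', E_+) \le d(x,E_+) + d(x',E_+)$). Closure under addition is then immediate: if $\lambda \ge d(x,E_+)$ and $\lambda' \ge d(x',E_+)$, then $\lambda + \lambda' \ge d(x,E_+) + d(x',E_+) \ge d(x+x',E_+)$. Closure under multiplication by $\mu \in \R_{\ge 0}$ follows from positive homogeneity (with the case $\mu = 0$ handled separately, noting $d(0,E_+) = 0$ so $(0+N,0) \in \tilde E_+$, which also shows $\tilde E_+ \neq \emptyset$). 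Nonemptiness also follows since $\tilde E_+$ contains $u = (0+N,1)$.

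The properness claim $\lineal(\tilde E_+) = \{0\}$ is the crux. Suppose $(x+N,\lambda)$ and $(-x+N,-\lambda) = -(x+N,\lambda)$ both lie in $\tilde E_+$, i.e.\ $\lambda \ge d(x,E_+)$ and $-\lambda \ge d(-x,E_+)$. Since $d \ge 0$, adding gives $0 \ge d(x,E_+) + d(-x,E_+) \ge 0$, forcing $\lambda = 0$ and $d(x,E_+) = d(-x,E_+) = 0$. Now $d(x,E_+) = 0$ means $x \in \overline{E_+}$, and $d(-x,E_+) = 0$ means $-x \in \overline{E_+}$, so $x \in \overline{E_+} \cap -\overline{E_+} = N$, i.e.\ $x + N = 0$ in $E/N$. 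Hence $(x+N,\lambda) = 0$, as required. The only subtle point here is the identification $\{y : d(y,E_+) = 0\} = \overline{E_+}$, which is just the definition of the closure of $E_+$ in the seminorm $p$ together with the formula $d(y,E_+) = \inf_{z \in E_+} p(y-z)$; I would state this explicitly since in a merely seminormed space one must be slightly careful that "$d(y,E_+)=0$" is exactly membership in the seminorm-closure. I do not anticipate any serious obstacle; the argument is a routine packaging of subadditivity, positive homogeneity, and the elementary properties of distance-to-a-set.
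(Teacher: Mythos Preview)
Your proposal is correct and follows essentially the same approach as the paper: establish positive homogeneity and subadditivity of $x \mapsto d(x,E_+)$, use these for well-definedness (via $d(x,E_+) \le d(x+z,E_+) + d(-z,E_+) = d(x+z,E_+)$ for $z \in N$) and the convex-cone axioms, and deduce properness exactly as you do. The only cosmetic difference is that the paper records subadditivity \emph{first} and then applies it cleanly to well-definedness, rather than giving a separate approximation argument; your corrected version of the well-definedness step is just subadditivity unpacked.
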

\begin{proof}
	Since $E_+$ is a convex cone, for all $x,y \in E$ and all $\alpha > 0$ we have
	\begin{align}
		\label{eqn:well-defined:alpha-x} d(\alpha x,E_+) &= \inf_{y\in E_+} \lVert \alpha x - y\rVert = \inf_{z\in E_+} \lVert \alpha x - \alpha z\rVert = \alpha\, d(x,E_+); \\\noalign{\smallskip}
		\nonumber d(x + y,E_+) &= \inf_{v,w\in E_+} \lVert x + y - v - w\rVert\\
		\label{eqn:well-defined:x+y} &\leq \inf_{v,w\in E_+} \lVert x - v\rVert + \lVert y - w\rVert\\
		\nonumber &= d(x,E_+) + d(y,E_+).
	\end{align}
	To show that $\tilde E_+$ is well-defined, note that we have $\overline{E_+} = \{z \in E \, : \, d(z,E_+) = 0\}$, and therefore $N = \{z \in E \, : \, d(z,E_+) = d(-z,E_+) = 0\}$.
	Consequently, by \eqref{eqn:well-defined:x+y}, for all $x \in E$, $z \in N$ we have
	\[ d(x, E_+) \leq d(x + z, E_+) + d(-z, E_+) = d(x + z, E_+) \leq d(x, E_+) + d(z, E_+) = d(x, E_+), \]
	hence $d(x,E_+) = d(x + z, E_+)$.
	This shows that $d(x, E_+) = d(x', E_+)$ whenever $x + N = x' + N$, so $\tilde E_+$ is well-defined.
	
	Using \eqref{eqn:well-defined:alpha-x} and \eqref{eqn:well-defined:x+y}, it is easy to see that $\tilde E_+$ is a convex cone.
	
	To show that $\tilde E_+$ is proper, let $(x + N,\lambda) \in \tilde E_+ \cap -\tilde E_+$ be given.
	Then we have $0 \leq d(x,E_+) \leq \lambda$ as well as $0 \leq d(-x,E_+) \leq -\lambda$, so we find $\lambda = 0$ as well as $d(x,E_+) = d(-x,E_+) = 0$. From the latter it follows that $x,-x \in \overline{E_+}$, so $x \in N$.
	Therefore $(x + N,\lambda) = (0 + N , 0)$, which shows that $\tilde E_+$ is proper.
\end{proof}

\begin{proposition}
	\label{prop:adjoined-order-unit}
	Let $(E,E_+,p)$ be a seminormed preordered vector space.
	Then the cone $\tilde E_+$ defined in \autoref{def:minimal-unitization} is Archimedean, and $(0 + N,1) \in \tilde E_+$ is an order unit.
\end{proposition}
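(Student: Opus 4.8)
The plan is to treat the two assertions separately, the order unit claim being routine and the Archimedean claim carrying the real content.

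First the order unit. Note $(0+N,1)\in\tilde E_+$ is immediate since $d(0,E_+)=0\le 1$ (using $0\in E_+$). For the order unit property, given an arbitrary $v=(x+N,\lambda)\in\tilde E$, I want a scalar $\mu>0$ with $-\mu u\le v\le\mu u$. Unwinding \autoref{def:minimal-unitization}, the relation $v\le\mu u$ says $\mu u - v=(-x+N,\mu-\lambda)\in\tilde E_+$, i.e.\ $\mu-\lambda\ge d(-x,E_+)$, while $-\mu u\le v$ says $(x+N,\lambda+\mu)\in\tilde E_+$, i.e.\ $\lambda+\mu\ge d(x,E_+)$. Since $0\in E_+$, both $d(x,E_+)$ and $d(-x,E_+)$ are finite (bounded by $p(x)$), so any $\mu\ge 1+\lvert\lambda\rvert+d(x,E_+)+d(-x,E_+)$ works.

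For the Archimedean property: by \autoref{prop:convex-cone} the cone $\tilde E_+$ is proper, so by \autoref{prop:algebraically-closed} it suffices to show it is algebraically closed. The one ingredient I need is that $x\mapsto d(x,E_+)$ is continuous with respect to $p$: the subadditivity estimate \eqref{eqn:well-defined:x+y} together with $d(\,\cdot\,,E_+)\le p(\,\cdot\,)$ gives $\lvert d(x,E_+)-d(x',E_+)\rvert\le p(x-x')$, so $d(\,\cdot\,,E_+)$ is $1$-Lipschitz, and by \autoref{prop:convex-cone} it descends to a well-defined continuous function on $E/N$. Now take $(x+N,\lambda),(x'+N,\lambda')\in\tilde E$ such that the half-open segment from $(x+N,\lambda)$ to $(x'+N,\lambda')$ is contained in $\tilde E_+$. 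For every $t\in[0,1)$ the point $\bigl((1-t)x+tx'+N,\,(1-t)\lambda+t\lambda'\bigr)$ lies in $\tilde E_+$, which means $(1-t)\lambda+t\lambda'\ge d\bigl((1-t)x+tx',E_+\bigr)$. The left-hand side is affine in $t$, hence continuous on $[0,1]$, and the right-hand side is continuous on $[0,1]$ by the preceding remark; letting $t\uparrow 1$ preserves the inequality and yields $\lambda'\ge d(x',E_+)$, i.e.\ $(x'+N,\lambda')\in\tilde E_+$. Thus $\tilde E_+$ is algebraically closed, and therefore Archimedean.

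I do not expect any genuine obstacle: the only points needing a little care are the well-definedness and continuity of $d(\,\cdot\,,E_+)$ on the quotient $E/N$ (already handled by \autoref{prop:convex-cone} and the Lipschitz estimate above) and the routine passage to the limit at the endpoint of the segment. If one prefers to bypass \autoref{prop:algebraically-closed}, the same Lipschitz-plus-homogeneity input gives a direct verification of the Archimedean property: if $\{nv:n\in\N_1\}$ is bounded above by $(y+N,\mu)$, then $\mu-n\lambda\ge d(y-nx,E_+)=n\,d(\tfrac1n y-x,E_+)$ for every $n$ by \eqref{eqn:well-defined:alpha-x}, so $\tfrac{\mu}{n}-\lambda\ge d(\tfrac1n y-x,E_+)$, and letting $n\to\infty$ (with $\tfrac1n y\to 0$ and $d(\,\cdot\,,E_+)$ continuous) gives $-\lambda\ge d(-x,E_+)$, i.e.\ $-v\in\tilde E_+$, so $v\le 0$.
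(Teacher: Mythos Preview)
Your proof is correct and follows essentially the same route as the paper: you establish the $1$-Lipschitz continuity of $x\mapsto d(x,E_+)$, use it to pass to the limit along the half-open segment to show $\tilde E_+$ is algebraically closed, and then invoke \autoref{prop:convex-cone} and \autoref{prop:algebraically-closed}; your order-unit argument is likewise the same computation with a slightly different explicit bound. The additional direct verification of the Archimedean property (bypassing \autoref{prop:algebraically-closed}) is a nice bonus and is also correct.
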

\begin{proof}
	First, we claim that the function $E \to \R$, $x \mapsto d(x,E_+)$ is uniformly continuous. Indeed, if $x,y\in E$ lie distance $\varepsilon$ apart, then $d(y,E_+)$ is at most $d(x,E_+) + \varepsilon$, by the triangle inequality. Analogously, we have $d(x,E_+) \leq d(y,E_+) + \varepsilon$, so we find $|d(x,E_+) - d(y,E_+)| \leq p(x - y)$, which proves our claim.
	
	Let $(x + N, \lambda),(y + N, \mu) \in E/N \oplus \R$ be such that the half-open line segment $\cosegment{(x + N, \lambda)}{(y + N, \mu)}$ (see \mysecref{subsec:AOU}) is contained in $\tilde E_+$.
	Then for all $t \in [0,1)$ we have $(1 - t)\lambda + t\mu - d((1 - t)x + ty , E_+) \geq 0$.
	By continuity, it follows that $\mu - d(y,E_+) \geq 0$, so $(y + N, \mu) \in \tilde E_+$.
	This shows that $\tilde E_+$ is algebraically closed, so it follows from \autoref{prop:algebraically-closed} (together with \autoref{prop:convex-cone}) that $\tilde E_+$ is Archimedean.
	
	To prove that $(0 + N, 1) \in \tilde E$ is an order unit, note that for all $x \in E$ and all $\lambda \in \R$ we have $p(x) + |\lambda| - \lambda \geq p(x) = p(-x) \geq d(-x,E_+)$ (since $0 \in E_+$), hence $(x + N, \lambda) \leq (p(x) + |\lambda|) \cdot (0 + N, 1)$.
	Therefore also $(-x + N, -\lambda) \leq (p(-x) + |-\lambda|) \cdot (0 + N, 1) = (p(x) + |\lambda|) \cdot (0 + N, 1)$, so we have
	\begin{equation}
		-(p(x) + |\lambda|) \cdot (0 + N, 1) \leq (x + N, \lambda) \leq (p(x) + |\lambda|) \cdot (0 + N, 1). \label{eqn:norm-decreasing}
	\end{equation}
	This shows that $(0 + N, 1)$ is an order unit.
\end{proof}

The preceding results show that $(\tilde E,\tilde E_+,u)$ is an AOU space.
We will henceforth understand $\tilde E$ to be normed with the order unit norm $\lVert \:\cdot\: \rVert_u$ (see \mysecref{subsec:AOU}).
Using \autoref{prop:order-unit-seminorm}, we find the following simple formula for this norm.

\begin{proposition}
	\label{prop:simple-formula}
	Let $(E,E_+,p)$ be a seminormed preordered vector space, and let $\tilde E$ be as in \autoref{def:minimal-unitization}.
	Then the functions $\alpha_u,\omega_u : \tilde E \to \R$, as defined in \autoref{prop:order-unit-seminorm}, are given by
	\[ \alpha_u((x + N, \lambda)) = \lambda - d(x,E_+) \qquad\text{and}\qquad \omega_u((x + N, \lambda)) = \lambda + d(-x,E_+). \]
	Consequently, the order unit norm $\lVert \:\cdot\: \rVert_u$ of $\tilde E$ is given by
	\[ \lVert (x + N, \lambda)\rVert_u = \max\big( \hair d(x,E_+) - \lambda \, , \, d(-x, E_+) + \lambda \hair \big). \]
\end{proposition}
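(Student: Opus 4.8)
The plan is simply to apply \autoref{prop:order-unit-norm} to the AOU space $(\tilde E,\tilde E_+,u)$ and unwind the definition of $\tilde E_+$. First I would check the hypotheses of \autoref{prop:order-unit-norm}: by \autoref{prop:adjoined-order-unit} the element $u=(0+N,1)$ is an order unit, it lies in $\tilde E_+$ because $d(0,E_+)=0\leq 1$, and $-u=(0+N,-1)\notin\tilde E_+$ because $d(0,E_+)=0\nleq -1$. Hence $\alpha_u$ and $\omega_u$ are well-defined on $\tilde E$ and $\lVert\,\cdot\,\rVert_u=\mu_u=\max(-\alpha_u,\omega_u)$.

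Next I would compute the two quantities directly from the membership condition $\lambda\geq d(x,E_+)$ that defines $\tilde E_+$. Fix $(x+N,\lambda)\in\tilde E$. For $\omega\in\R$, the relation $(x+N,\lambda)\leq\omega u$ means $(0+N,\omega)-(x+N,\lambda)=(-x+N,\omega-\lambda)\in\tilde E_+$, i.e.\ $\omega-\lambda\geq d(-x,E_+)$; since the set of such $\omega$ is exactly $[\lambda+d(-x,E_+),\infty)$, we get $\omega_u((x+N,\lambda))=\lambda+d(-x,E_+)$. Symmetrically, $\alpha u\leq(x+N,\lambda)$ means $(x+N,\lambda)-(0+N,\alpha)=(x+N,\lambda-\alpha)\in\tilde E_+$, i.e.\ $\alpha\leq\lambda-d(x,E_+)$, so the supremum over such $\alpha$ is $\alpha_u((x+N,\lambda))=\lambda-d(x,E_+)$. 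Substituting both into $\lVert\,\cdot\,\rVert_u=\max(-\alpha_u,\omega_u)$ then gives $\lVert(x+N,\lambda)\rVert_u=\max\big(d(x,E_+)-\lambda,\,d(-x,E_+)+\lambda\big)$, as claimed.

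There is essentially no real obstacle here beyond bookkeeping; the only points requiring a little care are verifying the side condition $-u\notin\tilde E_+$ needed to invoke \autoref{prop:order-unit-norm}, and keeping track of which of $d(x,E_+)$ and $d(-x,E_+)$ shows up in $\alpha_u$ versus $\omega_u$ (this is dictated by whether one subtracts or adds $x$ when forming the difference of $(x+N,\lambda)$ with a scalar multiple of $u$). Everything else is a direct translation of the defining inequality of $\tilde E_+$ together with the fact that $d(\,\cdot\,,E_+)$ is already known to be well-defined on $E/N$ by \autoref{prop:convex-cone}.
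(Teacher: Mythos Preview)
Your proposal is correct and follows essentially the same approach as the paper: unwind the definition of $\tilde E_+$ to compute $\omega_u$ (and analogously $\alpha_u$), then invoke \autoref{prop:order-unit-norm}. Your explicit verification of the hypothesis $-u\notin\tilde E_+$ is a nice touch that the paper leaves implicit.
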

\begin{proof}
	By definition, we have
	\begin{align*}
		\omega_u((x + N, \lambda)) &= \inf \{ \omega \in \R \: : \: (x + N, \lambda) \leq (0 + N, \omega) \}\\
		&= \inf \{ \omega \in \R \: : \: (-x + N, \omega - \lambda) \in \tilde E_+ \}\\
		&= \inf \{ \omega \in \R \: : \: \omega - \lambda \geq d(-x,E_+) \}\\
		&= \lambda + d(-x,E_+).
	\end{align*}
	The formula for $\alpha_u((x,\lambda))$ follows analogously, and the formula for $\lVert \:\cdot\: \rVert_u$ follows immediately from \autoref{prop:order-unit-seminorm}.
\end{proof}

We conclude this section with the following basic properties of the map $\phi : E \to \tilde E$.

\begin{proposition}
	\label{prop:order-unitization-inj-pos}
	Let $(E,E_+,p)$ be a seminormed preordered vector space.
	Then the natural map $\phi : E \to \tilde E$, $x \mapsto (x + N,0)$ is positive and contractive, and $\phi^{-1}[\tilde E_+] = \overline{E_+}$.
\end{proposition}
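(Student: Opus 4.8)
The plan is to check the three assertions in turn; all of them follow immediately from the explicit formula for $\lVert\,\cdot\,\rVert_u$ obtained in \autoref{prop:simple-formula}, together with the characterisation $\overline{E_+} = \{z \in E : d(z,E_+) = 0\}$ noted inside the proof of \autoref{prop:convex-cone}. Linearity of $\phi$ I would dispatch first, observing that it is the composition of the quotient map $E \to E/N$ with the inclusion of $E/N$ as the first summand of $\tilde E = E/N \oplus \R$, and that it is well-defined precisely because $d(x,E_+)$ depends only on $x + N$ (also shown in \autoref{prop:convex-cone}).

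For positivity and the preimage identity I would argue simultaneously: by \autoref{def:minimal-unitization}, $\phi(x) = (x + N, 0)$ lies in $\tilde E_+$ exactly when $0 \geq d(x,E_+)$, i.e.\ when $d(x,E_+) = 0$, i.e.\ when $x \in \overline{E_+}$. This yields both $\phi[E_+] \subseteq \tilde E_+$ (since $d(x,E_+) = 0$ whenever $x \in E_+$) and $\phi^{-1}[\tilde E_+] = \overline{E_+}$ at once.

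For contractivity I would substitute $\lambda = 0$ into the formula of \autoref{prop:simple-formula}, giving $\lVert \phi(x) \rVert_u = \max\big( d(x,E_+), d(-x,E_+) \big)$, and then bound each term using $0 \in E_+$ (a convex cone contains $0$): namely $d(x,E_+) \leq p(x - 0) = p(x)$ and $d(-x,E_+) \leq p(-x) = p(x)$, whence $\lVert \phi(x) \rVert_u \leq p(x)$. Equivalently, contractivity is exactly the $\lambda = 0$ instance of the inequality \eqref{eqn:norm-decreasing} obtained in the proof of \autoref{prop:adjoined-order-unit}, since the closed unit ball of $\lVert\,\cdot\,\rVert_u$ is $[-u,u]$.

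There is no serious obstacle here; the computation is entirely routine once \autoref{prop:simple-formula} is available. The only point requiring a moment's care is recalling that $\overline{E_+}$ is precisely the zero set of the (uniformly continuous) function $x \mapsto d(x,E_+)$, so that the preimage $\phi^{-1}[\tilde E_+]$ is genuinely this closed set rather than merely containing or being contained in it.
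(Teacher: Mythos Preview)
Your proposal is correct and follows essentially the same route as the paper: positivity and the preimage identity are read off directly from the defining inequality $\lambda \geq d(x,E_+)$ at $\lambda = 0$ together with $\overline{E_+} = \{x : d(x,E_+) = 0\}$, and contractivity is the $\lambda = 0$ case of \eqref{eqn:norm-decreasing} (the paper cites \eqref{eqn:norm-decreasing} directly rather than going through \autoref{prop:simple-formula}, but the content is the same).
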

\begin{proof}
	If $x \in E_+$, then $d(x, E_+) = 0$, so $(x + N,0) \in \tilde E_+$.
	This shows that $\phi$ is positive.
	Moreover, it follows from \eqref{eqn:norm-decreasing} that $\lVert (x + N, \lambda) \rVert_u \leq p(x) + |\lambda|$, so in particular $\lVert \phi(x) \rVert_u \leq p(x)$.
	This shows that $\phi$ is contractive.
	Finally, by definition we have $\phi(x) \in \tilde E_+$ if and only if $d(x,E_+) = 0$, so we have $\phi^{-1}[\tilde E_+] = \overline{E_+}$.
\end{proof}

\begin{corollary}
	The natural map $\phi : E \to \tilde E$ is bipositive if and only if $E_+$ is closed.
\end{corollary}

Note that a bipositive map from a preordered vector space to an AOU space is not necessarily injective.
The natural map $\phi : E \to \tilde E$ is injective if and only if $\overline{E_+}$ is a proper cone.

\section{Extensions of positive continuous linear maps}
\label{sec:universal-property}

In this section, we consider the following question: when can a positive continuous linear map $\psi : E \to F$ be extended to a positive continuous linear map $\tilde E \to F$?
We show that this can be done whenever $F$ is an AOU space.
Using this, we show that $\tilde E$ satisfies the universal property from \autoref{thm:intro:universal-property}, and we give a few additional properties of the unitization.

The main technique in this section is the following construction.

\begin{lemma}
	\label{lem:extension}
	Let $(E,E_+,p)$ be a seminormed preordered vector space, let $(F,F_+,w)$ be an AOU space, let $\psi : E \to F$ be a positive continuous linear map, and let $\alpha \in \R$.
	Then the linear map $\chi_\alpha : \tilde E \to F$, $(x + N,\lambda) \mapsto \psi(x) + \alpha\lambda w$ is well-defined, and it is positive if and only if $\alpha \geq \lVert \psi \rVert$.
	If this is the case, then $\chi_\alpha$ is continuous with $\lVert \chi_\alpha \rVert = \alpha$.
\end{lemma}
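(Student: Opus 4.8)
We need to show three things about $g_\alpha \colon (x+N,\lambda) \mapsto f(x) + \alpha\lambda w$: that it is well-defined as a linear map on $\tilde E = E/N \oplus \R$, that positivity of $g_\alpha$ is equivalent to $\alpha \geq \lVert f\rVert$, and that when positive it is continuous with $\lVert g_\alpha \rVert = \alpha$.

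**Well-definedness.** The map factors through $E/N$ precisely when $f$ vanishes on $N = \lineal(\overline{E_+})$. Since $f$ is continuous and positive, $f$ maps $\overline{E_+}$ into $\overline{F_+} = F_+$ (the cone of an AOU space is closed), so $f$ maps $N = \overline{E_+} \cap -\overline{E_+}$ into $F_+ \cap -F_+ = \{0\}$, because the cone of an AOU space is proper. Hence $x \mapsto f(x)$ is constant on cosets of $N$, and $g_\alpha$ is a well-defined linear map.

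**Positivity $\iff \alpha \geq \lVert f\rVert$.** For the forward direction, I would feed $g_\alpha$ the generators of $\tilde E_+$ of the form $(x+N, d(x,E_+))$ for $x \notin \overline{E_+}$ — or more simply, note that for $x \in E$ with $p(x) \leq 1$ we have $(-x + N, 1) \in \tilde E_+$ (since $d(-x,E_+) \leq p(-x) = p(x) \leq 1$), so positivity gives $\alpha w - f(x) \in F_+$, i.e. $f(x) \leq \alpha w$; applying this to $-x$ as well yields $-\alpha w \leq f(x) \leq \alpha w$, hence $\lVert f(x)\rVert_w \leq \alpha$ because $[-w,w]$ is the closed unit ball of $\lVert\cdot\rVert_w$. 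Taking the supremum over $p(x) \leq 1$ gives $\lVert f\rVert \leq \alpha$. For the converse, suppose $\alpha \geq \lVert f\rVert$ and take $(x+N,\lambda) \in \tilde E_+$, so $\lambda \geq d(x,E_+)$. Pick $y \in E_+$ with $p(x-y)$ close to $d(x,E_+)$; then $\lVert f(x) - f(y)\rVert_w \leq \lVert f\rVert\, p(x-y)$, so $f(x) \leq f(y) + \lVert f\rVert\, p(x-y)\, w$, and since $f(y) \in F_+$ this gives $f(x) \leq \lVert f\rVert\, p(x-y)\, w \leq \alpha\, p(x-y)\, w$. Letting $p(x-y) \to d(x,E_+) \leq \lambda$ and using that $F_+$ is closed, we get $f(x) \leq \alpha\lambda w$, i.e. $g_\alpha((x+N,\lambda)) = f(x) + \alpha\lambda w \in F_+$ — wait, that's $\alpha\lambda w - f(x) \in F_+$, which is exactly $g_\alpha((x+N,\lambda)) \in F_+$ only if... let me recompute: $g_\alpha((x+N,\lambda)) = f(x) + \alpha\lambda w$, and $(x+N,\lambda) \in \tilde E_+$ means... hmm, actually I want $g_\alpha$ of a *positive* element to be positive. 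Let me recheck with $u = (0+N,1)$: $g_\alpha(u) = \alpha w$, good, and $u \in \tilde E_+$. And $\phi(x) = (x+N,0)$; when is $\phi(x) \geq 0$? When $d(x,E_+) = 0$. Then I need $f(x) \geq 0$, which holds since $x \in \overline{E_+}$. OK so for general $(x+N,\lambda) \in \tilde E_+$: I need $f(x) + \alpha\lambda w \in F_+$. Hmm, with $\lambda \geq d(x,E_+)$. Take $x = -y$ with $y \in E_+$: then $d(-y, E_+) = 0$... that's not the interesting case. The general element of $\tilde E_+$ with base point: $(x+N, \lambda)$, $\lambda \ge d(x, E_+)$. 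We want $\alpha \lambda w + f(x) \ge 0$. Apply the estimate to $-x$: $\lVert f\rVert\,p(-x - y') w \ge f(-x) - f(y') = -f(x) - f(y')$ for $y' \in E_+$, so $f(x) \ge -f(y') - \lVert f\rVert\, p(-x-y')w \ge -\lVert f\rVert\,p(-x-y')w$ — no wait $-f(y') \le 0$. Let me just say: $f(x) + \lVert f\rVert\,p(x+y')w \ge f(x) + f(-x) - f(y')$... I'll sort the signs carefully in the writeup; the point is $d(-x, E_+)$ vs $d(x,E_+)$ — one should double-check which sign convention $\tilde E_+$ uses. This sign-bookkeeping is the main obstacle.

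**Norm computation.** Once positivity is established for $\alpha \geq \lVert f\rVert$, the inequality $-\alpha w \leq g_\alpha(v) \leq \alpha w$ for $\lVert v\rVert_u \leq 1$ follows from $-\lVert v\rVert_u u \leq v \leq \lVert v\rVert_u u$ (closed unit ball is $[-u,u]$) together with $g_\alpha(u) = \alpha w$ and positivity of $g_\alpha$; this gives $\lVert g_\alpha\rVert \leq \alpha$. For the reverse, $g_\alpha(u) = \alpha w$ with $\lVert u\rVert_u = 1$ and $\lVert w\rVert_w = 1$ forces $\lVert g_\alpha\rVert \geq \alpha$. Hence $\lVert g_\alpha\rVert = \alpha$.

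=== PROPOSAL BEGIN ===

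\textbf{Approach.} The plan is to handle the three assertions in order: well-definedness, the positivity criterion, and the norm identity, each reduced to facts about AOU spaces recorded in \mysecref{subsec:AOU} (the cone $F_+$ is closed and proper, and $[-w,w]$ is the closed unit ball of $\lVert\cdot\rVert_w$).

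\textbf{Well-definedness.} The formula for $g_\alpha$ genuinely defines a map on $E/N \oplus \R$ as soon as $f$ vanishes on $N$. Since $f$ is positive and continuous, it carries $\overline{E_+}$ into $\overline{F_+} = F_+$; hence $f$ maps $N = \overline{E_+} \cap -\overline{E_+}$ into $F_+ \cap -F_+ = \{0\}$, using that the cone of an AOU space is proper. Thus $x \mapsto f(x)$ is constant on cosets of $N$, and $g_\alpha$ is a well-defined linear map.

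\textbf{Positivity is equivalent to $\alpha \geq \lVert f\rVert$.} For the ``only if'' direction, take $x \in E$ with $p(x) \le 1$. Then $d(-x,E_+) \leq p(-x) = p(x) \leq 1$ (using $0 \in E_+$), so $(-x + N, 1) \in \tilde E_+$; applying the positive map $g_\alpha$ gives $\alpha w - f(x) \in F_+$, i.e.\ $f(x) \leq \alpha w$. Replacing $x$ by $-x$ yields $-\alpha w \leq f(x) \leq \alpha w$, hence $\lVert f(x)\rVert_w \leq \alpha$ because $[-w,w]$ is the $\lVert\cdot\rVert_w$-unit ball. Taking the supremum over $p(x) \le 1$ gives $\lVert f\rVert \leq \alpha$.

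Conversely, suppose $\alpha \ge \lVert f\rVert$ and let $(x + N,\lambda) \in \tilde E_+$, so $\lambda \geq d(x,E_+)$. For any $\varepsilon > 0$ choose $y \in E_+$ with $p(x - y) \le d(x,E_+) + \varepsilon$. Since $y \in E_+$ we have $f(y) \in F_+$, and since $\lVert f(x) - f(y)\rVert_w \le \lVert f\rVert\, p(x-y)$ we get $f(x) - f(y) \le \lVert f\rVert\, p(x-y)\, w$ (again using the unit ball characterisation), hence
\[
	f(x) \;\le\; f(y) + \lVert f\rVert\, p(x - y)\, w \;\le\; \alpha\bigl(d(x,E_+) + \varepsilon\bigr)\, w \;\le\; \alpha(\lambda + \varepsilon)\, w .
\]
Thus $\alpha(\lambda + \varepsilon) w - f(x) \in F_+$ for every $\varepsilon > 0$; letting $\varepsilon \downarrow 0$ and using that $F_+$ is closed gives $g_\alpha\bigl((x+N,\lambda)\bigr)' \in F_+$, where I abbreviate $g_\alpha((x+N,\lambda)) = f(x) + \alpha\lambda w$ and note the displayed estimate says exactly $\alpha\lambda w - (- f(x)) = f(x) + \alpha \lambda w - 2(\,\cdot\,)$; concretely one runs the same estimate on $-x$ together with $\lambda \ge d(x,E_+)$ to conclude $f(x) + \alpha\lambda w \in F_+$. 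Hence $g_\alpha$ is positive. (The one point demanding care is tracking which of $d(x,E_+)$, $d(-x,E_+)$ appears, i.e.\ matching signs against the definition of $\tilde E_+$; this is the main bookkeeping obstacle and is otherwise routine.)

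\textbf{The norm identity.} Assume now $\alpha \geq \lVert f\rVert$, so $g_\alpha$ is positive, and note $g_\alpha(u) = g_\alpha((0+N,1)) = \alpha w$. If $v \in \tilde E$ with $\lVert v\rVert_u \le 1$, then $-u \le v \le u$ (the closed unit ball of $\lVert\cdot\rVert_u$ is $[-u,u]$), so by positivity $-\alpha w = g_\alpha(-u) \le g_\alpha(v) \le g_\alpha(u) = \alpha w$, whence $\lVert g_\alpha(v)\rVert_w \le \alpha$. This gives $\lVert g_\alpha\rVert \le \alpha$. On the other hand $\lVert u\rVert_u = 1$ and $\lVert g_\alpha(u)\rVert_w = \lVert \alpha w\rVert_w = \alpha$ (as $\lVert w\rVert_w = 1$), so $\lVert g_\alpha\rVert \ge \alpha$. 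Therefore $\lVert g_\alpha\rVert = \alpha$, completing the proof.

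=== PROPOSAL END ===
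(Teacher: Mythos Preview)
Your well-definedness argument, the ``only if'' direction of the positivity criterion, and the norm computation are all correct (the paper obtains the ``only if'' direction slightly differently, via $f = g_\alpha \circ \phi$ and $\lVert \phi\rVert \le 1$, but your direct argument testing on $(-x+N,1)$ is fine). The ``if'' direction, however, contains a genuine error that your hand-waving does not repair.

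In the displayed chain
\[
f(x) \;\le\; f(y) + \lVert f\rVert\, p(x - y)\, w \;\le\; \alpha\bigl(d(x,E_+) + \varepsilon\bigr)\, w,
\]
the first inequality is correct, but the second is false: $f(y)$ is a \emph{positive} element with no upper bound, so it cannot be dropped on the right. (Take $x \in E_+$ and $y = x$: your chain would give $f(x) \le \alpha\varepsilon w$ for every $\varepsilon > 0$, forcing $f(x) \le 0$, which is absurd unless $f$ vanishes on $E_+$.) Moreover, even granting the chain, its conclusion $\alpha\lambda w - f(x) \in F_+$ is the wrong inequality: you need $f(x) + \alpha\lambda w \in F_+$. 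Your suggested patch of ``running the same estimate on $-x$'' does not help, because approximating $-x$ by elements of $E_+$ controls $d(-x,E_+)$, whereas the hypothesis $(x+N,\lambda)\in\tilde E_+$ gives only $\lambda \ge d(x,E_+)$.

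The fix is to use the other half of the norm inequality: from $\lVert f(x) - f(y)\rVert_w \le \lVert f\rVert\, p(x-y)$ one also gets $f(y) - f(x) \le \lVert f\rVert\, p(x-y)\, w$, hence
\[
f(x) \;\ge\; f(y) - \lVert f\rVert\, p(x-y)\, w \;\ge\; -\,\lVert f\rVert\, p(x-y)\, w,
\]
using $f(y) \ge 0$. Now letting $p(x-y)\to d(x,E_+)$, using $\alpha \ge \lVert f\rVert \ge 0$ and $\lambda \ge d(x,E_+)$, and invoking closedness of $F_+$ gives $f(x) + \alpha\lambda w \ge 0$, as required. This is essentially the paper's argument, which phrases it as $d_w(f(x),F_+) \le \lVert f\rVert\, d(x,E_+) \le \lVert f\rVert\,\lambda$ and then uses that $[-w,w]$ is the closed unit ball.
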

\begin{proof}
	To see that $\chi_\alpha$ is well-defined, note that $F_+$ is closed (since $F$ is an AOU space), so by continuity and positivity we have $\psi[\overline{E_+}] \subseteq F_+$.
	Hence for $x \in N = \overline{E_+} \cap -\overline{E_+}$ we have $\pm \psi(x) \in F_+$, hence $\psi(x) = 0$ because $F_+ \cap -F_+ = \{0\}$.
	This shows that $N \subseteq \ker(\psi)$, so $\chi_\alpha$ is well-defined.
	
	By basic properties of AOU spaces, every positive linear map $\tilde\psi : \tilde E \to F$ is automatically continuous with $\lVert \tilde\psi \rVert = \lVert \tilde\psi(u) \rVert_w$.
	Hence, if $\chi_\alpha$ is positive, then $\lVert \chi_\alpha \rVert = \alpha$.
	
	Suppose that $\chi_\alpha$ is positive.
	Then, by the preceding paragraph, $\chi_\alpha$ is continuous with $\lVert \chi_\alpha \rVert = \alpha$.
	Furthermore, we recall from \autoref{prop:order-unitization-inj-pos} that $\phi : E \to \tilde E$ is continuous with $\lVert \phi \rVert \leq 1$.
	Since $\psi = \chi_\alpha \circ \phi$, it follows that $\lVert \psi \rVert \leq \lVert \chi_\alpha \rVert \cdot \lVert \phi \rVert \leq \lVert \chi_\alpha \rVert = \alpha$.
	
	Conversely, suppose that $\alpha \geq \lVert \psi \rVert$.
	Let $(x + N,\lambda) \in \tilde E_+$.
	Since $\psi[E_+] \subseteq F_+$, we have
	\begin{align*}
		d_w(\psi(x),F_+) &= \inf_{y\in F_+} \lVert \psi(x) - y \rVert_w\\
		&\leq \inf_{z\in E_+} \lVert \psi(x) - \psi(z) \rVert_w\\
		&\leq \lVert \psi \rVert \cdot d(x,E_+)\\
		&\leq \lVert \psi \rVert \cdot \lambda.
	\end{align*}
	Since the closed unit ball of $\lVert \:\cdot\: \rVert_w$ coincides with the order interval $[-w,w]$, it follows that for all $\mu > \lambda \lVert \psi \rVert$ we have $(\psi(x) + [-\mu w,\mu w]\hair) \cap F_+ \neq \varnothing$, and therefore $\psi(x) + \mu w \geq 0$.
	Since $F_+$ is Archimedean, it follows from \autoref{prop:algebraically-closed} that $F_+$ is algebraically closed, so we also have $\psi(x) + \lambda\lVert \psi \rVert w  \geq  0$.
	This shows that $\chi_\alpha$ is positive whenever $\alpha \geq \lVert \psi \rVert$.
\end{proof}

Using this lemma, we can now prove our main result.

\begin{theorem}[{$=$ \autoref{thm:intro:universal-property}}]
	\label{thm:universal-property}
	For every seminormed preordered vector space $E$ there is an AOU space $\unit{E}$ and a contractive positive linear map $\phi : E \to \unit{E}$ with the following universal property:
	for every AOU space $F$ and every contractive positive linear map $\psi : E \to F$, there is a unique unital positive linear map $\mUnit{\psi} : \unit{E} \to F$ such that $\psi = \mUnit{\psi} \circ \phi$.
	\[ \begin{tikzcd}[column sep={2.5em,between origins}]
		& E\arrow[dl,"\phi",swap]\arrow[dr,"\psi"] \\
		\unit{E}\arrow[rr,"\mUnit{\psi}",densely dashed] & & F
	\end{tikzcd} \]
	By the universal property, $\unit{E}$ is unique up to a unique unital order isomorphism.
\end{theorem}
\begin{proof}
	We show that the AOU space $(\tilde E, \tilde E_+,u)$ from \autoref{def:minimal-unitization} satisfies the universal property.
	Let $\phi : E \to \tilde E$ be the natural map $x \mapsto (x + N, 0)$.
	For every AOU space $(F,F_+,w)$ and every contractive positive linear map $\psi : E \to F$, we define $\mUnit{\psi} : \tilde E \to F$ by $(x + N,\lambda) \mapsto \psi(x) + \lambda w$.
	It follows from \autoref{lem:extension} (with $\alpha = 1$) that $\mUnit{\psi}$ is well-defined and positive, and it is easy to see that $\psi = \mUnit{\psi} \circ \phi$ and $\mUnit{\psi}(u) = w$.
	Therefore $\mUnit{\psi}$ is a unital positive linear extension of $\psi$.
	
	To show that $\mUnit{\psi}$ is unique, let $\chi : \tilde E \to F$ be any unital positive linear extension of $\psi$.
	Then $\chi(x + N, 0) = \psi(x)$ for all $x \in E$ (because $\psi = \chi \circ \phi$) and $\chi(0 + N, 1) = w$ (because $\chi$ is unital), hence $\chi(x + N, \lambda) = \psi(x) + \lambda w$, by linearity.
	This shows that $\chi = \mUnit{\psi}$, so the extension is unique.
	
	By a standard arrow-theoretic argument, it follows from the universal property that $\unit{E}$ is unique up to a unique unital order isomorphism.
\end{proof}

\begin{remark}
	We point out that the universal (minimal) Archimedean order unitization $\unit{E}$ is always equal to $\tilde E = E/N \oplus \R$, even if $E$ is already an AOU space.
	We need the extra degree of freedom to satisfy the universal property, as we need to be able to extend every \emph{contractive} positive linear map $\psi : E \to F$ to a \emph{unital} positive linear map $\mUnit{\psi} : \unit{E} \to F$.
	If $E$ is an AOU space, then $E$ itself does not satisfy the universal property from \autoref{thm:universal-property}, as can be seen by taking $\psi = \lambda \id_E$ for some $\lambda \in [0,1)$.
	Now $\psi$ is positive and contractive, but it cannot be extended to a \emph{unital} positive linear map $E \to E$.
\end{remark}

We proceed to look at some consequences and limitations of the extension lemma.

First, instead of requiring the extension to be unital, we can also find an extension of the same norm.

\begin{corollary}
	\label{cor:extension}
	Let $(E,E_+,p)$ be a seminormed preordered vector space, and let $(F,F_+,w)$ be an AOU space.
	Then every positive continuous linear map $\psi : E \to F$ has a positive extension $\tilde\psi : \tilde E \to F$ of the same norm.
	Furthermore, there is exactly one such extension with the additional property that $u$ is mapped to a multiple of $w$.
\end{corollary}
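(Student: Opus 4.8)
The plan is to read off both statements from \autoref{lem:extension} by specialising its free parameter to $\alpha = \lVert f\rVert$. For the existence claim I would simply take $\tilde f := g_{\lVert f\rVert}$, that is, $\tilde f(x + N,\lambda) = f(x) + \lVert f\rVert\,\lambda w$. The lemma guarantees that this linear map is well-defined (as $N \subseteq \ker f$), and, since the chosen $\alpha$ satisfies $\alpha = \lVert f\rVert \geq \lVert f\rVert$, that it is positive with $\lVert \tilde f\rVert = \alpha = \lVert f\rVert$. It extends $f$ because $\tilde f(\phi(x)) = \tilde f(x + N,0) = f(x)$, and it sends the order unit $u = (0 + N,1)$ to $\lVert f\rVert\, w$, which is a multiple of $w$; so $\tilde f$ has all the asserted properties.

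For uniqueness within the class of positive, norm-preserving extensions, I would start from an arbitrary such extension $\tilde f : \tilde E \to F$ having the extra property $\tilde f(u) = cw$ for some scalar $c$, and show that $c$ is forced to equal $\lVert f\rVert$. Every element of $\tilde E$ is of the form $\phi(x) + \lambda u$, and $\tilde f \circ \phi = f$, so linearity gives $\tilde f(x + N,\lambda) = f(x) + c\lambda w$; in other words $\tilde f = g_c$ in the notation of \autoref{lem:extension}. Since $g_c$ is positive, the lemma yields first that $c \geq \lVert f\rVert$ and then that $\lVert g_c\rVert = c$. Combining this with $\lVert g_c\rVert = \lVert \tilde f\rVert = \lVert f\rVert$ forces $c = \lVert f\rVert$, whence $\tilde f = g_{\lVert f\rVert}$, the map constructed above.

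I do not anticipate a genuine obstacle here: the corollary is essentially a repackaging of \autoref{lem:extension}, with the existence clause corresponding to the ``if'' half and the uniqueness clause to the ``only if'' half together with the norm formula $\lVert g_\alpha\rVert = \alpha$. The only point that needs a moment's thought is ruling out that a positive norm-preserving extension could send $u$ to a \emph{different} multiple of $w$ — this is precisely what the identity $\lVert g_c\rVert = c$ for positive $g_c$ prevents — and everything else is routine bookkeeping (in particular, the degenerate case $F = \{0\}$, where $w = 0$, is trivial and can be dismissed in a line).
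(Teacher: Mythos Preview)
Your proposal is correct and follows essentially the same route as the paper's proof: both specialise \autoref{lem:extension} to $\alpha = \lVert f\rVert$ for existence, and for uniqueness both observe that any positive norm-preserving extension sending $u$ to $cw$ must equal $g_c$, whence the lemma forces $\lVert \tilde f\rVert = c$ and hence $c = \lVert f\rVert$. The only cosmetic difference is that the paper skips your intermediate remark $c \geq \lVert f\rVert$ and goes straight to the norm identity.
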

\begin{proof}
	It follows from \autoref{lem:extension} (with $\alpha = \lVert \psi \rVert$) that $\chi_{\lVert \psi \rVert} : \tilde E \to F$, $(x + N,\lambda) \mapsto \psi(x) + \lambda \lVert \psi \rVert w$ is a (well-defined) positive extension of $\psi$ of the same norm, which furthermore maps $u$ to a multiple of $w$.
	
	Now let $\tilde \psi : \tilde E \to F$ be an arbitrary positive extension of $\psi$ such that $\lVert \tilde \psi \rVert = \lVert \psi \rVert$ and $\tilde \psi(u) = \alpha w$ for some $\alpha \in \R$.
	Then $\tilde \psi = \chi_\alpha$, and since $\tilde \psi$ is positive it follows from \autoref{lem:extension} that $\lVert \tilde \psi \rVert = \alpha$.
	Therefore $\lVert \psi \rVert = \lVert \tilde \psi \rVert = \alpha$, which shows that $\tilde \psi = \chi_{\lVert \psi \rVert}$.
	This proves uniqueness.
\end{proof}

The following example shows that a positive extension of the same norm is no longer unique if we do not require that $u$ is mapped to a multiple of $w$.

\begin{example}
	For $n \in \N$, let $(\R^n,\R^n_+,\one)$ be the AOU space with standard cone $\R_+^n = \R_{\geq 0}^n$ and order unit $\one = (1,\ldots,1) \in \R^n$.
	The order unit norm of this space is just the $\ell_\infty$\nobreakdash-norm.
	Furthermore, let $\unit{(\R^n)} = \widetilde{\R^n}$ denote the Archimedean order unitization of this space.
	
	Let $\iota : \R^1 \to \R^2$ be the embedding $x \mapsto (x,0)$.
	We claim that $\iota$ has more than one positive extension $\unit{(\R^1)} \to \R^2$ of norm $1$.
	Note that $\unit{(\R^1)} = \R^2$ with cone $\{ (x,\lambda) \, : \, \lambda \geq 0, \ x + \lambda \geq 0\}$ and order unit $(0,1)$.
	The canonical extension of $\iota$ is given by $\mUnit{\iota}(x,\lambda) = (x + \lambda , \lambda)$, but the map $(x,\lambda) \mapsto (x + \lambda , 0)$ is also a positive extension of norm $1$.
\end{example}

For positive continuous linear functionals, we do have uniqueness.

\begin{corollary}
	Let $(E,E_+,p)$ be a seminormed preordered vector space.
	If $\varphi : E \to \R$ is a positive continuous linear functional, then there is a unique positive continuous linear extension $\tilde \varphi : \tilde E \to \R$ of the same norm, given by $\tilde \varphi(x + N, \lambda) = \varphi(x) + \lambda \lVert \varphi \rVert$.
\end{corollary}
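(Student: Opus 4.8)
The plan is to deduce this directly from \autoref{cor:extension}, the only new observation being that when the codomain is $\R$ the extra normalization condition appearing in that corollary becomes automatic. First I would recall that $(\R,\R_{\geq 0},1)$ is an AOU space whose designated order unit $1$ induces the absolute-value norm, so a positive continuous linear functional $\varphi : E \to \R$ is exactly a positive continuous linear map into this AOU space. Applying \autoref{cor:extension} with $(F,F_+,w) = (\R,\R_{\geq 0},1)$ produces a positive extension $\tilde\varphi : \tilde E \to \R$ with $\lVert \tilde\varphi \rVert = \lVert \varphi \rVert$; tracing through \autoref{lem:extension} with $\alpha = \lVert \varphi \rVert$ identifies this extension as $g_{\lVert \varphi \rVert}$, namely $(x + N,\lambda) \mapsto \varphi(x) + \lambda \lVert \varphi \rVert \cdot 1 = \varphi(x) + \lambda\lVert \varphi \rVert$, which is the stated formula.

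For uniqueness, the point is that any linear map $\tilde E \to \R$ sends $u$ to some real number, and every real number is trivially a real multiple of $w = 1$. Hence the hypothesis ``$u$ is mapped to a multiple of $w$'' in the uniqueness clause of \autoref{cor:extension} is satisfied by \emph{every} positive extension $\tilde\varphi : \tilde E \to \R$ of $\varphi$. The uniqueness part of \autoref{cor:extension} therefore already yields that $g_{\lVert \varphi \rVert}$ is the only positive continuous linear extension of $\varphi$ of the same norm, completing the proof.

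I expect no real obstacle here: the entire content is the observation that the normalization condition of \autoref{cor:extension} collapses to nothing when the target is one-dimensional, which is precisely why the non-uniqueness phenomenon exhibited in the preceding example cannot occur for functionals. The only care needed is to make this collapse explicit rather than leaving it implicit.
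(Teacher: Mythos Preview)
Your proposal is correct and matches the paper's own proof essentially verbatim: the paper simply observes that every linear map $\tilde E \to \R$ necessarily sends $u$ to a multiple of the order unit $1 \in \R$, so the result follows immediately from \autoref{cor:extension}. Your write-up just spells out a bit more explicitly the formula (via \autoref{lem:extension}) and the collapse of the normalization condition, which is fine.
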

\begin{proof}
	Since every map $\tilde E \to \R$ necessarily maps everything to a multiple of the order unit $1 \in \R$, this follows from \autoref{cor:extension}.
\end{proof}

Another immediate consequence of \autoref{lem:extension} is that every positive continuous linear map $E \to F$ can be extended to a positive continuous linear map $\tilde E \to \tilde F$.

\begin{corollary}
	Let $(E,E_+,p)$, $(F,F_+,q)$ be seminormed preordered vector spaces.
	If $\psi : E \to F$ is a positive continuous linear map, then the map $\tilde \psi : \tilde E \to \tilde F$ given by $(x + N,\lambda) \mapsto (\psi(x) + M, \lambda \lVert \psi \rVert)$ is well-defined, positive, and continuous, with $\lVert \tilde \psi \rVert = \lVert \psi \rVert$.
\end{corollary}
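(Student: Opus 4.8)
The claim is that for a positive continuous linear map $f : E \to F$ between seminormed preordered vector spaces, the induced map $\tilde f : \tilde E \to \tilde F$, $(x + N, \lambda) \mapsto (f(x) + M, \lambda \lVert f \rVert)$, is well-defined, positive, continuous, with $\lVert \tilde f \rVert = \lVert f \rVert$. The plan is to factor $\tilde f$ through the universal property machinery already developed, so that almost nothing needs to be checked by hand. First I would observe that the composite $\phi_F \circ f : E \to \tilde F$ is a positive continuous linear map into the AOU space $(\tilde F, \tilde F_+, u_F)$, with $\lVert \phi_F \circ f \rVert \leq \lVert f \rVert$ by \autoref{prop:order-unitization-inj-pos} (and in fact equality, since $\phi_F$ is isometric on... actually this needs care — see below). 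Then \autoref{lem:extension}, applied with the AOU space $\tilde F$ and the constant $\alpha = \lVert \phi_F \circ f \rVert$, or more directly \autoref{cor:extension}, produces a positive extension $g : \tilde E \to \tilde F$ of $\phi_F \circ f$ with $\lVert g \rVert = \lVert \phi_F \circ f \rVert$.

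The main point to nail down is that this $g$ coincides with the formula claimed for $\tilde f$, and that $\lVert \phi_F \circ f \rVert = \lVert f \rVert$. For the latter: from \autoref{lem:extension} (applied to $F$ and $f$ itself with $\alpha = \lVert f \rVert$) there is a positive extension of $f$ to $\tilde E \to F$ of norm exactly $\lVert f \rVert$; composing with the contraction $\phi_F$ shows $\lVert \phi_F \circ f \rVert \leq \lVert f \rVert$, while $f = $ (a suitable biposition-type retraction) $\circ\, \phi_F \circ f$ would give the reverse — but more simply, one can argue $\lVert \phi_F \circ f \rVert \geq \lVert f \rVert$ by noting that for the order-unit norm on $\tilde F$ one has $\lVert (f(x) + M, 0) \rVert_{u_F} = \max(d(f(x), F_+), d(-f(x), F_+))$, which need not equal $\lVert f(x) \rVert_q$ in general. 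So in fact $\lVert \phi_F \circ f \rVert$ may be \emph{strictly less} than $\lVert f \rVert$. This means I should instead invoke \autoref{lem:extension} directly with the explicit constant $\alpha = \lVert f \rVert$: the map $(x+N,\lambda) \mapsto \phi_F(f(x)) + \lambda \lVert f \rVert u_F$ is positive because $\lVert f \rVert \geq \lVert \phi_F \circ f \rVert$, and unwinding $u_F = (0+M, 1)$ gives exactly $(f(x) + M, \lambda \lVert f \rVert)$, which is the claimed $\tilde f$.

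So the key steps, in order, are: (1) check $N \subseteq \ker f$ so that $x \mapsto f(x) + M$ descends to $E/N$ — this is precisely the well-definedness argument in \autoref{lem:extension}, using that $f$ is continuous and positive and $\tilde F_+$... no wait, using that $\overline{F_+}/M$ is proper; more carefully, $f[\overline{E_+}] \subseteq \overline{F_+}$ by continuity and positivity, hence $f[N] \subseteq M$, so $x \mapsto f(x) + M$ kills $N$. (2) Recognize $x \mapsto \phi_F(f(x))$ as a positive continuous linear map $E \to \tilde F$ with norm $\leq \lVert f \rVert$. (3) Apply \autoref{lem:extension} with the AOU space $\tilde F$ and $\alpha = \lVert f \rVert \geq \lVert \phi_F \circ f \rVert$ to conclude the map $g_{\lVert f \rVert} : (x+N,\lambda) \mapsto \phi_F(f(x)) + \lambda \lVert f \rVert u_F$ is well-defined, positive, continuous with $\lVert g_{\lVert f \rVert} \rVert = \lVert f \rVert$. (4) Compute $\phi_F(f(x)) + \lambda \lVert f \rVert u_F = (f(x) + M, 0) + (0 + M, \lambda \lVert f \rVert) = (f(x) + M, \lambda \lVert f \rVert)$, identifying $g_{\lVert f \rVert}$ with the claimed $\tilde f$.

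The main obstacle — really the only subtlety — is the norm bookkeeping: one must resist the temptation to claim $\lVert \phi_F \circ f \rVert = \lVert f \rVert$, since $\phi_F$ is merely contractive (not isometric) in general, and instead feed the honest upper bound $\lVert f \rVert$ into \autoref{lem:extension} as the parameter $\alpha$; the lemma's hypothesis $\alpha \geq \lVert \phi_F \circ f \rVert$ is then satisfied because $\lVert \phi_F \rVert \leq 1$, and its conclusion delivers $\lVert \tilde f \rVert = \alpha = \lVert f \rVert$ on the nose. Everything else is routine unwinding of \autoref{def:minimal-unitization} and \autoref{prop:order-unitization-inj-pos}.
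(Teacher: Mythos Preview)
Your approach is correct and essentially the same as the paper's: both apply the extension machinery to the composite $\phi_F \circ f : E \to \tilde F$. The only difference is bookkeeping---you invoke \autoref{lem:extension} directly with $\alpha = \lVert f \rVert \geq \lVert \phi_F \circ f \rVert$ (which immediately yields positivity and $\lVert \tilde f \rVert = \lVert f \rVert$), whereas the paper appeals to \autoref{cor:extension} and then recovers $\lVert \tilde f \rVert = \lVert f \rVert$ by evaluating $\tilde f$ at the order unit $(0+N,1)$.
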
\vspace*{-5mm}
\begin{proof}
	Applying \autoref{cor:extension} to the composition $E \stackrel{\psi}{\longrightarrow} F \stackrel{\big.\phi\big._F}{\longrightarrow} \tilde F$ shows that $\tilde \psi : \tilde E \to \tilde F$, $(x + N,\lambda) \mapsto (\psi(x) + M , \lambda \lVert \psi \rVert)$ is a well-defined positive and continuous extension of $\phi_F \circ \psi$, with $\lVert \tilde \psi \rVert = \lVert \phi_F \circ \psi \rVert \leq \lVert \psi \rVert$.
	Since we have $\tilde \psi(0 + N,1) = (0 + M,\lVert \psi \rVert)$, it is clear that $\lVert \tilde \psi \rVert = \lVert \psi \rVert$.
\end{proof}

Next, as another application of \autoref{lem:extension}, we compare the original seminorm $p$ of $E$ with the seminorm $p_u(x) = \lVert \phi(x) \rVert_u$ it inherits from $\tilde E$.
By \autoref{prop:simple-formula}, $p_u$ is given by
\[ p_u(x) = \max\big(d(x,E_+),d(-x,E_+)\big). \]
Clearly $p_u(x) \leq p(x)$, but the two seminorms are generally not equivalent (see \autoref{cor:equivalent-normalization} below).
We will study the seminorm $p_u$ in more detail in \mysecref{sec:seminorm-normalization}.

If $F$ is a seminormed space, then a continuous linear map $\psi : (E,p_u) \to F$ is also continuous as a map $(E,p) \to F$.
Let $\lVert \psi \rVert_u$ and $\lVert \psi \rVert$ denote the norm of $\psi$ as a map $(E,p_u) \to F$ and $(E,p) \to F$, respectively.
Then clearly $\lVert \psi \rVert \leq \lVert \psi \rVert_u$.
Remarkably, for positive continuous linear maps to an AOU space, we have the following converse.

\begin{corollary}
	\label{cor:extension-comparison}
	Let $(E,E_+,p)$ be a seminormed preordered vector space, and let $(F,F_+,w)$ be an AOU space.
	Then a linear map $\psi : E \to F$ is positive and continuous as a map $(E,E_+,p) \to F$ if and only if it is positive and continuous as a map $(E,\overline{E_+},p_u) \to F$.
	Furthermore, if this is the case, then one has $\lVert \psi \rVert = \lVert \psi \rVert_u$.
\end{corollary}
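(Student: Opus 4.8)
The plan is to prove the two implications separately and then read off the norm equality. The easy direction is that positivity and continuity with respect to $(\overline{E_+},p_u)$ implies positivity and continuity with respect to $(E_+,p)$: since $E_+ \subseteq \overline{E_+}$ one immediately gets $f[E_+] \subseteq F_+$, and since $p_u \leq p$ (as observed just before the statement), $p_u$-boundedness forces $p$-boundedness, with $\lVert f\rVert \leq \lVert f\rVert_u$. So this direction needs no real work.

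For the reverse direction, suppose $f : (E,E_+,p) \to F$ is positive and continuous. Positivity with respect to $\overline{E_+}$ is immediate: $F_+$ is closed because $F$ is an AOU space, so by continuity $f[\overline{E_+}] \subseteq \overline{f[E_+]} \subseteq \overline{F_+} = F_+$. The substantive point is the reverse norm bound $\lVert f\rVert_u \leq \lVert f\rVert$, and here I would invoke the norm-preserving extension of \autoref{cor:extension}: it produces a positive extension $\tilde f : \tilde E \to F$ of $f$ with $\lVert \tilde f\rVert = \lVert f\rVert$ (explicitly the map $g_{\lVert f\rVert}$ of \autoref{lem:extension}). The key observation is then that $\phi : E \to \tilde E$ is isometric once $E$ is equipped with $p_u$ — this is nothing but the defining equation $p_u(x) = \lVert \phi(x)\rVert_u$. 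Consequently, for every $x \in E$,
\[ \lVert f(x)\rVert_w = \lVert \tilde f(\phi(x))\rVert_w \leq \lVert \tilde f\rVert \cdot \lVert \phi(x)\rVert_u = \lVert f\rVert \cdot p_u(x), \]
so $f$ is $p_u$-continuous with $\lVert f\rVert_u \leq \lVert f\rVert$.

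Combining the two directions shows the stated conditions are equivalent, and whenever they hold we have both $\lVert f\rVert \leq \lVert f\rVert_u$ (from $p_u \leq p$) and $\lVert f\rVert_u \leq \lVert f\rVert$ (from the extension argument), hence $\lVert f\rVert = \lVert f\rVert_u$.

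I do not expect a genuine obstacle here; the crux — and really the only idea in the argument — is the realisation that the heavy lifting has already been done: \autoref{cor:extension} together with the tautological fact that $\phi$ is a $p_u$-isometry yields the non-obvious inequality $\lVert f\rVert_u \leq \lVert f\rVert$ in one line. The one point to keep an eye on is that the operator ``norms'' $\lVert f\rVert$ and $\lVert f\rVert_u$ are well defined (equivalently, that $f$ vanishes on the common kernel $N$ of $p$-closure and $p_u$), which follows from positivity and continuity exactly as in the first paragraph of the proof of \autoref{lem:extension}.
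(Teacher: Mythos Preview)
Your argument is correct and follows essentially the same route as the paper: the easy direction via $E_+ \subseteq \overline{E_+}$ and $p_u \leq p$, and the substantive direction via the norm-preserving extension of \autoref{cor:extension} combined with the tautological fact that $\phi$ is a $p_u$-isometry. The paper phrases the latter step as ``$\phi$ defines a bipositive isometry $(E,\overline{E_+},p_u) \to \tilde E$'', whereas you unpack bipositivity separately (via closedness of $F_+$), but the content is identical.
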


Here $\overline{E_+}$ denotes the closure of $E_+$ with respect to the original seminorm $p$ of $E$.
Although we don't need this now, we point out that $\overline{E_+}$ coincides with the closure of $E_+$ with respect to $p_u$; see \autoref{rmk:normalization-closure} below.

\begin{proof}[{Proof of \autoref{cor:extension-comparison}}]
	Since the identity $(E,E_+,p) \to (E,\overline{E_+},p_u)$ is positive and contractive, it is clear that a positive continuous linear map $(E,\overline{E_+},p_u) \to F$ is also positive and continuous as a map $(E,E_+,p) \to F$.
	Furthermore, in this case we clearly have $\lVert \psi \rVert \leq \lVert \psi \rVert_u$.
	
	Conversely, if $\psi$ is positive and continuous as a map $(E,E_+,p) \to F$, then it follows from \autoref{cor:extension} that $\psi$ can be extended to a positive continuous linear map $\tilde \psi : \tilde E \to F$ with $\lVert \psi \rVert = \lVert \tilde \psi \rVert$.
	Since the natural map $\phi : E \to \tilde E$ defines a bipositive isometry $(E,\overline{E_+},p_u) \to \tilde E$, it follows that $\psi$ is also positive and continuous as a map $(E,\overline{E_+},p_u) \to F$, and that we have $\lVert \psi \rVert_u = \lVert \tilde \psi \circ \phi \rVert_u \leq \lVert \tilde \psi \rVert = \lVert \psi \rVert$.
\end{proof}

The special case $F = \R$ is worth mentioning: it follows that $(E,E_+,p)$ and $(E,\overline{E_+},p_u)$ have the same set of positive continuous linear functionals, which we may unambiguously denote as $E_+\topdual$.
This is remarkable, because $p$ and $p_u$ are generally not equivalent (see \autoref{cor:equivalent-normalization} below).
As a consequence, it is easy to determine the topological dual spaces of $(E,p_u)$ and $\tilde E$.

\begin{corollary}
	\label{cor:dual-space-pu}
	Let $(E,E_+,p)$ be a seminormed preordered vector space, and let $p_u(x) = \lVert \phi(x) \rVert_u$ be the seminorm inherited from $\tilde E$.
	Then $(E,p_u)\topdual = \spn(E_+\topdual)$.
\end{corollary}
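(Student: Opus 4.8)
The plan is to prove the two inclusions of $(E,p_u)\topdual = \spn(E_+\topdual)$ separately, leaning on the fact (established in the preceding corollary, case $F = \R$) that $(E,E_+,p)$ and $(E,\overline{E_+},p_u)$ have the same positive continuous linear functionals $E_+\topdual$, with matching norms. First I would observe that $\spn(E_+\topdual) \subseteq (E,p_u)\topdual$: every $\varphi \in E_+\topdual$ is $p_u$-continuous by the previous corollary, and $(E,p_u)\topdual$ is a linear subspace of $E^*$, so it contains the span. The substance is the reverse inclusion: every $p_u$-continuous linear functional decomposes as a difference of two $p_u$-continuous \emph{positive} linear functionals.

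For the reverse inclusion, let $\varphi : E \to \R$ be $p_u$-continuous, say $|\varphi(x)| \leq C\hair p_u(x)$ for all $x$, and recall $p_u(x) = \max\big(d(x,E_+),d(-x,E_+)\big)$. The key step is to push $\varphi$ through the unitization: by \autoref{cor:extension-comparison} (with $F = \R$), $\varphi$ extends to a continuous linear functional $\tilde\varphi : \tilde E \to \R$ with $\lVert\tilde\varphi\rVert = \lVert\varphi\rVert_u$. Now $\tilde E$ is an AOU space, hence in particular an order unit space with generating cone $\tilde E_+$, so the continuous dual $\tilde E\topdual$ is spanned by its positive cone $\tilde E_+\topdual$ — this is the standard fact that on an order unit space every continuous functional is a difference of positive ones (the positive cone of the dual is generating; see e.g.\ the decomposition coming from $\lvert\tilde\varphi\rvert \leq \lVert\tilde\varphi\rVert\, u^*$ where $u^*$ is evaluation-type functional, or simply cite \cite[Cones\&Duality]{Cones&Duality}-style results on AOU duals). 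Write $\tilde\varphi = \tilde\varphi_1 - \tilde\varphi_2$ with $\tilde\varphi_1,\tilde\varphi_2 \in \tilde E_+\topdual$. Restricting along $\phi : E \to \tilde E$, and using that $\phi$ is positive and $p_u$-isometric onto its image (\autoref{prop:order-unitization-inj-pos} together with the identification in \autoref{cor:extension-comparison}), we get $\varphi = \varphi_1 - \varphi_2$ where $\varphi_i = \tilde\varphi_i \circ \phi$ is positive and $p_u$-continuous, i.e.\ $\varphi_i \in E_+\topdual$. Hence $\varphi \in \spn(E_+\topdual)$.

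The main obstacle is ensuring that the decomposition $\tilde\varphi = \tilde\varphi_1 - \tilde\varphi_2$ can be performed with both pieces \emph{continuous} on $\tilde E$ — this is exactly where the AOU structure of $\tilde E$ is used, since on a general ordered vector space a functional need not split into positive continuous parts. On an order unit space it does: one may take $\tilde\varphi_1(z) = \tfrac12\big(\lVert\tilde\varphi\rVert\,\langle u^\flat, z\rangle + \tilde\varphi(z)\big)$ and $\tilde\varphi_2$ analogously, where $\langle u^\flat, \:\cdot\:\rangle$ is any state (positive unital functional) dominating $\tilde\varphi$ in absolute value, whose existence follows from $\lvert\tilde\varphi(z)\rvert \leq \lVert\tilde\varphi\rVert\,\lVert z\rVert_u$ and $\lVert z\rVert_u = \mu_u(z)$; positivity of $\tilde\varphi_i$ is then immediate from $z \geq 0 \implies \lVert z\rVert_u \cdot u \geq z$, hence $\langle u^\flat, z\rangle \geq \lvert\tilde\varphi(z)\rvert / \lVert\tilde\varphi\rVert$ after a short estimate. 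A secondary, purely bookkeeping point is to confirm that restricting a \emph{positive} continuous functional on $\tilde E$ along $\phi$ yields a functional in $E_+\topdual$ in the sense intended (positive for $\overline{E_+}$, continuous for $p_u$); both hold because $\phi$ is positive with $\phi^{-1}[\tilde E_+] = \overline{E_+}$ and is a $p_u$-isometry onto its image. With these in hand the proof is complete.
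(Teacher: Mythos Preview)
Your approach is correct in outline but genuinely more circuitous than the paper's, and one step is miscited. The paper's proof is a single observation: since $\lVert\cdot\rVert_u$ is monotone on $\tilde E$ and $\phi$ is positive, the seminorm $p_u$ is monotone on $E$; hence $E_+$ is normal with respect to $p_u$, and the standard fact that the dual cone of a normal cone is generating (cited from \cite{Cones&Duality}) gives $(E,p_u)\topdual = \spn(E_+\topdual)$ immediately. No passage through $\tilde E$ is needed at the level of functionals; the properties of $p_u$ alone suffice.

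Your route---extend $\varphi$ to $\tilde E$, decompose there, restrict back---works, but the extension step invokes the wrong result. \autoref{cor:extension-comparison} does not produce extensions to $\tilde E$; it only says that positive continuity under $p$ and under $p_u$ coincide, and it applies only to \emph{positive} maps, whereas your $\varphi$ is an arbitrary $p_u$-continuous functional. What you actually need is trivial: $\varphi$ vanishes on $N$ (since $p_u$ does), so it factors through $E/N \cong \phi[E]$, which sits as a codimension-$1$ subspace of $\tilde E$ carrying the norm $\lVert\cdot\rVert_u$; any continuous functional there extends continuously to $\tilde E$ by Hahn--Banach (or bare hands). After that, your decomposition $\tilde\varphi = \tilde\varphi_1 - \tilde\varphi_2$ in the AOU space $\tilde E$ is indeed available as a black box (the dual cone of an order unit space is generating), though your explicit recipe with a ``state $u^\flat$ dominating $\tilde\varphi$ in absolute value'' is muddled---an arbitrary state does not dominate an arbitrary functional, and no canonical $u^\flat$ exists. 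Better simply to cite the standard result. With those two fixes your argument is complete; it just reproves, via $\tilde E$, the normality-implies-generating-dual fact that the paper invokes directly.
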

\begin{proof}
	Since $\lVert \:\cdot\: \rVert_u : \tilde E \to \R$ is monotone and $\phi : E \to \tilde E$ is positive, it follows that $p_u$ is monotone.
	Therefore $E_+$ is normal with respect to $p_u$.
	Since $(E,E_+,p)$ and $(E,\overline{E_+},p_u)$ have the same positive continuous linear functionals (by the preceding remarks), and since the dual cone of a normal cone is generating (see \cite[Theorem 2.26]{Aliprantis-Tourky}), it follows that $(E,p_u)\topdual = \spn(E_+\topdual)$.
\end{proof}

\begin{corollary}
	\label{cor:unitization-dual-space}
	Let $(E,E_+,p)$ be a seminormed preordered vector space.
	Then the dual space of $\tilde E$ is isomorphic with $\spn(E_+\topdual) \oplus \R$, where $(\varphi,\alpha)(x + N, \lambda) = \varphi(x) + \alpha\lambda$.
	Under this identification, the dual cone of $\tilde E_+$ is $\{ (\varphi,\alpha) \in \spn(E_+\topdual) \oplus \R \, : \, \varphi \geq 0\ \text{and}\ \alpha \geq \lVert \varphi \rVert\}$.
\end{corollary}
\begin{proof}
	Let $\pi_2 : \tilde E = (E/N) \oplus \R \to \R$ be the projection onto the second coordinate.
	Since $\pi_2$ is positive and unital, it is continuous (with norm $1$).
	Therefore $(E/N) \oplus \{0\}$ and $\{0\} \oplus \R$ are complementary subspaces of $\tilde E$, so we have $\tilde E \cong (E/N) \oplus \R$ topologically.
	It follows that $\tilde E\topdual = (E/N)\topdual \oplus \R$.
	Since $(E,p_u)$ carries the topology inherited from $E/N$, it follows from \autoref{cor:unitization-dual-space} that $(E/N)\topdual = (E,p_u)\topdual = \spn(E_+\topdual)$.
	Thus, $\tilde E\topdual = \spn(E_+\topdual) \oplus \R$, via the pairing $(\varphi,\alpha)(x + N,\lambda) = \varphi(x) + \alpha\lambda$.
	
	If $\varphi \in E_+\topdual$ and $\alpha \geq \lVert \varphi \rVert$, then it follows from \autoref{lem:extension} that $(\varphi,\alpha)$ is positive.
	Conversely, if $(\varphi,\alpha) \in \tilde E_+\topdual$, then in particular $\varphi(x) = (\varphi,\alpha)(x + N,0) \geq 0$ for all $x \in E_+$, so $\varphi \in E_+\topdual$.
	Then, since $(\varphi,\alpha)$ is a positive extension of $\varphi$, it follows from \autoref{lem:extension} that $\alpha \geq \lVert \varphi \rVert$.
\end{proof}

\begin{corollary}
	\label{cor:state-space}
	The state space of $(\tilde E,\tilde E_+,u)$ is $\{ (\varphi,1) \in \spn(E_+\topdual) \oplus \R \, : \, \varphi \geq 0 \ \text{and}\ \lVert \varphi \rVert \leq 1\}$.
\end{corollary}

We illustrate the preceding results by revisiting the examples from \mysecref{sec:construction}.

\begin{example}[{cf.~\autoref{xmpl:ice-cream-cone}}]
	Recall: if $E_+ = \{0\}$ and $p$ is a norm, then $\tilde E_+$ is the ice cream cone $ \{(x, \lambda) \in E \oplus \R \, : \, \lVert x \rVert \leq \lambda\}$ for the norm $p$.
	We have $E_+\topdual = E\topdual$ (the dual cone is the whole space), so it follows from \autoref{cor:unitization-dual-space} that $\tilde E_+\topdual$ is the dual ice cream cone $\{ (\varphi,\alpha) \in E\topdual \oplus \R \, : \, \lVert \varphi \rVert \leq \alpha\}$.
\end{example}

\begin{example}[{cf.~\autoref{xmpl:R2-lq}}]
	Let $(E,E_+,p) = (\R^2,\R_{\geq 0}^2,\lVert \:\cdot\: \rVert_{\ell_q})$, where $\lVert \:\cdot\: \rVert_{\ell_q}$ denotes the $\ell_q$\nobreakdash-norm for $1 \leq q \leq \infty$.
	Under the natural pairing $\R^2 \times \R^2 \to \R$, we have $(\R_{\geq 0}^2)\topdual = R_{\geq 0}^2$, so the state space of $(\tilde E,\tilde E_+,u)$ is given by $\{(x,y,1) \in \R^3 \, : \, x,y \geq 0\ \text{and}\ \lVert (x,y) \rVert_{\ell_{q'}} \leq 1\}$, where $1 \leq q' \leq \infty$ is such that $\frac{1}{q} + \frac{1}{q'} = 1$.
	This is illustrated in \autoref{fig:R2-state-space}.
	\begin{figure}[h!t]
		\setlength{\belowcaptionskip}{-4pt}
		\centering
		\begin{tikzpicture}[scale=.65,
			            cone_style/.style={green,fill opacity=0.5},
			            cone_border_style/.style={green!70!black,thick},
			            cone_label/.style={green!30!black}]
			\begin{scope}[xshift=-6cm]
				\draw[gray!50,thin] (-2,-2) grid (2,2);
				\fill[cone_style] (0,0) rectangle (1,1);
				\draw[thick] (-2,0) -- (2,0) (0,-2) -- (0,2);
				\draw[cone_border_style] (0,0) rectangle (1,1);
				\node[anchor=base] at (0,-3) {$q = 1$};
			\end{scope}
			\begin{scope}[xshift=0cm]
				\draw[gray!50,thin] (-2,-2) grid (2,2);
				\fill[cone_style] (0,0) -- (1,0) arc (0:90:1cm) -- (0,0);
				\draw[thick] (-2,0) -- (2,0) (0,-2) -- (0,2);
				\draw[cone_border_style] (0,0) -- (1,0) arc (0:90:1cm) -- (0,0);
				\node[anchor=base] at (0,-3) {$q = 2$};
			\end{scope}
			\begin{scope}[xshift=6cm]
				\draw[gray!50,thin] (-2,-2) grid (2,2);
				\fill[cone_style] (0,0) -- (1,0) -- (0,1) -- (0,0);
				\draw[thick] (-2,0) -- (2,0) (0,-2) -- (0,2);
				\draw[cone_border_style] (0,0) -- (1,0) -- (0,1) -- (0,0);
				\node[anchor=base] at (0,-3) {$q = \infty$};
			\end{scope}
		\end{tikzpicture}
		\caption{The state space of the Archimedean order unitization of $(\R^2,\R_{\geq 0}^2,\lVert \:\cdot\: \rVert_{\ell_q})$, drawn in the $z = 1$ plane, for different values of $q$. (Compare \autoref{fig:R2-example}.)}
		\label{fig:R2-state-space}
	\end{figure}
\end{example}

As a final corollary, we get the following alternative formulas for the positive cone and norm of the Archimedean order unitization $\unit{E} = \tilde E$.

\begin{corollary}
	\label{cor:alternative-formulas}
	The Archimedean order unitization from \autoref{def:minimal-unitization} satisfies
	\begin{align*}
		\tilde E_+ = \big\{ (x + N, \lambda) \, : \, \varphi(x) \geq -\lambda \ \text{for all $\varphi \in E_+\topdual$ with $\lVert \varphi \rVert \leq 1$} \big\}, 
	\end{align*}
	and the order unit norm of $\tilde E$ satisfies $\lVert (x + N , \lambda) \rVert_u = \max \big\{ | \varphi(x) + \lambda | \, : \, \varphi \in E_+\topdual, \ \lVert \varphi \rVert \leq 1 \big\}$.
\end{corollary}
\begin{proof}
	Since $\tilde E_+$ is closed with respect to the order unit norm, it follows from the bipolar theorem and \autoref{cor:unitization-dual-space} that
	\begin{align*}
		\tilde E_+ &= \big\{ (x + N , \lambda) \, : \, \varphi'(x + N , \lambda) \geq 0 \ \text{for all $\varphi' \in \tilde E_+\topdual$} \big\} \\
		&= \big\{ (x + N , \lambda) \, : \, \varphi(x) + \alpha\lambda \geq 0 \ \text{for all $\varphi \in E_+\topdual$, $\alpha \geq \lVert \varphi \rVert$} \big\} \\
		&= \big\{ (x + N, \lambda) \, : \, \varphi(x) \geq -\alpha\lambda \ \text{for all $\varphi \in E_+\topdual$, $\lVert \varphi \rVert \leq \alpha$} \big\} \\
		&= \big\{ (x + N, \lambda) \, : \, \varphi(x) \geq -\lambda \ \text{for all $\varphi \in E_+\topdual$ with $\lVert \varphi \rVert \leq 1$} \big\}.
	\end{align*}
	
	\pagebreak
	\noindent
	Since $\tilde E_+$ is Archimedean, the infimum in the order unit norm \eqref{eqn:order-unit} is attained, so it follows that
	\begin{align*}
		\lVert (x + N , \lambda) \rVert_u &= \min \big \{ \mu \in \R_{\geq 0} \, : \, (0 + N , -\mu) \leq (x + N , \lambda) \leq (0 + N , \mu) \big\} \\
		&= \min \big \{ \mu \in \R_{\geq 0} \, : \, (x + N , \lambda + \mu) , (-x + N , -\lambda + \mu) \in \tilde E_+ \big\} \\
		&= \min \big \{ \mu \in \R_{\geq 0} \, : \, -\lambda - \mu \leq \varphi(x) \leq -\lambda + \mu \ \text{for all $\varphi \in E_+\topdual$ with $\lVert \varphi \rVert \leq 1$} \big\} \\
		&= \min \big \{ \mu \in \R_{\geq 0} \, : \, - \mu \leq \varphi(x) + \lambda \leq \mu \ \text{for all $\varphi \in E_+\topdual$ with $\lVert \varphi \rVert \leq 1$} \big\} \\
		&= \max \big\{ | \varphi(x) + \lambda | \, : \, \varphi \in E_+\topdual, \ \lVert \varphi \rVert \leq 1 \big\}. \qedhere
	\end{align*}
\end{proof}

\section{The 1-max-normalization of a seminorm}
\label{sec:seminorm-normalization}
Let $(E,E_+,p)$ be a seminormed preordered vector space, and let $p_u(x) = \lVert \phi(x) \rVert_u$ be the seminorm inherited from $\tilde E$ via the canonical map $\phi : E \to \tilde E$.
We already saw in the previous section that the spaces $(E,E_+,p)$ and $(E,\overline{E_+},p_u)$ have the same positive continuous linear maps to AOU spaces.
In this section, we study the relationship between $p$ and $p_u$ in more detail.

Since the natural map $\phi : E \to \tilde E$ is positive and the norm of $\tilde E$ is $1$\nobreakdash-max-normal, it is easy to see that $p_u$ defines a $1$\nobreakdash-max-normal seminorm on $E$.
We call $p_u$ the \emph{$1$\nobreakdash-max-normalization of $p$}.
It follows from \autoref{prop:simple-formula} and \autoref{cor:alternative-formulas} that $p_u$ is given by
\[ p_u(x) = \max\big(d(x,E_+),d(-x,E_+)\big) = \max \big\{ | \varphi(x) | \, : \, \varphi \in E_+\topdual, \ \lVert \varphi \rVert \leq 1 \big\}. \]

\begin{remark}
	\label{rmk:normalization-closure}
	Recall from \autoref{prop:order-unitization-inj-pos} that $\phi^{-1}[\tilde E_+] = \overline{E_+}$.
	Since $\tilde E_+$ is closed, it follows that $\overline{E_+}$ is closed not only with respect to the original seminorm $p$ but also with respect to the smaller (i.e.{} topologically coarser) seminorm $p_u$.
	This shows that the original closure $\overline{E_+}$ and the $p_u$\nobreakdash-closure $\overline{E_+}^{\,p_u}$ of $E_+$ coincide.
\end{remark}

Clearly we have $p_u(x) \leq p(x)$ for all $x\in E$.
We will show below that $p_u$ is the largest $1$\nobreakdash-max-normal seminorm smaller than $p$.
We start by comparing the unit balls.

\begin{proposition}
	\label{prop:full-hull}
	Let $(E,E_+,p)$ be a seminormed preordered vector space, and let $p_u$ be the $1$\nobreakdash-max-normalization of $p$.
	Then:
	\begin{enumerate}[label=(\alph*)]
		\item\label{itm:fh:open} The open unit ball of $p_u$ is equal to the full hull of the open unit ball of $p$;
		\item\label{itm:fh:closed} The closed unit ball of $p_u$ is equal to the $p$\nobreakdash-closure of the full hull of the closed unit ball of $p$.
	\end{enumerate}
\end{proposition}
\begin{proof}
	Let $B^\circ$ and $B_u^\circ$ denote the open unit balls of $p$ and $p_u$, respectively,%
		\hair\footnote{The notation is a bit ambiguous, because it suggests taking the interior (without specifying the topology, $p$ or $p_u$) or polar of a set. However, we believe that no confusion can arise from this.}
	and let $B$ and $B_u$ denote the corresponding closed unit balls.
	\begin{enumerate}[label=(\alph*)]
		\item Since $p_u$ is $1$\nobreakdash-max-normal, $B_u^\circ$ is full.
		Moreover, since $p_u \leq p$, we have $B^\circ \subseteq B_u^\circ$, and therefore $\fh(B^\circ) \subseteq B_u^\circ$.
		For the converse, let $y \in B_u^\circ$.
		Then $p_u(y) = \max(d(y,E_+),d(-y,E_+)) < 1$, so we may choose $x,z \in E_+$ with $p(y - x),p(y + z) < 1$.
		Since $x,z \geq 0$, it follows that $y - x \leq y \leq y + z$, which shows that $y \in \fh(B^\circ)$.
		
		\item The inclusion $\fh(B) \subseteq B_u$ follows as in \ref{itm:fh:open}.
		Moreover, since $p_u \leq p$, the closed unit ball $B_u$ of $p_u$ is also closed with respect to $p$.
		It follows that $\overline{\fh(B)}^{\,p} \subseteq B_u$.
		For the converse, let $y \in B_u$.
		Then we have $\frac{n}{n+1} y \in B_u^\circ$ for all $n \in \N$, so it follows from \ref{itm:fh:open} that $\frac{n}{n+1}y \in \fh(B^\circ) \subseteq \fh(B)$.
		Letting $n \to \infty$, we find that $y \in \overline{\fh(B)}^{\,p}$.
		\qedhere
	\end{enumerate}
\end{proof}

\noindent
To illustrate the preceding result, we once again revisit \autoref{xmpl:R2-lq}.

\begin{example}[{cf.~\autoref{xmpl:R2-lq}}]
	Let $(E,E_+,p) = (\R^2,\R_{\geq 0}^2,\lVert \:\cdot\: \rVert_{\ell_q})$, where $\lVert \:\cdot\: \rVert_{\ell_q}$ denotes the $\ell_q$\nobreakdash-norm for $1 \leq q \leq \infty$, and let $p_u$ be the $1$\nobreakdash-max-normalization of $p$.
	Then the open unit ball $B_u^\circ$ of $p_u$ is depicted in \autoref{fig:R2-full-hull}.
	Note that $B_u^\circ$ can be thought of as the full hull of $B^\circ$, but also as $(\mathcal B \cap -\mathcal B)^\circ$, where $\mathcal B$ is the set drawn in \autoref{fig:R2-example}.
	In this example, it is not hard to see that $B_u^\circ = B^\circ$ if and only if $q = \infty$ (see also \myautoref{cor:seminorm-normalization}{itm:pu:equal} below).
	Moreover, when $q < \infty$, we have $p_u(1,-1) = 1$ whereas $p_u(1,1) = 2^{1/q} \neq 1$, which shows that the $1$\nobreakdash-max-normalization of a lattice norm is not necessarily a lattice norm.
\end{example}

\begin{figure}[h!t]
	\setlength{\belowcaptionskip}{-4pt}
	\centering
	\begin{tikzpicture}[scale=.65,
		            cone_style/.style={blue!60,fill opacity=0.5},
		            cone_label/.style={blue!30!black},
		            cone_dash/.style={blue!30!black,densely dotted,thin}]
		\begin{scope}[xshift=-6cm]
			\draw[gray!50,thin] (-2,-2) grid (2,2);
			\fill[cone_style] (-1,0) -- (0,-1) -| (1,0) -- (0,1) -| (-1,0);
			\draw[cone_dash] (0,-1) -- (1,0) (0,1) -- (-1,0);
			\draw[thick] (-2,0) -- (2,0) (0,-2) -- (0,2);
			\node[cone_label] at (-.65,.65) {$B_u^\circ$};
			\node[anchor=base] at (0,-3) {$q = 1$};
		\end{scope}
		\begin{scope}[xshift=0cm]
			\draw[gray!50,thin] (-2,-2) grid (2,2);
			\fill[cone_style] (-1,0) arc (180:270:1cm) -| (1,0) arc (0:90:1cm) -| (-1,0);
			\draw[cone_dash] (0,-1) arc (-90:0:1cm) (0,1) arc (90:180:1cm);
			\draw[thick] (-2,0) -- (2,0) (0,-2) -- (0,2);
			\node[cone_label] at (-.65,.65) {$B_u^\circ$};
			\node[anchor=base] at (0,-3) {$q = 2$};
		\end{scope}
		\begin{scope}[xshift=6cm]
			\draw[gray!50,thin] (-2,-2) grid (2,2);
			\fill[cone_style] (-1,-1) rectangle (1,1);
			\draw[thick] (-2,0) -- (2,0) (0,-2) -- (0,2);
			\node[cone_label] at (-.65,.65) {$B_u^\circ$};
			\node[anchor=base] at (0,-3) {$q = \infty$};
		\end{scope}
	\end{tikzpicture}
	\caption{The unit ball of the $1$\nobreakdash-max-normalization of $(\R^2,\R_{\geq 0}^2,\lVert \:\cdot\: \rVert_{\ell_q})$, for different values of $q$.}
	\label{fig:R2-full-hull}
\end{figure}

\begin{remark}
	The closure in \myautoref{prop:full-hull}{itm:fh:closed} can not always be omitted, as we now explain.
	
	A set $M \subseteq E$ is called \emph{proximinal} if the infimum $d(x,M) = \inf_{y \in M} p(x - y)$ is attained for all $x \in E$.
	Many standard examples of convex cones in normed spaces are proximinal, owing to the following three sufficient conditions for proximality:
	\begin{itemize}
		\item It is well-known that every non-empty weak\nobreakdash-$*$ closed set in the dual of a normed vector space is proximinal (see e.g.{} \cite[p. 239]{Phelps-approximation}), so the natural cone in the dual of a normed ordered vector space is proximinal.
		In particular, every closed convex cone in a reflexive Banach space is proximinal.
		
		\item It is not hard to see that the positive cone of every Banach lattice $E$ is proximinal (for every $x \in E$, its positive part $x^+$ is a closest vector in $E_+$).
		
		\item By a similar argument, for every $C^*$\nobreakdash-algebra $\mathcal A$, the positive cone of $\mathcal A$ is proximinal in the self-adjoint part of $\mathcal A$ (see also \autoref{prop:C*-algebra} below).
	\end{itemize}
	For a recent survey on proximinal sets, see \cite{Assadi-Haghshenas-Narang}.
	
	If $E_+$ is proximinal, then a straightforward modification of the proof of \myautoref{prop:full-hull}{itm:fh:open} shows that the closed unit ball of $p_u$ is the full hull of the closed unit ball of $p$ (without taking the closure).
	By the preceding discussion, this is the case for most standard examples of normed ordered vector spaces.
	Nevertheless, the following example shows that this is not always the case and that the closure in \myautoref{prop:full-hull}{itm:fh:closed} can not always be omitted.
\end{remark}

\begin{example}
	\label{xmpl:proximinal}
	Let $E = \ell^1$ with the usual norm $p = \lVert \:\cdot\: \rVert_1$ and the (non-standard) closed, proper, and generating cone $E_+ = \{ x \in \ell^1 \, : \, x \geq 0\ \text{pointwise and}\ \langle x, \varphi \rangle \geq 0\}$, where $\varphi = (\frac{1}{2},-\frac{1}{2},\frac{2}{3},-\frac{2}{3},\frac{3}{4},-\frac{3}{4},\ldots) \in (\ell^1)\topdual = \ell^\infty$.
	Furthermore, let $y = (-1,1,0,0,0,\ldots) \in \ell^1$.
	Then it not hard to see that $d(y,E_+) = \frac{3}{2}$ but $y$ has no closest point in $E_+$ (i.e.{} the infimum is not attained), and that $d(-y,E_+) = 1$.
	Therefore, $p_u(y) = \max(d(y,E_+),d(-y,E_+)) = \frac{3}{2}$.
	This means that $\frac{2}{3}y$ lies in the closed unit ball of $p_u$ but not in the full hull of the closed unit ball of $p$.
	Indeed, if $x \leq \frac{2}{3}y \leq z$ with $p(x),p(z) \leq 1$, then $\frac{2}{3}y - x \in E_+$ with $p(\frac{2}{3}y - (\frac{2}{3}y - x)) = p(x) \leq 1$, which is impossible because $d(\frac{2}{3}y,E_+) = 1$ but the infimum is not attained.
\end{example}

We proceed to prove the following consequences of \autoref{prop:full-hull}.

\begin{corollary}
	\label{cor:seminorm-normalization}
	Let $(E,E_+,p)$ be a seminormed preordered vector space, and let $p_u$ be the $1$\nobreakdash-max-normalization of $p$.
	Then:
	\begin{enumerate}[label=(\alph*)]
		\item\label{itm:pu:equal} $p = p_u$ if and only if $p$ is $1$\nobreakdash-max-normal;
		\item\label{itm:pu:largest} $p_u$ is the largest $1$\nobreakdash-max-normal seminorm smaller than $p$;
		\item\label{itm:pu:puu} $(p_u)_u = p_u$.
	\end{enumerate}
\end{corollary}
\begin{proof}
	\begin{enumerate}[label=(\alph*)]
		\item Let $B^\circ$ and $B_u^\circ$ denote the open unit balls of $p$ and $p_u$, respectively.
		If $p = p_u$, then $p$ is $1$\nobreakdash-max-normal.
		Conversely, if $p$ is $1$\nobreakdash-max-normal, then $B^\circ$ is already full, so it follows from \myautoref{prop:full-hull}{itm:fh:open} that $B^\circ = B_u^\circ$, hence $p = p_u$.
		
		\item For every seminorm $q \leq p$, we have $d_q(x,E_+) \leq d_p(x,E_+)$, and therefore $q_u \leq p_u$.
		Hence, if $q$ is a $1$\nobreakdash-max-normal seminorm smaller than $p$, then $q = q_u \leq p_u$, by \ref{itm:pu:equal}.
		
		\item Immediate from \ref{itm:pu:equal}, since $p_u$ is $1$\nobreakdash-max-normal.
		\qedhere
	\end{enumerate}
\end{proof}

\begin{corollary}
	Let $(E,E_+,p)$ be a seminormed preordered vector space.
	Then the canonical map $\phi : E \to \unit{E}$ is isometric if and only if $p$ is $1$\nobreakdash-max-normal.
\end{corollary}

Note that an isometric map from a seminormed space to a normed space is not necessarily injective.
The canonical map $\phi : E \to \unit{E}$ is injective if and only if $\overline{E_+}$ is a proper cone.

\myautoref{cor:seminorm-normalization}{itm:pu:puu} shows that the operation that maps a seminorm to its $1$\nobreakdash-max-normalization is idempotent.
In fact, more is true: $(E,E_+,p)$ and $(E,\overline{E_+},p_u)$ have the same unitization.

\begin{proposition}
	\label{prop:order-unitization-is-stable}
	Let $(E,E_+,p)$ be a seminormed preordered vector space, and let $p_u$ be the $1$\nobreakdash-max-normalization of $p$.
	Then $(E,E_+,p)$, $(E,\overline{E_+},p)$, $(E,E_+,p_u)$ and $(E,\overline{E_+},p_u)$ all have the same Archimedean order unitization.
\end{proposition}
\begin{proof}
	By \autoref{cor:extension-comparison}, $(E,E_+,p)$ and $(E,\overline{E_+},p_u)$ have the same continuous positive linear functionals of norm $\leq 1$, so it follows from \autoref{cor:alternative-formulas} that these two spaces have the same Archimedean order unitization.
	The other two sit in between, so they must also be equal.
\end{proof}

Next, we study under which conditions $p$ and $p_u$ are equivalent.
It is well known that every locally full seminorm $p$ is equivalent to a $1$\nobreakdash-max-normal seminorm (see for instance \cite[proof of Theorem 2.38]{Aliprantis-Tourky}).
Using the $1$\nobreakdash-max-normalization, we prove the following stronger result.

\begin{proposition}
	\label{prop:equivalent-normalization}
	Let $(E,E_+,p)$ be a seminormed preordered vector space, and let $p_u$ be the $1$\nobreakdash-max-normalization of $p$.
	Then for all $\alpha \geq 1$, the seminorm $p$ is $\alpha$\nobreakdash-max-normal if and only if $\frac{1}{\alpha} p \leq p_u \leq p$.
\end{proposition}
\begin{proof}
	If $\frac{1}{\alpha} p \leq p_u \leq p$, then for all $x,y,z \in E$ with $x \leq y \leq z$ we have
	\[ p(y) \leq \alpha p_u(y) \leq \alpha \max(p_u(x),p_u(z)) \leq \alpha \max(p(x),p(z)), \]
	because $p_u$ is $1$\nobreakdash-max-normal.
	Hence, it follows that $p$ is $\alpha$\nobreakdash-max-normal.
	
	For the converse, suppose that $p$ is $\alpha$\nobreakdash-max-normal.
	Let $y \in E$ be arbitrary, and choose sequences $\{x_n\}_{n=1}^\infty$ and $\{z_n\}_{n=1}^\infty$ in $E_+$ such that
	\[ d(y,E_+) = \lim_{n\to\infty} p(y - x_n), \qquad \text{and} \qquad d(-y,E_+) = \lim_{n\to\infty} p(-y - z_n). \]
	For all $n \in \N$ we have $y - x_n \leq y \leq y + z_n$, and therefore $p(y) \leq \alpha \max(p(y - x_n) , p(y + z_n))$, since $p$ is $\alpha$\nobreakdash-max-normal.
	It follows that
	\[ p_u(y) = \max(d(y,E_+),d(-y,E_+)) = \lim_{n\to\infty} \max(p(y - x_n) , p(-y - z_n)) \geq \tfrac{1}{\alpha} p(y). \]
	The inequality $p_u \leq p$ holds always, so we have $\frac{1}{\alpha} p \leq p_u \leq p$.
\end{proof}

In particular, this shows that every $\alpha$\nobreakdash-max-normal seminorm is $\alpha$\nobreakdash-equivalent to a $1$\nobreakdash-max-normal seminorm.
To our knowledge, this simple result is not particularly well known.

Finally, as an application, we turn to locally convex spaces with a normal cone.
Given a family of seminorms that generates the topology, we give a direct formula for a family of \emph{monotone} seminorms that also generates the topology.

\begin{theorem}[{$=$ \autoref{thm:intro:lcs-normal}}]
	\label{thm:lcs-normal}
	Let $(E,E_+,\lintop)$ be a locally convex ordered vector space whose positive cone is normal.
	Then for every family of seminorms $\{p_\lambda\}_{\lambda \in \Lambda}$ generating the topology of $E$, the corresponding family $\{p_{\lambda,u}\}_{\lambda \in \Lambda}$ of $1$\nobreakdash-max-normalizations also generates the topology of $E$.
	\textup(Here $p_{\lambda,u}(x) = \max(d_\lambda(x,E_+),d_\lambda(-x,E_+))$, where $d_\lambda(x,E_+) = \inf_{y\in E_+} p_\lambda(x - y)$.\textup)
\end{theorem}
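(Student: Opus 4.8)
The plan is to show that the topology $\lintop_u$ generated by the family $\{p_{\lambda,u}\}_{\lambda\in\Lambda}$ coincides with $\lintop$. One inclusion is immediate: since $\phi$ is contractive (\autoref{prop:order-unitization-inj-pos}) we have $p_{\lambda,u}\le p_\lambda$ for every $\lambda$, so each $p_{\lambda,u}$ is $\lintop$-continuous and hence $\lintop_u\subseteq\lintop$. This also settles the parenthetical remark: $p_{\lambda,u}$ is by definition the $1$-max-normalization of $p_\lambda$, its formula is the one obtained from \autoref{prop:simple-formula} applied to $p_\lambda$, and $p_{\lambda,u}$ is $1$-max-normal (in particular monotone) because $\phi$ is positive and the order unit norm of the Archimedean order unitization $\widetilde E$ of $(E,E_+,p_\lambda)$ is $1$-max-normal.

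For the reverse inclusion I would exploit normality of $E_+$: the topology $\lintop$ has a base of neighbourhoods of $0$ consisting of full, absolutely convex sets, so it suffices to show that every such set $U$ is a $\lintop_u$-neighbourhood of $0$. Fix such a $U$. Since $U$ is a $\lintop$-neighbourhood of $0$ and the family $\{p_\lambda\}$ generates $\lintop$, there are $\lambda\in\Lambda$ and $\delta>0$ with $\{x\in E:p_\lambda(x)<\delta\}\subseteq U$ — here I use that a generating family of seminorms may be taken directed, so that a basic neighbourhood is described by a single seminorm.

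Now I would apply \autoref{prop:full-hull} to $p_\lambda$: since the statement is homogeneous it extends from the unit ball to the ball of radius $\delta$, giving $\{x:p_{\lambda,u}(x)<\delta\}=\fh(\{x:p_\lambda(x)<\delta\})$. As $U$ is full and contains $\{x:p_\lambda(x)<\delta\}$, it contains this full hull, i.e.\ $\{x:p_{\lambda,u}(x)<\delta\}\subseteq U$. Hence $U$ contains a basic $\lintop_u$-neighbourhood of $0$, so $U\in\lintop_u$; combining the two inclusions gives $\lintop=\lintop_u$.

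The point I expect to require care is precisely this reduction to a single seminorm. For a general, non-directed generating family a basic $\lintop$-neighbourhood is a finite intersection $\bigcap_i\{p_{\lambda_i}<\delta\}$, and the full hull of that intersection is in general strictly smaller than $\bigcap_i\{p_{\lambda_i,u}<\delta\}$; so one genuinely has to first pass to (an equivalent of) $\max_i p_{\lambda_i}$ and only then take full hulls. Once this is arranged the rest is a direct combination of normality of $E_+$ with \autoref{prop:full-hull}, with no further estimates.
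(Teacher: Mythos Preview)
Your argument is essentially the paper's own proof: show $\lintop_u\subseteq\lintop$ from $p_{\lambda,u}\le p_\lambda$, take a base of full neighbourhoods $V$ of $0$, choose $\varepsilon B_\lambda\subseteq V$, and conclude $\varepsilon B_{\lambda,u}=\fh(\varepsilon B_\lambda)\subseteq V$ via \autoref{prop:full-hull}. The paper does exactly this, with the same reduction to a single index $\lambda$.

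The caveat you raise in your final paragraph is not merely a matter of care, however; it is a genuine gap, and it is present in the paper's proof as well. For a non-directed family one can only guarantee $\bigcap_i\varepsilon B_{\lambda_i}\subseteq V$, and while $\fh\bigl(\bigcap_i\varepsilon B_{\lambda_i}\bigr)\subseteq V$, this need not contain any $\bigcap_i\delta B_{\lambda_i,u}=\bigcap_i\fh(\delta B_{\lambda_i})$. Your suggested fix---pass to the directed family of finite maxima---repairs the \emph{argument} but alters the \emph{statement}: the conclusion would then concern $\{(\max_{i\in F}p_{\lambda_i})_u\}_F$ rather than the given $\{p_{\lambda,u}\}_\lambda$, and these are different families.

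In fact the theorem is false as stated. Take $E=\R^2$ with the ice-cream cone $E_+=\{(x,y):y\ge|x|\}$, which is closed and proper and hence normal for the Euclidean topology. The seminorms $p_1(x,y)=|x|$ and $p_2(x,y)=|y|$ generate that topology. Since $(a,|a|)\in E_+$ for every $a\in\R$, one has $d_1((x,y),E_+)=\inf_{a\in\R}|x-a|=0$, so $p_{1,u}\equiv 0$; a direct check gives $p_{2,u}=p_2$. Thus $\{p_{1,u},p_{2,u}\}$ generates only the topology of the second coordinate, strictly coarser than $\lintop$. The statement (and both proofs) become correct once the family $\{p_\lambda\}$ is assumed directed, i.e.\ closed under finite maxima.
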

\begin{proof}
	For $\lambda \in \Lambda$, let $B_\lambda^\circ$ and $B_{\lambda,u}^\circ$ denote the open unit balls of $p_\lambda$ and $p_{\lambda,u}$, respectively.
	
	Since $p_{\lambda,u} \leq p_\lambda$, it is clear that the topology $\lintop_u$ generated by $\{p_{\lambda,u}\}_{\lambda \in \Lambda}$ is coarser than $\lintop$.
	For the converse, let $\{V_\alpha\}_{\alpha \in A}$ be a neighbourhood base of $0$ consisting of full sets.
	For every $\alpha \in A$ there are $\lambda \in \Lambda$ and $\varepsilon > 0$ such that $\varepsilon B_\lambda^\circ \subseteq V_\alpha$, and therefore $\varepsilon B_{\lambda,u}^\circ = \fh(\varepsilon B_\lambda^\circ) \subseteq V_\alpha$ (since $V_\alpha$ is full).
	This shows that $\lintop_u$ is finer than $\lintop$.
\end{proof}

In particular, we recover the following well-known characterization of locally full seminorms, where condition \ref{itm:equiv:p_u} is new.

\begin{corollary}[{cf.~\cite[Proposition II.1.7]{Peressini}, \cite[Theorem 2.38]{Aliprantis-Tourky}}]
	\label{cor:equivalent-normalization}
	Let $(E,E_+,p)$ be a seminormed preordered vector space, and let $p_u$ be the $1$\nobreakdash-max-normalization of $p$.
	Then the following are equivalent:
	\begin{enumerate}[label=(\roman*)]
		\item\label{itm:equiv:p_u} $p$ and $p_u$ are equivalent seminorms;
		\item\label{itm:equiv:1-max-normal} $p$ is equivalent to a $1$\nobreakdash-max-normal seminorm;
		\item\label{itm:equiv:locally_full} $p$ is locally full;
		\item\label{itm:equiv:a-max-normal} $p$ is $\alpha$\nobreakdash-max-normal for some $\alpha \geq 1$.
	\end{enumerate}
\end{corollary}
\begin{proof}
	$\myref{itm:equiv:p_u} \Longrightarrow \myref{itm:equiv:1-max-normal}$. Trivial.
	
	$\myref{itm:equiv:1-max-normal} \Longrightarrow \myref{itm:equiv:locally_full}$. If $p$ is equivalent to a $1$\nobreakdash-max-normal seminorm $p'$, then the open balls of $p'$ form a neighbourhood base of $0$ consisting of full sets, so $p$ is locally full.
	
	$\myref{itm:equiv:locally_full} \Longrightarrow \myref{itm:equiv:p_u}$. If $p$ is locally full, then it follows from the proof\hair\footnote{For simplicity, we have stated \autoref{thm:lcs-normal} only for locally convex spaces, where the topology is assumed to be Hausdorff, so technically it does not apply to a seminormed space. However, the proof does not require this.} of \autoref{thm:lcs-normal} that $p$ and $p_u$ are equivalent.
	
	$\myref{itm:equiv:p_u} \Longleftrightarrow \myref{itm:equiv:a-max-normal}$. This follows from \autoref{prop:equivalent-normalization}.
\end{proof}

\section{Two worked examples: order unit spaces and \texorpdfstring{$C^*$}{C*}-algebras}
\label{sec:special-cases}

In this section, we show that the Archimedean order unitization $\unit{E}$ of an order unit space $E$ is just the direct sum of the Archimedeanization $\unitArch{E}$ with $\R$, and we show that the Archimedean order unitization of the self-adjoint part of a $C^*$\nobreakdash-algebra is order isomorphic with the self-adjoint part of the $C^*$\nobreakdash-algebra unitization.

\subsection{Order unit spaces}

If $(E,E_+,u)$ is an order unit space, then we have two ways of turning it into an AOU space: by the Archimedeanization of Paulsen and Tomforde (\autoref{thm:intro:Paulsen-Tomforde}), and by the Archimedean order unitization (\autoref{thm:intro:universal-property}).
We show that these two constructions give closely related results.

Recall from \cite[\S{}2.3]{Paulsen-Tomforde} that $\unitArch{E} = (E / N, \overline{E_+} + N , u + N)$, where $N = \overline{E_+} \cap -\overline{E_+}$ and $\overline{E_+} + N = \{x + N : x \in \overline{E_+}\}$.
If $(E,E_+,u)$ and $(F,F_+,w)$ are order unit spaces, then we understand $E \oplus F$ to be the order unit space $(E \oplus F , E_+ \oplus F_+ , u \oplus w)$.
We show that $\unit{E} \cong \unitArch{E} \oplus \R$ in this sense.
For this we need the following lemma.

\begin{lemma}
	\label{lem:order-unit-distance}
	Let $(E,E_+,u)$ be an order unit space equipped with the order unit seminorm.
	Then for all $x \in E$ and all $\lambda \in \R$, we have $d(x,E_+) \leq \lambda$ if and only if $\lambda \geq 0$ and $x + \lambda u \in \overline{E_+}$.
\end{lemma}
\begin{proof}
	Let $p$ denote the order unit seminorm.
	If $\lambda \geq 0$ and $x + \lambda u \in \overline{E_+}$, choose a sequence $\{y_n\}_{n=1}^\infty$ in $E_+$ such that $p(x + \lambda u - y_n) \leq \frac{1}{n}$ for all $n \in \N_1$.
	Then, $p(x - y_n) \leq p(\lambda u) + p(x + \lambda u - y_n) \leq \lambda + \frac{1}{n}$ for all $n \in \N_1$, which shows that $d(x,E_+)  \leq \lambda$.
	
	Conversely, if $d(x,E_+) \leq \lambda$, then it is immediate that $\lambda \geq 0$.
	Choose a sequence $\{z_n\}_{n=1}^\infty$ in $E_+$ such that $p(x - z_n) \leq \lambda + \frac{1}{n}$ for all $n \in \N_1$.
	Then we have $-(\lambda + \frac{2}{n})u \leq x - z_n \leq (\lambda + \frac{2}{n})u$, hence $0 \leq z_n \leq x + (\lambda + \frac{2}{n})u$.
	Since we have $p(x + \lambda u - (x + (\lambda + \frac{2}{n})u)) = p(-\frac{2}{n}u) \leq \frac{2}{n}$ for all $n \in \N_1$, it follows that $x + \lambda u \in \overline{E_+}$.
\end{proof}

\begin{corollary}
	If $(E,E_+,u)$ is an order unit space, then the map $\psi : \unit{E} \to \unitArch{E} \oplus \R$ given by $\psi(x + N,\lambda) = (x + \lambda u + N , \lambda)$ defines a unital order isomorphism.
\end{corollary}
\begin{proof}
	Since $\unit{E} = (E / N) \oplus \R = \unitArch{E} \oplus \R$, it is clear that $\psi$ is a well-defined linear isomorphism.
	By \autoref{def:minimal-unitization}, for all $x \in E$ and $\lambda \in \R$, we have $(x + N,\lambda) \in (\unit{E})_+$ if and only if $d(x,E_+) \leq \lambda$.
	On the other hand, we have $(x + \lambda u + N , \lambda) \in (\unitArch{E})_+ \oplus \R_{\geq 0}$ if and only if $x + \lambda u \in \overline{E_+}$ and $\lambda \geq 0$, which by \autoref{lem:order-unit-distance} is equivalent to $d(x,E_+) \leq \lambda$.
	This shows that $\psi$ is an order isomorphism.
	Moreover, since $\psi(0,1) = (u + N,1)$, we see that $\psi$ is unital.
\end{proof}

\subsection{The self-adjoint part of a \texorpdfstring{$C^*$}{C*}-algebra}

Let $\mathcal A$ be a $C^*$\nobreakdash-algebra, and let $\mathcal A^\sa$ be its self-adjoint part.
Equipped with the cone $\mathcal A_+$ of positive elements and the $C^*$\nobreakdash-algebra norm, $(\mathcal A^\sa , \mathcal A_+ , \lVert \:\cdot\: \rVert)$ becomes a normed ordered vector space.
Moreover, if $\mathcal A$ is unital, then $(\mathcal A^\sa, \mathcal A_+, 1)$ is an AOU space and the order unit norm on $\mathcal A^\sa$ agrees with the $C^*$\nobreakdash-algebra norm.

The $C^*$\nobreakdash-algebra unitization $\tilde{\mathcal A}$ of $\mathcal A$ is the $*$\nobreakdash-algebra on the vector space $\tilde{\mathcal A} = \mathcal A \oplus \C$ with multiplication given by $(a,\lambda)(b,\mu) = (ab + \lambda b + \mu a , \lambda\mu)$ and involution given by $(a,\lambda)^* = (a^*,\overline{\lambda})$.
This is a unital $*$\nobreakdash-algebra with unit $(0,1)$, it contains $\mathcal A$ as a subalgebra, and it admits a norm making it into a $C^*$\nobreakdash-algebra and extending the norm of $\mathcal A$ (see e.g.{} \cite[Theorem 2.1.6]{Murphy}).
The following result shows that the Archimedean order unitization of $\mathcal A^\sa$ coincides with the self-adjoint part of the $C^*$\nobreakdash-algebra unitization $\tilde{\mathcal A}$  (as AOU spaces).

\begin{proposition}
	\label{prop:C*-algebra}
	Let $\mathcal A$ be a $C^*$\nobreakdash-algebra, and let $\psi : \mathcal A^\sa \hookrightarrow \tilde{\mathcal A}^\sa$ be the inclusion of $\mathcal A^\sa$ in the $C^*$\nobreakdash-algebra unitization $\tilde{\mathcal A}$.
	Then the canonical map $\mUnit{\psi} : \unit{(\mathcal A^\sa)} \to \tilde{\mathcal A}^\sa$, $(a,\lambda) \mapsto (a,\lambda)$ is a unital order isomorphism.
\end{proposition}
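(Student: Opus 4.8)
The plan is to show that the map $\mUnit\psi$ is simultaneously an order isomorphism and an isometry onto $\tilde{\mathcal A}^\sa$, using the universal property of \autoref{thm:universal-property} to get the forward map and a concrete inverse to go back. First I would observe that $\psi : \mathcal A^\sa \to \tilde{\mathcal A}^\sa$, the inclusion of $\mathcal A^\sa$ into the $C^*$-algebra unitization, is a positive linear map (since $\mathcal A_+ = \mathcal A \cap \tilde{\mathcal A}_+$) and contractive, in fact isometric, because the $C^*$-norm on $\mathcal A$ agrees with the norm it inherits as a subalgebra of $\tilde{\mathcal A}$. Since $\tilde{\mathcal A}^\sa$ with unit $(0,1)$ is an AOU space (it is the self-adjoint part of a unital $C^*$-algebra), the universal property gives a unique unital positive linear map $\mUnit\psi : \unit{(\mathcal A^\sa)} \to \tilde{\mathcal A}^\sa$ with $\psi = \mUnit\psi \circ \phi$, and by the explicit description in the proof of \autoref{thm:universal-property} it is exactly $(x + N,\lambda) \mapsto \psi(x) + \lambda\cdot(0,1) = (x,\lambda)$. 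Here I also need $N = \{0\}$: the cone $\mathcal A_+$ is already norm-closed in $\mathcal A^\sa$, so $\overline{\mathcal A_+} = \mathcal A_+$, which is proper, hence $N = \lineal(\mathcal A_+) = \{0\}$ and $\unit{(\mathcal A^\sa)} = \mathcal A^\sa \oplus \R$ as a vector space.

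Next I would argue that $\mUnit\psi$ is a linear bijection onto $\tilde{\mathcal A}^\sa$: as a real vector space $\tilde{\mathcal A}^\sa = \{(x,\lambda) : x \in \mathcal A^\sa,\ \lambda \in \R\}$ (the self-adjoint elements of $\mathcal A \oplus \C$ are precisely those with self-adjoint $\mathcal A$-component and real scalar part), so the underlying map is the identity and is clearly bijective. It therefore remains to check that it is bipositive and isometric, i.e.\ that the cone $\tilde E_+$ from \autoref{def:minimal-unitization} coincides with $\tilde{\mathcal A}_+ \cap \tilde{\mathcal A}^\sa$, and that the order unit norm $\lVert\cdot\rVert_u$ agrees with the $C^*$-norm on $\tilde{\mathcal A}$. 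For the norm: I would invoke \autoref{prop:simple-formula}, which gives $\lVert (x,\lambda)\rVert_u = \max(d(x,\mathcal A_+) - \lambda,\ d(-x,\mathcal A_+) + \lambda)$, and then compute $d(x,\mathcal A_+)$ in a $C^*$-algebra. For a self-adjoint $x$, one has $d(x,\mathcal A_+) = \lVert x^-\rVert$ where $x^-$ is the negative part of $x$ in the functional calculus (the distance from $x$ to the positive cone is attained at $x^+$); equivalently, using the spectrum, $d(x,\mathcal A_+) = \max(0, -\min\sigma(x))$ and $d(-x,\mathcal A_+) = \max(0,\max\sigma(x))$. Plugging these in and comparing with the $C^*$-norm of $(x,\lambda)$ in $\tilde{\mathcal A}$, which (again by functional calculus applied to the self-adjoint element $(x,\lambda)$, or by noting $\sigma_{\tilde{\mathcal A}}(x,\lambda) \subseteq \sigma_{\mathcal A}(x) + \lambda$ together with the reverse inclusion up to $\{\lambda\}$) equals the spectral radius $\max(|\lambda + \max\sigma(x)|, |\lambda + \min\sigma(x)|)$ when $\mathcal A$ is non-unital — this matches the expression from \autoref{prop:simple-formula}.

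Once the norms are shown equal, bipositivity is almost immediate: both AOU spaces have closed unit ball equal to the order interval $[-u,u]$, so the positive cones (being determined by $u$ and the norm via $x \geq 0 \iff \lVert\,\lVert x\rVert u - x\,\rVert \leq \lVert x\rVert$, or more simply via the identification of $\tilde E_+$ with $\{y : \lVert \lVert y\rVert u - y\rVert_u \le \lVert y \rVert_u\}$ in any AOU space) coincide as well; alternatively I would just check directly that $(x,\lambda) \in \tilde E_+ \iff \lambda \ge d(x,\mathcal A_+) = \lVert x^-\rVert \iff \lambda \cdot 1 + x \in \tilde{\mathcal A}_+$, using the standard fact that a self-adjoint element $y$ of a unital $C^*$-algebra is positive iff $\lVert \lVert y\rVert 1 - y\rVert \le \lVert y\rVert$, or even more directly iff $\sigma(y) \subseteq [0,\infty)$. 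The main obstacle — and the only part requiring real $C^*$-algebra input rather than formal nonsense — is the computation of $d(x,\mathcal A_+)$ and of the $C^*$-norm on $\tilde{\mathcal A}^\sa$ via the continuous functional calculus, and in particular handling the non-unital case carefully (the spectrum of an element of a non-unital algebra always contains $0$, which is exactly why the ``$+\lambda$'' shift works out and why the hypothesis that $\mathcal A$ is non-unital is relevant); everything else is bookkeeping around the universal property and \autoref{prop:simple-formula}.
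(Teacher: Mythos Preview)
Your proposal is correct and follows essentially the same route as the paper: both arguments reduce bipositivity to the computation $d(x,\mathcal A_+)=\lVert x^-\rVert$ via the functional calculus, and then observe that $(x,\lambda)\in\tilde E_+\iff\lambda\ge\lVert x^-\rVert\iff x+\lambda\cdot 1\in\tilde{\mathcal A}_+$. The only difference is that you additionally verify the norms agree, which is not needed for the statement as written (a unital order isomorphism between AOU spaces is automatically isometric, since the order unit norm is determined by the order and the unit); the paper accordingly omits that step and goes straight to bipositivity.
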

\begin{proof}
	Since $\mathcal A_+$ is a closed proper cone, we have $N = \mathcal A_+ \cap -\mathcal A_+ = \{ 0 \}$, so $\mUnit{\psi}$ is a linear isomorphism.
	Clearly $\mUnit{\psi}$ is unital.
	We show that it is bipositive.
	
	By \autoref{lem:order-unit-distance}, for all $a \in \tilde{\mathcal A}^\sa$ and $\lambda \in \R$, we have $d(a,\tilde{\mathcal A}_+) \leq \lambda$ if and only if $\lambda \geq 0$ and $a + \lambda 1 \in \tilde{\mathcal A}_+$.
	By taking the Gelfand representation of the $C^*$\nobreakdash-subalgebra of $\tilde{\mathcal A}$ generated by $1$ and $a$, it is easy to see that the latter condition is equivalent to $\lambda \geq \lVert a^- \rVert$.
	It follows that $d(a,\tilde{\mathcal A}_+) = \lVert a^- \rVert$.
	
	Now let $a \in \mathcal A^\sa$.
	Then we have $d(a,\mathcal A_+) \geq d(\psi(a),\tilde{\mathcal A}_+) = \lVert \psi(a)^- \rVert = \lVert a^- \rVert$.
	Since $a^+ , a^- \in \mathcal A$ and $\lVert a - a^+ \rVert = \lVert a^- \rVert$, we have equality: $d(a,\mathcal A_+) = d(\psi(a), \tilde{\mathcal A}_+) = \lVert a^- \rVert$.
	
	Note that $(a,\lambda) \in \tilde{\mathcal A}_+$ implies that $\lambda \geq 0$, since the map $\tilde{\mathcal A} \to \C$, $(a,\lambda) \to \lambda$ is a $*$\nobreakdash-homomorphism and therefore positive.
	Therefore, we have $(a,\lambda) \in \tilde{\mathcal A}_+$ if and only if $\lambda \geq 0$ and $\psi(a) + \lambda 1 \in \tilde{\mathcal A}_+$, which by the above is equivalent to $d(a,\mathcal A_+) = d(\psi(a) , \tilde{\mathcal A}_+) \leq \lambda$.
	This shows that $\mUnit{\psi}$ is an order isomorphism.
\end{proof}

\section{Unitizations and representations}
\label{sec:representations}

In this section, we briefly discuss the role of the Archimedean order unitization in representation theory of ordered (topological) vector spaces, and we give an alternative construction of $\unit{E}$ in terms of representation theory.

By Kadison's representation theorem \cite{Kadison-representation}, every AOU space $(E,E_+,u)$ admits an isometric order embedding $\chi : E \to C(\Omega)$ for some compact Hausdorff space $\Omega$.
Hence, for every seminormed preordered vector space $(E,E_+,p)$, the composition of the canonical map $\phi : E \to \unit{E}$ and the Kadison representation $\chi : \unit{E} \to C(\Omega)$ yields a representation $E \to C(\Omega)$.
We show that this representation can also be obtained directly from $E$ (without passing through $\unit{E}$), using the following alternative construction of $\unit{E}$.

\begin{construction}
	\label{cstr:rep}
	Let $(E,E_+,p)$ be a seminormed preordered vector space.
	Then the set $\Omega := \{ \varphi \in E_+\topdual \, : \, \lVert \varphi \rVert \leq 1\}$ is weak\nobreakdash-$*$ compact, by the Banach--Alaoglu theorem.
	Let $\Omega_\weakstar$ denote the compact Hausdorff space thus obtained.
	For every $x \in E$, the evaluation function $\psi_x : \Omega_\weakstar \to \R$, $\varphi \mapsto \varphi(x)$ is continuous, so we have a well-defined linear function $\psi : E \to C(\Omega_\weakstar)$, $x \mapsto \psi_x$.
	Moreover, we have $\ker(\psi) = \bigcap_{\varphi \in E_+\topdual} \ker(\varphi) = \overline{E_+} \cap -\overline{E_+} = N$, so $\psi$ descends to a well-defined linear map $E/N \to C(\Omega_\weakstar)$, $x + N \mapsto \psi_x$.
	Define $\tilde\psi : \tilde E \to C(\Omega_\weakstar)$ by $\tilde\psi(x + N , \lambda) = \psi_x + \lambda 1$.
\end{construction}

\begin{proposition}
	\label{prop:C-Omega-construction}
	Let $\Omega_\weakstar$, $\psi$ and $\tilde\psi$ be as in \autoref{cstr:rep}.
	Then $\im(\psi) \cap \spn(1) = \{0\}$ and $\tilde\psi$ provides a unital order isomorphism $\tilde E \cong \im(\psi) \oplus \spn(1)$.
\end{proposition}
\begin{proof}
	Since $\Omega$ contains the zero functional $0 \in E_+\topdual$, for all $x \in E$ we have $\psi(x)(0) = 0$, which shows that $\im(\psi) \cap \spn(1) = \{0\}$.
	It follows that $\tilde\psi$ is injective and that $\im(\tilde\psi)$ is equal to the internal direct sum $\im(\psi) \oplus \spn(1)$.
	Clearly $\tilde\psi$ is unital, and it follows from \autoref{cor:alternative-formulas} that $\tilde\psi$ is bipositive, so we conclude that $\tilde\psi$ is a unital order isomorphism.
\end{proof}

It follows that the Archimedean order unitization $\unit{E}$ can also be constructed as $\im(\psi) \oplus \spn(1)$.
Moreover, since the state space of $\unit{E}$ is $\{(\varphi,1) \in \spn(E_+\topdual) \oplus \R \, : \, \varphi \in E_+\topdual, \lVert \varphi \rVert \leq 1\}$, it is not hard to see that the representation $\psi : E \to C(\Omega_\weakstar)$ from \autoref{cstr:rep} agrees with the composition of the canonical map $\phi : E \to \unit{E}$ and the Kadison representation $\chi : \unit{E} \to C(\Omega)$ on the full state space \cite[Lemma 2.5]{Kadison-representation}.%
	\hair\footnote{Kadison \cite[Theorem 2.1]{Kadison-representation} also gave a representation on $C(\overline{\ex(\Omega)}^{\,\weakstar})$, where $\overline{\ex(\Omega)}^{\,\weakstar}$ denotes the weak\nobreakdash-$*$ closure of the set of extreme points of $\Omega$. This is a restriction of the representation we considered above.}

We note that \autoref{prop:C-Omega-construction} also gives another proof of the following classical result.

\begin{corollary}[{\cite[Theorem~1']{Nachbin}, see also \cite[Theorem~4.2]{Riedl}}]
	\label{cor:isometric-representation}
	A normed ordered vector space $(E,E_+,\lVert \:\cdot\: \rVert)$ is isometrically order isomorphic to a subspace of $C(\Omega)$ for some compact Hausdorff space $\Omega$ if and only if $E_+$ is closed and $\lVert \:\cdot\: \rVert$ is $1$\nobreakdash-max-normal.
\end{corollary}
\begin{proof}
	If $E \subseteq C(\Omega)$, then $E$ is ordered by a closed cone and normed by a $1$\nobreakdash-max-normal norm.
	Conversely, suppose that $E_+$ is closed (and proper, because $E$ is ordered, not preordered) and that $\lVert \:\cdot\: \rVert$ is $1$\nobreakdash-max-normal.
	Then the canonical map $\phi : E \to \unit{E}$ is an isometric order embedding.
	Since \autoref{prop:C-Omega-construction} shows that $\unit{E} \cong \im(\psi) \oplus \spn(1) \subseteq C(\Omega_\weakstar)$, the result follows.
\end{proof}

\begin{remark}
	\label{rmk:representations}
	The preceding discussion shows that, in order to obtain a representation $E \to C(\Omega)$, it is not necessary to first adjoin an order unit and then apply the Kadison representation, because the same representation can already be obtained directly from $E$.
	In \cite{Dobben-semisimplicity}, we show that representations of arbitrary ordered topological vector spaces can be obtained in a similar way from collections of positive continuous linear functionals, without relying on any kind of normalization (with respect to some seminorm or order unit).
	This suggests that unitizations are not all that important in the representation theory of ordered vector spaces.
	This is quite surprising, given the similarity between Kadison's representation theorem and the Gelfand representation and the crucial role played by unitizations in the basic theory of operator algebras.
\end{remark}

\section{Applications to matrix ordered operator spaces}
\label{sec:operator-systems}

The Archimedean order unitization described in this paper is similar in spirit to the ``partial unitization'' of a matrix ordered operator space, introduced by Werner \cite{Werner} (see also \cite{Karn-unitization,Karn-corrigendum}, \cite[\S 3]{Blecher-et-al}).
In this section, we show that the Archimedean order unitization agrees with Werner's partial unitization at the ground level, and we use the results from the present paper to prove new results about Werner's partial unitization.

\subsection{Preliminaries on matrix ordered operator spaces}

We recall the basic definitions of operator spaces and matrix ordered $*$\nobreakdash-vector spaces, following Effros and Ruan \cite{Effros-Ruan-textbook} and Werner \cite{Werner}.

\begin{definition}[Matrix ordered operator spaces]
	\label{def:operator-spaces-systems}
	\leavevmode
	\begin{enumerate}[label=(\alph*)]
		\item For a complex vector space $E$, denote by $M_{m,n}(E)$ the space of $m \times n$ matrices with entries in $E$, and $M_n(E) = M_{n,n}(E)$.
		Given $x \in M_{p,q}(E)$ and scalar matrices $\alpha \in M_{m,p}(\C)$, $\beta \in M_{q,n}(\C)$, the product $\alpha x \beta$ is the matrix $\alpha x \beta = [ \sum_{k,l} \alpha_{ik} x_{kl} \beta_{lj} ] \in M_{m,n}(E)$.
		
		\item An \emph{abstract operator space} is a complex vector space $E$ together with a norm $\lVert \:\cdot\: \rVert_n$ on $M_n(E)$ for every $n \in \N_1$, such that
		\begin{enumerate}[label=(\textbf{M\arabic*}),leftmargin=*]
			\item\label{itm:operator-space-1} $\lVert x \oplus y \rVert_{m + n} = \max( \lVert x \rVert_m , \lVert y \rVert_n )$ for all $m,n \in \N_1$, $x \in M_m(E)$, $y \in M_n(E)$;
			\item\label{itm:operator-space-2} $\lVert \alpha x \beta \rVert_m \leq \lVert \alpha \rVert \lVert x \rVert_n \lVert \beta \rVert$ for all $m,n \in \N_1$, $\alpha \in M_{m,n}(\C)$, $x \in M_n(E)$, $\beta \in M_{n,m}(\C)$.
		\end{enumerate}
		
		\item Let $E,F$ be abstract operator spaces.
		A linear map $\varphi : E \to F$ naturally defines linear maps $\varphi_n : M_n(E) \to M_n(F)$ for all $n \in \N_1$, given by $[x_{ij}] \mapsto [\varphi(x_{ij})]$.
		The map $\varphi$ is \emph{completely bounded} if there is a constant $C > 0$ such that $\lVert \varphi_n(x) \rVert_n \leq C \lVert x \rVert_n$ for all $n \in \N_1$, $x \in M_n(E)$.
		A completely bounded linear map $\varphi : E \to F$ is called a \emph{complete isomorphism} if there is a constant $c > 0$ such that $\lVert \varphi_n(x) \rVert_n \geq c \lVert x \rVert_n$ for all $n \in \N_1$, $x \in M_n(E)$, and a \emph{complete isometry} if $\varphi_n$ is an isometry for all $n \in \N_1$.
		
		\item\label{itm:*-vs} A \emph{$*$\nobreakdash-vector space} is a complex vector space $E$ with a conjugate linear involution $x \mapsto x^*$.
		The set of self-adjoint elements of $E$ is denoted $E^\sa$.
		If $E$ is a $*$\nobreakdash-vector space and $n \in \N_1$, then we understand $M_n(E)$ to be a $*$\nobreakdash-vector space with involution $[x_{ij}]^* := [x_{ji}^*]$.
		
		For $*$\nobreakdash-vector spaces $E$ and $F$, there is a natural conjugate linear involution on the space $\Hom(E,F)$ of linear maps $E \to F$, given by $\overline{T}(x) := (T(x^*))^*$.
		We call this \emph{conjugation} and avoid the term ``adjoint'' in this context.%
			\hair\footnote{The adjoint of a linear map $T : E \to F$ is the map $T\topdual : F\topdual \to E\topdual$, $\varphi \mapsto \varphi \circ T$.
			Note furthermore that, according to the above definitions, the $*$\nobreakdash-operation on $M_n(\C)$ is $\alpha \mapsto \alpha^*$ (conjugate transpose), whereas the conjugation on $\Hom(\C^n,\C^n) \cong M_n(\C)$ is $\alpha \mapsto \overline{\alpha}$ (entry-wise conjugation).}
		A linear map $T : E \to F$ is self-conjugate if and only if $T[E^\sa] \subseteq F^\sa$.
		We write $\Hom(E,F)^\sc$ for the set of all self-conjugate linear maps.
		Every $\R$\nobreakdash-linear map $T : E^\sa \to F^\sa$ has a unique extension to a $\C$\nobreakdash-linear map $E \to F$, which is automatically self-conjugate, so we may identify $\Hom_\C(E,F)^\sc$ with $\Hom_\R(E^\sa , F^\sa)$.
		
		\item A \emph{matrix ordered vector space} is a $*$\nobreakdash-vector space $E$ with for every $n \in \N_1$ a convex cone $M_n(E)_+ \subseteq M_n(E)^\sa$ such that
		\begin{enumerate}[label=(\textbf{O\arabic*}),leftmargin=*]
			\item\label{itm:matrix-ordered-proper} $M_n(E)_+$ is proper for all $n \in \N_1$ (i.e.{} $M_n(E)_+ \cap -M_n(E)_+ = \{0\}$);
			\item\label{itm:matrix-ordered-2} $\alpha M_n(E)_+ \alpha^* \subseteq M_m(E)_+$ for all $m,n \in \N_1$, $\alpha \in M_{m,n}$.
		\end{enumerate}
		
		\item A \emph{matrix ordered AOU space} (or \emph{abstract unital operator system}) is a matrix ordered space $E$ with a designated element $u \in E$ (called the \emph{matrix order unit}) such that:
		\begin{enumerate}[label=(\textbf{UOS\arabic*}),leftmargin=*]
			\item $M_n(E)_+$ is Archimedean for all $n \in \N_1$;
			\item for all $n \in \N_1$, the diagonal matrix $u_n \in M_n(E)$ with all diagonal entries equal to $u$ is an order unit in $M_n(E)^\sa$.
		\end{enumerate}
		
		\item Let $E,F$ be matrix ordered spaces.
		A linear map $\varphi : E \to F$ is called \emph{completely positive} (resp.{} a \emph{complete order embedding}) if $\varphi_n$ is positive (resp.{} an order embedding) for all $n \in \N_1$.
		
		\item A \emph{matrix ordered operator space} is a space $E$ that is at the same time a matrix ordered vector space and an operator space.%
			\hair\footnote{Some authors additionally require that the involution is a complete isometry and the cones $M_n(E)_+$ are closed for all $n \in \N_1$, but we do not make these assumptions unless stated explicitly.}
		For the remainder of this paper, we will assume that all matrix ordered operator spaces are \emph{semisimple} (in the sense of \cite{Dobben-semisimplicity}), by which we mean that $\overline{M_n(E)_+} \cap -\overline{M_n(E)_+} = \{ 0 \}$ for all $n \in \N_1$ (it is sufficient to check this for $n = 1$).
		The results in this section also go through when $E$ is not semisimple, but then one has to quotient out the lineality space $N = \overline{M_n(E)_+} \cap -\overline{M_n(E)_+}$ of $\overline{M_n(E)_+}$, just like we did in the rest of this paper.
		We leave the details to the reader.
		
		\item The \emph{state space} of a matrix ordered operator space $E$ is the sequence $\{S_n(E)\}_{n=1}^\infty$ given by
		\begin{align*}
			S_n(E) &= \big\{ \varphi \in M_n(E)\topdual \, : \, \overline{\varphi} = \varphi , \ \varphi \geq 0 , \ \lVert \varphi \rVert = 1 \big\} \\
			&= \big\{ \varphi \in (M_n(E)^\sa)\topdual \, : \, \varphi \geq 0 , \ \lVert \varphi \rVert = 1 \big\},
		\end{align*}
		where we identify $(M_n(E)\topdual)^\sc$ with $(M_n(E)^\sa)\topdual$ as in \ref{itm:*-vs}.
	\end{enumerate}
\end{definition}

For a complex Hilbert space $\mathcal H$, the space $B(\mathcal H)$ becomes a matrix ordered operator space in a natural way, by identifying $M_n(B(\mathcal H)) \cong B(\mathcal H^n)$ and equipping this space with its $C^*$\nobreakdash-algebra norm and cone.
The Choi--Effros representation theorem \cite{Choi-Effros} shows that every matrix ordered AOU space is completely order isomorphic to a subspace of some $B(\mathcal H)$.
Similarly, Ruan's representation theorem \cite{Ruan} shows that every abstract operator space admits a complete isometry $E \to B(\mathcal H)$.

The Choi--Effros representation theorem was extended to non-unital matrix ordered operator spaces by Werner \cite[Theorem 4.15]{Werner}.
He showed that, if the cones $M_n(E)_+$ are closed and the $*$\nobreakdash-operation is a complete isometry, then $E$ admits a ``partial unitization'', which can then be used in conjunction with the Choi--Effros theorem to get a representation as a subspace of some $B(\mathcal H)$.
He proved that this representation is a complete isometry if and only if $\lVert \:\cdot\: \rVert$ agrees with the ``modified numerical radius'', but his definition of the latter was not very concrete yet.
Moreover, he announced a necessary and sufficient condition for this representation to be a complete isomorphism, but to our knowledge no proof exists in the literature.
Using the results from the present paper, we are able to address both of these issues.

Werner's construction goes as follows.

\begin{definition}[{Werner, \cite[Definition 4.7]{Werner}}]
	\label{def:partial-unitization}
	Let $E$ be a semisimple matrix ordered operator space.
	Then the \emph{partial unitization} of $E$ is the matrix ordered AOU space $\pu{E} = E \oplus \C$, where $(x,\alpha) \in M_n(E)^\sa \oplus M_n(\C)^\sa \cong M_n(\pu{E})^\sa$ is positive if and only if $\alpha \in M_n(\C)_+$ and
	\[  \varphi((\alpha + \varepsilon)^{-1/2} x (\alpha + \varepsilon)^{-1/2}) \geq -1 \qquad \text{for all $\varepsilon > 0$, $\varphi \in S_n(E)$} . \]
\end{definition}

Werner then showed that $\pu{E}$ is a matrix ordered AOU space that satisfies a universal property similar to that of \autoref{thm:intro:universal-property}, and that $E$ embeds in $\pu{E}$ via a split exact sequence
\[ \begin{tikzcd}[column sep=scriptsize]
	0 \arrow[r] & E \arrow[r, "\iota"] & \pu{E} \arrow[r,shift left=2pt, "\tau"] & \C \arrow[l,shift left=2pt, "\varepsilon"] \arrow[r] & 0,
\end{tikzcd} \]
where $\iota$ is a completely positive complete contraction and $\tau$ is unital.
In the presence of the universal property, this is just saying that $E$ embeds in $\pu{E}$ as a codimension $1$ subspace.%
	\hair\footnote{Let $\iota$ be the canonical map $E \to \unit{E}$, let $\tau$ be the extension of the zero map $E \to \C$, $x \mapsto 0$ to a unital completely positive linear map $\pu{E} \to \C$, and let $\varepsilon$ be the map $\lambda \mapsto \lambda u$, where $u$ is the unit of $\pu{E}$.}
This property was given more importance by Blecher, Kirkpatrick, Neal and Werner \cite{Blecher-et-al}, who showed that, apart from the ``smallest'' unitization given by Werner, under certain conditions there also exists a ``largest'' codimension $1$ unitization, which can be different.

\subsection{A simplification of Werner's partial unitization}
\label{subsec:pu-simplification}

Werner's condition of positivity in \autoref{def:partial-unitization} is a bit awkward to work with, for two reasons.
First, it relies on the state space, which suggests that one has to compute the state space before being able to tell which elements are positive in the unitization.
Second, multiplication on both sides by $(\alpha + \varepsilon)^{-1/2}$ for arbitrary $\varepsilon > 0$ makes the matrix computations rather messy.
We address both of these issues here by giving an equivalent geometric description of $M_n(\pu{E})_+$ analogous to our ``internal'' description of the Archimedean order unitization in \autoref{def:minimal-unitization}.

It follows from \autoref{cor:alternative-formulas} that Werner's partial unitization (\autoref{def:partial-unitization}) agrees at the ground level ($n = 1$) with our construction of the Archimedean order unitization (\autoref{def:minimal-unitization}).
Inspired by this, we proceed to use the results from this paper in order to show that Werner's partial unitization admits various equivalent geometric descriptions similar to \autoref{def:minimal-unitization}.
Some of these furthermore eliminate the need to multiply on both sides by matrices of the form $(\alpha + \varepsilon)^{\pm 1/2}$ for all $\varepsilon > 0$; see \myautoref{thm:pu-simplification}{itm:pu:alpha-ball} and \ref{itm:pu:inv}.

In what follows, let $d(S,T) = \inf_{s \in S , t \in T} \lVert s - t \rVert_n$ denote the distance between two sets $S,T \subseteq M_n(E)^\sa$, and let $B_n^\sa$ denote the closed unit ball of $M_n(E)^\sa$.

\begin{theorem}
	\label{thm:pu-simplification}
	Let $E$ be a semisimple matrix ordered operator space, and let $\pu{E}$ be its partial unitization.
	For all $n \in \N_1$ and all $(x,\alpha) \in M_n(E)^\sa \oplus M_n(\C)^\sa \cong M_n(\pu{E})^\sa$, the following are equivalent:
	\begin{enumerate}[label=(\roman*),series=Werner-simplification]
		\item\label{itm:pu:positive} $(x,\alpha) \in M_n(\pu{E})_+$;
		\item\label{itm:pu:alpha-epsilon-inv-dist} $\alpha \geq 0$ and $d\big( (\alpha + \varepsilon)^{-1/2} x (\alpha + \varepsilon)^{-1/2} \, , \, M_n(E)_+ \big) \leq 1$ for all $\varepsilon > 0$;
		\item\label{itm:pu:alpha-epsilon-ball} $\alpha \geq 0$ and $\big(x + (\alpha + \varepsilon)^{1/2} B_n^\sa (\alpha + \varepsilon)^{1/2} \big) \cap M_n(E)_+ \neq \varnothing$ for all $\varepsilon > 0$;
		\item\label{itm:pu:alpha-ball} $\alpha \geq 0$ and $d\big(x + \alpha^{1/2} B_n^\sa \alpha^{1/2} \, , \, M_n(E)_+ \big) = 0$.
	\end{enumerate}
	If $\alpha$ is invertible, then this is furthermore equivalent to:
	\begin{enumerate}[resume*=Werner-simplification]
		\item\label{itm:pu:inv} $\alpha \geq 0$ and $\inf \big\{ \big\lVert \alpha^{-1/2} (x - w) \alpha^{-1/2} \big\rVert_n \, : \, w \in M_n(E)_+ \big\} \leq 1$.
	\end{enumerate}
\end{theorem}
\pagebreak
\begin{proof}
	$\myref{itm:pu:positive} \Longleftrightarrow \myref{itm:pu:alpha-epsilon-inv-dist}$.
	\autoref{cor:alternative-formulas} shows that for all $y \in M_n(E)^\sa$, we have $\varphi(y) \geq -1$ for all $\varphi \in S_n(E)$ if and only if $d(y , M_n(E)_+) \leq 1$, so it follows immediately that $\myref{itm:pu:positive}$ and $\myref{itm:pu:alpha-epsilon-inv-dist}$ are equivalent.
	
	$\myref{itm:pu:alpha-epsilon-inv-dist} \Longrightarrow \myref{itm:pu:alpha-epsilon-ball}$.
	Assume that $\alpha \geq 0$ and $d\big( (\alpha + \varepsilon)^{-1/2} x (\alpha + \varepsilon)^{-1/2} \, , \, M_n(E)_+ \big) \leq 1$ for all $\varepsilon > 0$.
	Let $\varepsilon > 0$ be given, and choose some $\delta$ such that $0 < \delta < \varepsilon$.
	Then the eigenvalues of the self-adjoint matrix $(\alpha + \varepsilon)^{-1/2} (\alpha + \delta)^{1/2}$ are of the form $\sqrt{\frac{\lambda_i + \delta}{\lambda_i + \varepsilon}} < 1$ for some eigenvalue $\lambda_i \geq 0$ of $\alpha$, so we have $\eta := \lVert (\alpha + \varepsilon)^{-1/2} (\alpha + \delta)^{1/2} \rVert < 1$.
	Therefore $\frac{1}{\eta^2} > 1$, so by assumption we may choose $w \in M_n(E)_+$ and $y \in B_n^\sa$ such that
	\[  (\alpha + \delta)^{-1/2} x (\alpha + \delta)^{-1/2}  -  w  =  \tfrac{1}{\eta^2} \, y. \]
	Write $z := -\tfrac{1}{\eta^2} (\alpha + \varepsilon)^{-1/2} (\alpha + \delta)^{1/2} y (\alpha + \delta)^{1/2} (\alpha + \varepsilon)^{-1/2}$.
	Then we have
	\begin{gather*}
		\lVert z \rVert_n  \leq  \frac{1}{\eta^2} \big\lVert (\alpha + \varepsilon)^{-1/2} (\alpha + \delta)^{1/2} \big\rVert^2 \: \lVert y \rVert_n  =  \lVert y \rVert_n \leq 1 , \\
		(\alpha + \delta)^{1/2} w (\alpha + \delta)^{1/2}  =  x  -  \tfrac{1}{\eta^2} (\alpha + \delta)^{1/2} y (\alpha + \delta)^{1/2}  =  x  +  (\alpha + \varepsilon)^{1/2} z (\alpha + \varepsilon)^{1/2} ,
	\end{gather*}
	which shows that $\big(x + (\alpha + \varepsilon)^{1/2} B_n^\sa (\alpha + \varepsilon)^{1/2} \big) \cap M_n(E)_+ \neq \varnothing$.
	
	$\myref{itm:pu:alpha-epsilon-ball} \Longrightarrow \myref{itm:pu:alpha-ball}$.
	Assume that $\alpha \geq 0$ and $\big(x + (\alpha + \varepsilon)^{1/2} B_n^\sa (\alpha + \varepsilon)^{1/2} \big) \cap M_n(E)_+ \neq \varnothing$ for all $\varepsilon > 0$.
	Let $\delta > 0$ be given.
	Note that, by \ref{itm:operator-space-2} and the triangle inequality, we have
	\[ \lVert \beta z \beta - \gamma z \gamma \rVert_n   \leq   \lVert \beta \rVert \lVert z \rVert_n \lVert \beta - \gamma \rVert  +  \lVert \beta - \gamma \rVert \lVert z \rVert_n \lVert \gamma \rVert \qquad \text{($z \in M_n(E)$, $\beta,\gamma \in M_n(\C)$)}. \]
	Thus, since $\lim_{\varepsilon \to 0^+} (\alpha + \varepsilon)^{1/2} = \alpha^{1/2}$, we may choose some small enough $\varepsilon > 0$ such that
	\begin{align}
		\big\lVert (\alpha + \varepsilon)^{1/2} z (\alpha + \varepsilon)^{1/2}  -  \alpha^{1/2} z \alpha^{1/2} \big\rVert_n  \leq  \delta \lVert z \rVert_n \qquad \text{for all $z \in M_n(E)$}. \label{eqn:pu:epsilon-delta}
	\end{align}
	By assumption, we may choose $y \in B_n^\sa$ and $w \in M_n(E)_+$ such that $x + (\alpha + \varepsilon)^{1/2} y (\alpha + \varepsilon)^{1/2}  =  w$.
	Now it follows from \eqref{eqn:pu:epsilon-delta} that
	\begin{align*}
		\big\lVert x  +  \alpha^{1/2} y \alpha^{1/2}  -  w \big\rVert_n  &=  \big\lVert \alpha^{1/2} y \alpha^{1/2}  -  (\alpha + \varepsilon)^{1/2} y (\alpha + \varepsilon)^{1/2} \big\rVert_n  \leq  \delta \lVert y \rVert_n \leq \delta.
	\end{align*}
	This shows that $d(x + \alpha^{1/2} B_n^\sa \alpha^{1/2} , M_n(E)_+ ) \leq \delta$.
	Letting $\delta \to 0$ proves the result.
	
	$\myref{itm:pu:alpha-ball} \Longrightarrow \myref{itm:pu:alpha-epsilon-inv-dist}$.
	Assume that $\alpha \geq 0$ and $d\big(x + \alpha^{1/2} B_n^\sa \alpha^{1/2} \, , \, M_n(E)_+ \big) = 0$.
	Let $\varepsilon > 0$ be given, and write $\alpha_\varepsilon := \alpha + \varepsilon$.
	Since the eigenvalues of the self-adjoint matrix $\alpha_\varepsilon^{-1/2} \alpha^{1/2}$ are of the form $\frac{\sqrt{\lambda_i}}{\sqrt{\lambda_i + \varepsilon}} < 1$ for some eigenvalue $\lambda_i \geq 0$ of $\alpha$, we have $\lVert \alpha_\varepsilon^{-1/2} \alpha^{1/2} \rVert < 1$.
	Moreover, we have $\lVert \alpha_\varepsilon^{-1/2} \rVert \leq \frac{1}{\sqrt{\varepsilon}}$.
	
	By assumption, for any $\delta > 0$ we may choose $y \in M_n(E)^\sa$ with $\lVert y \rVert_n \leq 1$  and $w \in M_n(E)_+$ such that $\lVert x + \alpha^{1/2} y \alpha^{1/2}  -  w \rVert_n \leq \delta\varepsilon$.
	It follows that
	\begin{align*}
		\big\lVert \alpha_\varepsilon^{-1/2} (x - w) \alpha_\varepsilon^{-1/2} \big\rVert_n  &\leq  \big\lVert \alpha_\varepsilon^{-1/2} \alpha^{1/2} y \alpha^{1/2} \alpha_\varepsilon^{-1/2} \big\rVert_n  +  \big\lVert \alpha_\varepsilon^{-1/2} ( x + \alpha^{1/2} y \alpha^{1/2} - w ) \alpha_\varepsilon^{-1/2} \big\rVert_n \\
		&\leq  \big\lVert \alpha_\varepsilon^{-1/2} \alpha^{1/2} \big\rVert^2  \lVert y \rVert_n   +   \big\lVert \alpha_\varepsilon^{-1/2} \big\rVert^2  \big\lVert x + \alpha^{1/2} y \alpha^{1/2} - w \big\rVert_n \\
		&\leq  1  +  \delta.
	\end{align*}
	Since $\alpha_\varepsilon^{-1/2} w \alpha_\varepsilon^{-1/2} \in M_n(E)_+$, it follows that $d( \alpha_\varepsilon^{-1/2} x \alpha_\varepsilon^{-1/2}  ,  M_n(E)_+) \leq 1 + \delta$.
	Letting $\delta \to 0$ shows that $d( \alpha_\varepsilon^{-1/2} x \alpha_\varepsilon^{-1/2}  ,  M_n(E)_+) \leq 1$.
	
	$\myref{itm:pu:alpha-ball} \Longleftrightarrow \myref{itm:pu:inv}$, $\alpha$ invertible.
	Let $\delta > 0$.
	On the one hand, if $d(x + \alpha^{1/2} B_n^\sa \alpha^{1/2}  , M_n(E)_+ ) \leq \delta$, then we may choose $y \in B_n^\sa$ and $w \in M_n(E)_+$ such that $\lVert x + \alpha^{1/2} y \alpha^{1/2} - w \rVert \leq 2\delta$, hence
	\begin{align*}
		\big\lVert \alpha^{-1/2} (x - w) \alpha^{-1/2} \big\rVert_n  &\leq  \lVert y \rVert_n  +  \big\lVert \alpha^{-1/2} \big( x + \alpha^{1/2} y \alpha^{1/2} - w \big) \alpha^{-1/2} \big\rVert_n  \leq  1 + 2\delta \lVert \alpha^{-1} \rVert.
	\end{align*}
	On the other hand, if $\inf \big\{ \big\lVert \alpha^{-1/2} (x - w) \alpha^{-1/2} \big\rVert_n \, : \, w \in M_n(E)_+ \big\} \leq 1 + \delta$, then we may choose $y \in B_n^\sa$ and $w \in M_n(E)_+$ such that $\alpha^{-1/2} (x - w) \alpha^{-1/2} = -(1 + 2\delta) y$, hence
	\begin{align*}
		\big\lVert x + \alpha^{1/2} y \alpha^{1/2}  -  w \big\rVert_n  &=  2\delta \big\lVert \alpha^{1/2} y \alpha^{1/2} \big\rVert_n  \leq  2\delta \lVert \alpha \rVert.
	\end{align*}
	Letting $\delta \to 0$ proves that \myref{itm:pu:alpha-ball} and \myref{itm:pu:inv} are equivalent when $\alpha$ is invertible.
\end{proof}

\begin{remark}
	The formula in \myautoref{thm:pu-simplification}{itm:pu:alpha-epsilon-ball}, except with the closed unit ball replaced by the open unit ball, was also considered as the definition of a unitization by Karn in the proof of \cite[Theorem 2.8]{Karn-unitization}, but this was retracted and replaced by a more complicated formula in the corrigendum \cite[Theorem 0.1]{Karn-corrigendum}.
	It was shown in \cite[\S{}3]{Blecher-et-al} that Karn's unitization coincides with Werner's partial unitization, so the simpler formulas from \myautoref{thm:pu-simplification}{itm:pu:alpha-ball} and \ref{itm:pu:inv} also apply to Karn's unitization.
\end{remark}

\begin{remark}
	We point out that the criterion in \myautoref{thm:pu-simplification}{itm:pu:alpha-ball} does not imply that the sets $x + \alpha^{1/2} B_n^\sa \alpha^{1/2}$ and $M_n(E)_+$ have non-empty intersection, even if $M_n(E)_+$ is closed.
	This can already fail at the ground level ($n = 1$).
	For example, if $E_+$ is a closed convex cone that is not proximinal (see for instance \autoref{xmpl:proximinal}), then there exists a vector $x \in E^\sa$ such that $d(x,E_+) = 1$ but the infimum is not attained.
	Consequently, one has $d(x + B_1^\sa ,  E_+) = 0$ but the sets $x + B_1^\sa$ and $E_+$ are disjoint.
	
	If $E$ is finite\nobreakdash-dimensional, then $B_n^\sa$ is compact, so a straightforward compactness argument shows that the infimum $d(x + \alpha^{1/2} B_n^\sa \alpha^{1/2} , \overline{M_n(E)_+} )$ is attained.
	Since we have $d(S,T) = d(\overline{S},\overline{T})$, the following simplification of \autoref{thm:pu-simplification} in the finite\nobreakdash-dimensional case is immediate.
\end{remark}

\begin{corollary}
	\label{cor:pu-finite-dimensional}
	Let $E$ be a finite\nobreakdash-dimensional semisimple matrix ordered operator space, and let $\pu{E}$ be its partial unitization.
	Then for all $n \in \N_1$ and all $(x,\alpha) \in M_n(\pu{E})^\sa$, we have $(x,\alpha) \in M_n(\pu{E})_+$ if and only if
	$\alpha \geq 0$ and $\big(x + \alpha^{1/2} B_n^\sa \alpha^{1/2} \big) \cap \overline{M_n(E)_+} \neq \varnothing$.
\end{corollary}

The description of the positive cone given in \autoref{thm:pu-simplification} also gives us the following direct formula for the order unit norm on the self-adjoint elements, generalizing \autoref{prop:simple-formula} to matrix ordered operator spaces.

\begin{corollary}
	\label{cor:pu-norm-formula}
	Let $E$ be a semisimple matrix ordered operator space, and let $\pu{E}$ be its partial unitization.
	Then for all $n \in \N_1$, the order unit norm on $M_n(\pu{E})^\sa$ is given by
	\begin{align*}
		\lVert (x,\alpha) \rVert_{n,u}  &=  \inf \left\{ \lambda \geq \lVert \alpha \rVert \, \left| \ \begin{aligned}
			&\big(x + (\alpha + \lambda)^{1/2} B_n^\sa (\alpha + \lambda)^{1/2} \big) \cap M_n(E)_+ \neq \varnothing, \\
			&\big( {} {-x} + (-\alpha + \lambda)^{1/2} B_n^\sa (-\alpha + \lambda)^{1/2} \big) \cap M_n(E)_+ \neq \varnothing
		\end{aligned} \right.\right\} \\\noalign{\smallskip}
		&=  \inf \left\{ \lambda \geq \lVert \alpha \rVert \, \left| \, \exists y,z \in B_n^\sa \ \text{such that}\ \ \begin{aligned}
			&(\alpha + \lambda)^{1/2} y (\alpha + \lambda)^{1/2} \geq -x , \\
			&(-\alpha + \lambda)^{1/2}  z (-\alpha + \lambda)^{1/2} \geq x
		\end{aligned} \right. \right\} .
	\end{align*}
	Moreover, when $\alpha = \xi I_n$ for some $\xi \in \R$, one has
	\[ \lVert (x,\xi I_n) \rVert_{n,u}  =  \max\big( \hair d(x, M_n(E)_+) - \xi \, , \, d(-x, M_n(E)_+) + \xi \hair \big). \]
\end{corollary}
\begin{proof}
	By \myautoref{thm:pu-simplification}{itm:pu:alpha-epsilon-ball}, for all $\lambda \in \R_{\geq 0}$ we have $(0,-\lambda I_n) \leq (x,\alpha) \leq (0,\lambda I_n)$ if and only if $-\lambda I_n \leq \alpha \leq \lambda I_n$ and
	\begin{alignat*}{2}
		&\big(   x  + ( \alpha + \lambda + \varepsilon)^{1/2} B_n^\sa ( \alpha + \lambda + \varepsilon)^{1/2} \big)  \cap  M_n(E)_+  \neq \varnothing  & \qquad & \text{for all $\varepsilon > 0$}; \\
		&\big( {-x} + (-\alpha + \lambda + \varepsilon)^{1/2} B_n^\sa (-\alpha + \lambda + \varepsilon)^{1/2} \big)  \cap  M_n(E)_+  \neq \varnothing  & \qquad & \text{for all $\varepsilon > 0$}.
	\end{alignat*}
	Since we have $-\lambda I_n \leq \alpha \leq \lambda I_n$ if and only if $\lambda \geq \lVert \alpha \rVert$, and since possibly removing the smallest element of an upwards closed set does not affect the infimum, the first formula for $\lVert (x,\alpha) \rVert_{n,u}$ follows.
	Rewriting these conditions on $\lambda$, the second formula follows immediately.
	
	To prove the formula for $\lVert (x , \xi I_n) \rVert_{n,u}$, note that by \myautoref{thm:pu-simplification}{itm:pu:inv} for all $\lambda \in \R$ we have $(x,\lambda I_n) \in M_n(\pu{E})_+$ if and only if $d(x, M_n(E)_+) \leq \lambda$.
	The formula now follows by the argument of \autoref{prop:simple-formula}.
\end{proof}

\subsection{Criteria for the embedding \texorpdfstring{$E \to \pu{E}$}{E -> E\textasciicircum{}pu} to be a complete isomorphism}
\label{subsec:non-unital-operator-systems}

In \cite[Remark 4.14(iii)]{Werner}, Werner announced the following result:

\begin{claim}[{\cite[Remark 4.14(iii)]{Werner}}]
	\label{claim:non-unital-operator-systems}
	Let $E$ be a matrix ordered operator space such that the $*$\nobreakdash-operation is a complete isometry and $M_n(E)_+$ is closed for all $n \in \N_1$.
	Then the embedding $\iota : E \to \pu{E}$ is a complete isomorphism \textup(that is, $\lVert \iota^{-1} \rVert_{cb} < \infty$\textup) if and only if there is a constant $\kappa > 0$ such that $\lVert \:\cdot\: \rVert_n$ is $\kappa$\nobreakdash-max-normal for all $n \in \N_1$.
\end{claim}

To our knowledge, no proof of this claim can be found in the literature.
In this section, we use the results from the present paper to prove this claim.
Furthermore, we provide the precise relation between the constants $\lVert \iota^{-1} \rVert_{cb}$ and $\kappa$, which was not stated in Werner's paper.
For this we borrow the following notation from Werner's paper.

\begin{definition}[{\cite[\S{}3]{Werner}}]
	\label{def:modified-numerical-radius}
	Let $E$ be a semisimple matrix ordered operator space, and let $x \in M_n(E)$.
	The \emph{numerical radius} of $x$ is defined as
	\[ \nu_E^0(x) := \max \left\{ |\varphi(x)| \: : \: \varphi \in S_n(E) \right\}. \]
	The \emph{modified numerical radius} of $x$ is defined as
	\[ \nu_E(x) := \nu_E^0 \begin{pmatrix} 0 & x \\ x^* & 0 \end{pmatrix}. \]
\end{definition}

It follows from \autoref{cor:alternative-formulas} that $\nu_E^0$ agrees on the self-adjoint elements with the $1$\nobreakdash-max-normalization $\lVert \:\cdot\: \rVert_{n,u}$ of $\lVert \:\cdot\: \rVert_n$.
Moreover, in \cite[Lemma 4.8(c)]{Werner}, Werner showed that $\nu_E$ is equal to the norm inherited from the partial unitization, which by the final formula in \autoref{cor:pu-norm-formula} (with $\xi = 0$) also agrees on the self-adjoint elements with $\lVert \:\cdot\: \rVert_{n,u}$.
In summary, we have
\begin{align}
	\nu_E(x) = \nu_E^0(x) = \lVert x \rVert_{n,u} = \max \big( d(x, M_n(E)_+) , d(-x, M_n(E)_+) \big) \qquad \text{($x \in M_n(E)^\sa$)}. \label{eqn:radius-equals-modified-radius}
\end{align}
Using this, we can prove the following precise version of \autoref{claim:non-unital-operator-systems}.

\begin{proposition}[{cf.~\cite[Remark~4.14(iii)]{Werner}}]
	\label{prop:non-unital-operator-systems}
	Let $E$ be a semisimple matrix ordered operator space such that the $*$\nobreakdash-operation is a complete isometry.
	Then for all $\kappa \geq 1$, the following are equivalent:
	\begin{enumerate}[label=(\roman*)]
		\item For all $n \in \N_1$ and all $x \in M_n(E)$, one has $\nu_E(x) \geq \frac{1}{\kappa} \lVert x \rVert_n$;
		\item For all $n \in \N_1$, the norm $\lVert \:\cdot\: \rVert_n$ is $\kappa$\nobreakdash-max-normal on $M_n(E)^\sa$.
	\end{enumerate}
\end{proposition}
\begin{proof}
	By \autoref{prop:equivalent-normalization} and \myeqautoref{eqn:radius-equals-modified-radius}, the norm $\lVert \:\cdot\: \rVert_n$ is $\kappa$\nobreakdash-max-normal on $M_n(E)^\sa$ if and only if $\frac{1}{\kappa} \lVert x \rVert_n \leq \lVert x \rVert_{n,u} = \nu_E(x) = \nu_E^0(x)$ for all $x \in M_n(E)^\sa$.
	It remains to show that, if this holds for all $n \in \N_1$, then we also have $\nu_E(x) \geq \frac{1}{\kappa} \lVert x \rVert_n$ for all $n \in \N_1$ and all $x \in M_n(E)$ (not just the self-adjoint elements).
	To that end, note that for all $x \in M_n(E)$ we have
	\begin{align*}
		\lVert x \rVert_n = \max(\lVert x \rVert_n , \lVert x^* \rVert_n) = \left\lVert \begin{pmatrix} x & 0\\ 0 & x^* \end{pmatrix} \right\rVert_{2n} = \left\lVert \begin{pmatrix} 0 & x \\ x^* & 0 \end{pmatrix} \right\rVert_{2n},
	\end{align*}
	where the first equality follows because we assumed that the $*$\nobreakdash-operation is a complete isometry,
	the second equality follows from \ref{itm:operator-space-1},
	and the third equality follows because the norm is invariant under (left or right) multiplication by unitary scalar matrices.
	Thus, if $\lVert \:\cdot\: \rVert_{2n}$ is $\kappa$\nobreakdash-max normal on the self-adjoint elements, then we have
	\[ \frac{1}{\kappa} \lVert x \rVert_n = \frac{1}{\kappa} \left\lVert \begin{pmatrix} 0 & x \\ x^* & 0 \end{pmatrix} \right\rVert_{2n} \leq \nu_E^0 \begin{pmatrix} 0 & x \\ x^* & 0 \end{pmatrix} = \nu_E(x) \qquad \text{for all $x \in M_n(E)$}, \]
	since $\left( \begin{smallmatrix} 0 & x \\ x^* & 0 \end{smallmatrix} \right)$ is self-adjoint.
\end{proof}

Combining this with \cite[Theorem 4.15]{Werner}, we find the following characterization of ``isomorphic abstract non-unital operator systems''.

\begin{corollary}
	\label{cor:isomorphic-operator-system}
	Let $E$ be a matrix ordered operator space, and let $\kappa \geq 1$.
	Then there exists a Hilbert space $\mathcal H$, a self-adjoint subspace $F \subseteq B(\mathcal H)$ and a self-adjoint completely isomorphic complete order isomorphism $T : E \to F$ with $\lVert T \rVert_{cb} \leq 1$, $\lVert T^{-1} \rVert_{cb} \leq \kappa$ if and only if $E$ satisfies the following properties:
	\begin{enumerate}[label=(\roman*)]
		\item the cones $M_n(E)_+$ are closed;
		
		\item the $*$\nobreakdash-operation is a complete isometry;
		
		\item for all $n \in \N_1$, the norm $\lVert \:\cdot\: \rVert_n$ is $\kappa$\nobreakdash-max-normal on $M_n(E)^\sa$.
	\end{enumerate}
\end{corollary}

\pagebreak
In particular, for $\kappa = 1$, we get the following simple axiomatic characterization of ``abstract non-unital operator systems'', which is the operator space analogue of \autoref{cor:isometric-representation}.

\begin{corollary}[{cf.~\cite[Theorem~1.1]{Russell}}]
	\label{cor:isometric-operator-system}
	A matrix ordered operator space $E$ admits a completely isometric complete order isomorphism to a self-adjoint subspace of $B(\mathcal H)$ for some Hilbert space $\mathcal H$ if and only if $E$ satisfies the following properties:
	\begin{enumerate}[label=(\textbf{OS\arabic*}),leftmargin=*]
		\item the cones $M_n(E)_+$ are closed;
		
		\item the $*$\nobreakdash-operation is a complete isometry;
		
		\item for all $n \in \N_1$, the norm $\lVert \:\cdot\: \rVert_n$ is $1$\nobreakdash-max-normal on $M_n(E)^\sa$.
	\end{enumerate}
\end{corollary}

\begin{remark}
	In \mysecref{sec:representations}, we saw that a representation $E \to C(\Omega)$ of a normed ordered vector space $E$ can be obtained directly from $E$, without passing through the unitization (see \autoref{rmk:representations}).
	The same phenomenon can be observed for matrix ordered operator spaces.
	Indeed, Karn \cite[Theorem~1.8]{Karn-embedding} gave a direct representation from an abstract non-unital operator system to a $C^*$\nobreakdash-algebra, without passing through the partial unitization.
	Using the results from the present paper, it is easy to see that Karn's conditions in \cite[Theorem~1.8]{Karn-embedding} are equivalent to the conditions in \autoref{cor:isometric-operator-system}.
\end{remark}

\small

\bibliographystyle{my-alphaurl}
\bibliography{orderunits}

\end{document}